\numberwithin{equation}{section}
\numberwithin{figure}{section}
\theoremstyle{plain}
\newtheorem{thm}{\protect\theoremname}[section]
  \theoremstyle{remark}
  \newtheorem{rem}[thm]{\protect\remarkname}
  \theoremstyle{plain}
  \newtheorem{lem}[thm]{\protect\lemmaname}
  \theoremstyle{plain}
  \newtheorem{cor}[thm]{\protect\corollaryname}
  \theoremstyle{plain}
  \newtheorem{prop}[thm]{\protect\propositionname}
  \theoremstyle{definition}
  \newtheorem{example}[thm]{\protect\examplename}
  \providecommand{\corollaryname}{Corollary}
  \providecommand{\examplename}{Example}
  \providecommand{\lemmaname}{Lemma}
  \providecommand{\propositionname}{Proposition}
  \providecommand{\remarkname}{Remark}
\providecommand{\theoremname}{Theorem}
\begin{document}
\begin{singlespace}

\title[Intersections of Cantor Sets]{On Intersections of Cantor Sets: Self-Similarity}
\end{singlespace}

\author{Steen Pedersen and Jason D. Phillips}

\address{Department of Mathematics, Wright State University, Dayton OH 45435.}

\email{steen@math.wright.edu}

\email{phillips.50@wright.edu}

\keywords{Cantor set, fractal, self-similarity, translation, intersection,
Hausdorff measure, Hausdorff dimension.}

\subjclass[2000]{28A80, 51F99}
\begin{abstract}
Let \emph{C} be a Cantor set. For a real number \emph{t} let \emph{C+t}
be the translate of \emph{C} by \emph{t}, We say two real numbers
\emph{s},\emph{t} are equivalent if\emph{ }the intersection of \emph{C}
and \emph{C+s} is a translate of the intersection of \emph{C} and
\emph{C+t}. We consider a class of Cantor sets determined by similarities
with one fixed positive contraction ratio. For this class of Cantor
set, we show that an ``initial segment'' of the intersection of
\emph{C} and \emph{C+t} is a self-similar set with contraction ratios
that are powers of the contraction ratio used to describe \emph{C}
as a self-similar set if and only if \emph{t} is equivalent to a rational
number. Our results are new even for the middle thirds Cantor set. 
\end{abstract}
\maketitle
\tableofcontents{}

\section{\label{Sec-1:Introduction}Introduction}

Let $n\ge3$ be an integer. Any real number $t\in\left[0,1\right]$
has at least one $n$\emph{-ary} \emph{representation} 
\[
t=0._{n}t_{1}t_{2}\cdots=\sum_{k=1}^{\infty}\frac{t_{k}}{n^{k}}
\]
where each $t_{k}$ is one of the digits $0,1,\ldots,n-1.$ Deleting
some elements from the full digit set $\{0,1,\ldots n-1\}$ we get
a set of \emph{digits} $D:=\left\{ d_{k}\mid k=1,2,\ldots,m\right\} $
with $d_{k}<d_{k+1}$ for all $k=1,2,\ldots,m-1.$ Assuming $2\leq m<n$
we get a corresponding \emph{deleted digits Cantor set} 
\begin{equation}
C=C_{n,D}:=\left\{ \sum_{k=1}^{\infty}\frac{x_{k}}{n^{k}}\,\Big|\, x_{k}\in D\text{ for all }k\in\mathbb{N}\right\} .\label{Sec-1-eq:C-defined}
\end{equation}

We say that $D$ is \emph{uniform,} if $d_{k+1}-d_{k},$ $k=1,2,\ldots,m-1$
is constant and $\geq2.$ We say $D$ is \emph{regular,} if $D$ is
a subset of a uniform digit set. Finally, we say that $D$ is \emph{sparse,}
if $\left|\delta-\delta'\right|\geq2$ for all $\delta\neq\delta'$
in the set of differences 
\[
\Delta:=D-D=\left\{ d_{j}-d_{k}\mid d_{j},d_{k}\in D\right\} .
\]
Clearly, a uniform set is regular and a regular set is sparse. The
set $D=\left\{ 0,5,7\right\} $ is sparse and not regular. We will
abuse the terminology and say $C_{n,D}$ is uniform, regular, or sparse
provided $D$ has the corresponding property. The middle thirds Cantor
set is obtained by setting $n=3$ and $D=\{0,2\}.$ In particular,
the middle thirds Cantor set is a uniform set. 

In this paper we investigate self-similarity properties of the intersections
$C\cap\left(C+t\right)$ of $C$ with its translates $C+t:=\{x+t\mid x\in C\},$
for sparse Cantor sets $C$. Using a geometric approach, we investigate
the class of real numbers $t\in\left[0,1\right]$ for which the intersection
$C\cap\left(C+t\right)$ can be expressed as the finite, disjoint
union of self-similar sets. Since the problem is invariant under translation,
we will assume $d_{1}=0.$ 

Compared to previous studies, e.g., \cite{DHW08}, \cite{LYZ11},
\cite{KLD12}, and \cite{ZLL08}, of self-similarity properties of
$C\cap\left(C+t\right)$ we allow a greater class of digits sets,
sparse sets as compared to uniform sets and we study self-similarity
of a subset of $C\cap\left(C+t\right)$ instead of instead of self-similarity
of all of $C\cap\left(C+t\right)$.

\subsection{Statement of results}

Fix a real number $t$. If $C\cap\left(C+t\right)$ is non-empty,
let 
\[
C\left(t\right):=\left(C\cap\left(C+t\right)\right)-\inf\left(C\cap\left(C+t\right)\right),
\]
otherwise, let $C(t)$ be the empty set. We say that two real numbers
$s$ and $t$ are \emph{translation equivalent}, if $C(s)=C(t).$
Clearly, $s$ and $t$ are translation equivalent if and only if $C\cap\left(C+s\right)$
is a translate of $C\cap\left(C+t\right)$. We show that a real number
$t$ is translation equivalent to a rational if and only if some initial
segment of $C(t)$ has a self-similarity property. More precisely,
we show: 
\begin{thm}
\label{Sec-1-Thm:Rational_tau}Let $D$ be sparse and $x\in\left[0,1\right]$
such that $C\left(x\right)$ is not empty. Then $x$ is translation
equivalent to a  rational $t\in\left[0,1\right]$ if and only if there
exists $\varepsilon>0,$ such that $C(x)\cap\left[0,\varepsilon\right]$
is a self-similar set generated by a finite set of similarities $f_{j}(y)=r_{j}y+b_{j},$
where $r_{j}=n^{-q_{j}}$ for some $q_{j}\in\mathbb{N}$. 
\end{thm}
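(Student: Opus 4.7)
The plan is to transfer the geometric problem into symbolic dynamics on digit sequences. Since $D$ is sparse, distinct elements of $\Delta = D - D$ are at least $2$ apart, so every $t \in [-1,1]$ admits at most one representation $t = \sum_{k \ge 1} \delta_k/n^k$ with $\delta_k \in \Delta$; call such $t$ \emph{admissible}. For admissible $t$, the intersection $C \cap (C+t)$ equals $\{\sum_k x_k/n^k : x_k \in D_k\}$, where $D_k := D \cap (D + \delta_k)$, and is nonempty precisely when every $D_k$ is nonempty. Translation equivalence $s \sim t$ can then be read off symbolically: after stripping the leading run of positions where $|D_k|=1$ (which contributes only to $\inf(C \cap (C+t))$), two admissible numbers are equivalent iff their truncated sequences $(D_k)$ coincide, and $C(x)$ is parameterized by this tail.

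For the forward direction, if $x$ is translation equivalent to a rational $t \in [0,1]$, the base-$n$ expansion of $t$ is eventually periodic; sparsity makes the translation between base-$n$ digits and $\Delta$-digits local and injective on admissible sequences, so $(\delta_k)$, and hence $(D_k)$, is eventually periodic, say with period $p$ from index $K$ onward. Setting $\varepsilon = n^{-(K-1)}$ and observing that $C(x) \cap [0,\varepsilon]$ corresponds to freezing positions $1,\dots,K-1$ at their minima, this initial segment becomes the attractor of an IFS $\{f_j(y) = n^{-p}y + b_j\}$ whose $b_j$ range over the admissible digit blocks of length $p$ starting at index $K$. Sparsity of $\Delta$ supplies the separation needed to identify this union as a genuine self-similar set.

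Conversely, assume $C(x) \cap [0,\varepsilon]$ is the attractor of an IFS $\{f_j(y) = n^{-q_j} y + b_j\}$; one may take $\varepsilon = n^{-M}$. Each $f_j$ amounts to prepending a specific length-$q_j$ block of allowed digits before an arbitrary element of the attractor, so every admissible tail $(D_k)_{k > M}$ must decompose as a concatenation of blocks drawn from a finite set. Combining this block-substitution structure with uniqueness of digit expansions (again via sparsity) and an alignment argument at a common scale such as $q := \mathrm{lcm}(q_j)$ forces $(D_k)_{k > M}$ to be purely periodic. Reversing the forward dictionary then produces a rational $t \in [0,1]$ translation equivalent to $x$.

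The main technical obstacle is the backward direction when the contraction ratios $n^{-q_j}$ are not all equal: a priori the decomposition into blocks provides only a substitution-like invariance of $(D_k)_{k>M}$ rather than pure periodicity, and upgrading it to genuine eventual periodicity is delicate. This is where the sparsity hypothesis does its work, through the rigid unique-decoding property it confers on digit sequences, which prevents two incompatible block factorizations of the same point.
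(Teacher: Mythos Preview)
Your symbolic reduction is correct and matches the paper: for $\alpha$ with a $\Delta^{+}$ representation one has $C(\alpha)=\bigl\{\sum_{k}x_{k}n^{-k}:x_{k}\in D_{\alpha_{k}}\bigr\}$, and translation equivalence is controlled by the sequence $(D_{\alpha_{k}})$. The forward direction of your sketch is essentially right, though the step ``rational $t\Rightarrow$ eventually periodic $\Delta$-sequence'' is not as automatic as an appeal to locality suggests; the paper secures it by first passing to a translation-equivalent $y=\psi(t)$ with $\sigma_{y}\equiv 1$ and using that $\sigma_{t}$ has period $p$ or $2p$ to check $y$ is again rational, after which the structure theorem (the paper's Theorem~\ref{Sec-1-Thm:Structure-when-t-is-rational}) gives the desired self-similar initial segment.

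The real gap is in the converse, which you yourself label the ``main technical obstacle'' and leave unresolved. Your proposed block-concatenation and $\mathrm{lcm}$ alignment is both unnecessary and, as stated, not obviously sufficient: a set invariant under a finite family of prepend-a-block maps need not have a periodic digit sequence without further argument. The paper's proof avoids all of this with a single observation you are missing. Since $0=\min\bigl(C(\alpha)\cap[0,\varepsilon]\bigr)$ and the attractor equals $\bigcup_{j}f_{j}(T)$, the smallest $b_{j}$ must be $0$; relabel so that $b_{1}=0$ and $f_{1}(y)=n^{-q_{1}}y$ is a \emph{pure scaling}. Now pick $k$ with $n^{-k}\le\varepsilon$. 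For any $j>k$ and any $d\in D_{\alpha_{j}}$ the point $d\,n^{-j}$ lies in $T$, hence so does $f_{1}(d\,n^{-j})=d\,n^{-(j+q_{1})}$; uniqueness of the $\Delta^{+}$ expansion then forces $d\in D_{\alpha_{j+q_{1}}}$. Thus $D_{\alpha_{j}}\subseteq D_{\alpha_{j+q_{1}}}$ for all $j>k$. Because every $D_{\alpha_{j}}$ is a subset of the fixed finite set $\{0,1,\dots,d_{m}\}$, each chain $D_{\alpha_{j}}\subseteq D_{\alpha_{j+q_{1}}}\subseteq D_{\alpha_{j+2q_{1}}}\subseteq\cdots$ stabilizes, so $(D_{\alpha_{j}})$ is eventually periodic and a rational representative exists (this is the paper's Lemma~\ref{Sec-4-Lem:Subset_Equivalence}). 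Only the single map $f_{1}$ is used; the remaining similarities and any common-scale alignment play no role whatsoever.
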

Theorem \ref{Sec-1-Thm:Rational_tau} only requires that the segment
of $C\left(x\right)$ in a neighborhood surrounding zero is self-similar.
However, if $x$ is translation equivalent to a rational, then the
intersection $C(x)\cap[\varepsilon,1]$ cannot be arbitrary, see Theorem
\ref{Sec-1-Thm:Structure-when-t-is-rational}. 

Let $\Delta^{+}:=\Delta\cap\left[0,\infty\right)$ and let $F\left(\Delta^{+}\right)$
be the set of $\alpha$ in the interval $\left[0,1\right]$ such that
\[
\alpha=\sum_{k=1}^{\infty}\alpha_{k}n^{-k},\text{ for some }\alpha_{k}\in\Delta^{+}.
\]
Then $F\left(\Delta^{+}\right)$ is a subset of the set $F^{+}$ of
all $t\in[0,1]$ such that $C\cap\left(C+t\right)$ is non-empty,
and any $t$ in $F^{+}$ is translation equivalent to some $\alpha$
in $F\left(\Delta^{+}\right),$ see Section \ref{Sec-4:Unions-of-Self-Similar}. 

Let $\delta$ be an integer. If $D\cap\left(D+\delta\right)$ is nonempty,
let
\[
D_{\delta}:=D\cap\left(D+\delta\right)-\min\left(D\cap\left(D+\delta\right)\right),
\]
otherwise, let $D_{\delta}$ be the empty set. It follows from Lemma
\ref{Sec-4-Lem:Subset_Equivalence}, that $\alpha$ in $F\left(\Delta^{+}\right)$
is translation equivalent to a rational if and only if there are integers
$k\geq0$ and $q>0,$ such that 
\begin{equation}
D_{\alpha_{j}}\subseteq D_{\alpha_{j}+q}\text{ for all }j>k.\label{Sec-1-eq:equivalent-to-a-rational}
\end{equation}
We say $\alpha$ is \emph{strongly periodic} if there are sets $\widetilde{D}_{\alpha_{j}}$
and $q>0,$ such that 
\[
D_{\alpha_{j}}+\widetilde{D}_{\alpha_{j}}=D_{\alpha_{j+q}}\text{ for all }j>0.
\]
Note this implies (\ref{Sec-1-eq:equivalent-to-a-rational}). We show
in Section \ref{Sub-6.1:Uniform and strongly periodic rationals}
that our notion of strong periodicity is consistent with the one in
\cite{DHW08}, \cite{LYZ11}, \cite{KLD12}, and \cite{ZLL08}, when
$D$ is uniform. 
\begin{thm}
\label{Sec-1-Thm:C(t) self-similar iff }Let $D$ be sparse and $\alpha=0._{n}\alpha_{1}\alpha_{2}\ldots$
be an element in $F\left(\Delta^{+}\right)$. Then $\alpha$ is strongly
periodic if and only if $C\left(\alpha\right)$ is a self-similar
set generated by a finite set of similarities $f_{j}(x)=n^{-q}x+b_{j},$
where $q\in\mathbb{N}$.
\end{thm}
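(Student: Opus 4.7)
The plan is to work with the Minkowski-sum expression
\[
C(\alpha) \;=\; \sum_{k=1}^\infty n^{-k} D_{\alpha_k}
\]
and its period-$q$ decomposition $C(\alpha) = B_q + n^{-q} C^{(q)}(\alpha)$, where $B_q := \sum_{k=1}^q n^{-k} D_{\alpha_k}$ is finite and $C^{(q)}(\alpha) := \sum_{k=1}^\infty n^{-k} D_{\alpha_{k+q}}$. A self-similarity of the form $C(\alpha) = \bigcup_j f_j(C(\alpha))$ with $f_j(x) = n^{-q}x + b_j$ is equivalent to the single Minkowski identity $C(\alpha) = A + n^{-q} C(\alpha)$ for the finite translate set $A = \{b_j\}$, so the theorem reduces to showing that such an $A$ exists if and only if $\alpha$ is strongly periodic.

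\textbf{Forward direction.} Suppose $\alpha$ is strongly periodic with period $q$ and witnesses $\widetilde{D}_{\alpha_j}$. The remark after the definition yields $D_{\alpha_j} \subseteq D_{\alpha_{j+q}}$, forcing $0 \in \widetilde{D}_{\alpha_j}$ for each $j$, so the nested chain $D_{\alpha_j} \subseteq D_{\alpha_{j+q}} \subseteq D_{\alpha_{j+2q}} \subseteq \cdots$ inside the finite set $\Delta^+$ must stabilize. Once $D_{\alpha_{j+Nq}} = D_{\alpha_{j+(N+1)q}}$, the identity $D_{\alpha_{j+(N+1)q}} = D_{\alpha_{j+Nq}} + \widetilde{D}_{\alpha_{j+Nq}}$ between equal-cardinality finite sets forces $\widetilde{D}_{\alpha_{j+Nq}} = \{0\}$ (only the zero translation preserves a finite subset of $\mathbb{R}$). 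Hence only finitely many $\widetilde{D}_{\alpha_j}$ are nontrivial, and $\widetilde{C}(\alpha) := \sum_k n^{-k} \widetilde{D}_{\alpha_k}$ is a \emph{finite} Minkowski sum of finite sets, hence finite. Applying strong periodicity termwise and using commutativity of the Minkowski sum gives $C^{(q)}(\alpha) = C(\alpha) + \widetilde{C}(\alpha)$, and substituting into the period-$q$ decomposition produces
\[
C(\alpha) \;=\; B_q + n^{-q}\bigl(C(\alpha) + \widetilde{C}(\alpha)\bigr) \;=\; \bigl(B_q + n^{-q}\widetilde{C}(\alpha)\bigr) + n^{-q} C(\alpha),
\]
so $C(\alpha) = A + n^{-q} C(\alpha)$ with the finite set $A := B_q + n^{-q}\widetilde{C}(\alpha)$, which is exactly the required IFS.

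\textbf{Reverse direction and main obstacle.} Assume $C(\alpha) = B + n^{-q} C(\alpha)$ for a finite set $B$. Because the self-similarity covers \emph{all} of $C(\alpha)$ and not merely an initial segment, Theorem \ref{Sec-1-Thm:Rational_tau} together with the characterisation (\ref{Sec-1-eq:equivalent-to-a-rational}) can be invoked with $k=0$, giving the inclusion $D_{\alpha_j} \subseteq D_{\alpha_{j+q}}$ for every $j \geq 1$. The substantive work is upgrading this inclusion to the Minkowski-sum equality required by strong periodicity. I would compare the IFS identity with the canonical period-$q$ decomposition to obtain $B + n^{-q} C(\alpha) = B_q + n^{-q} C^{(q)}(\alpha)$, scale by $n^q$, and read off the resulting equality of Minkowski sums one base-$n$ position at a time. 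Sparseness of $D$ is essential here: it forces strong separation for the IFS, uniqueness of the $D_{\alpha_\cdot}$-expansion of every element of $C(\alpha)$, and the absence of carries when elements of $B$ are added to elements of $n^{-q}C(\alpha)$, so the global Minkowski identity can be projected coordinatewise to manufacture witness sets $\widetilde{D}_{\alpha_k}$ satisfying $D_{\alpha_{k+q}} = D_{\alpha_k} + \widetilde{D}_{\alpha_k}$. This coordinatewise disentangling is the principal obstacle: a generic Minkowski-sum identity yields only per-coordinate inclusions, and without sparseness the algebraic rigidity witnessing strong periodicity is lost.
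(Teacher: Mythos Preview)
Your forward direction is correct and is essentially the paper's argument recast in Minkowski-sum language: both proofs observe that $D_{\alpha_j}\subseteq D_{\alpha_{j+q}}$ forces the chain to stabilise, so the ``correction'' sets $\widetilde D_{\alpha_j}$ are eventually $\{0\}$, and then build the IFS from the first block of digits together with the finitely many nontrivial $\widetilde D_{\alpha_j}$'s. Your packaging $C(\alpha)=A+n^{-q}C(\alpha)$ with $A=B_q+n^{-q}\widetilde C(\alpha)$ is a clean way to say exactly what the paper obtains via the calculation borrowed from \cite{LYZ11}.

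The reverse direction, however, is not a proof but a plan, and you say so yourself (``this coordinatewise disentangling is the principal obstacle''). Getting $D_{\alpha_j}\subseteq D_{\alpha_{j+q}}$ from Theorem~\ref{Sec-1-Thm:Rational_tau} with $k=0$ is fine, but you then need to produce the witness sets $\widetilde D_{\alpha_k}$ and prove the \emph{equality} $D_{\alpha_k}+\widetilde D_{\alpha_k}=D_{\alpha_{k+q}}$, and this is precisely the step you leave undone. The paper closes this gap with a short, concrete construction that you are missing: since $0\in C(\alpha)$, each $b_j=f_j(0)$ lies in $C(\alpha)$ and hence has a \emph{unique} digit expansion $b_j=\sum_k b_{j,k}n^{-k}$ with $b_{j,k}\in D_{\alpha_k}$ (uniqueness is Lemma~\ref{Sec-4-Lem:Unique Delta Representations}, using sparseness). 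One then \emph{defines} $\widetilde D_{\alpha_k}:=\{b_{j,k+q}:j=1,\dots,L\}$. For any $x=\sum_k x_k n^{-k}\in C(\alpha)$ one has
\[
f_j(x)=\sum_{k=1}^{q}b_{j,k}n^{-k}+\sum_{k\ge 1}(b_{j,k+q}+x_k)\,n^{-(k+q)},
\]
and sparseness forces this to be the unique $D_{\alpha_\cdot}$-expansion of $f_j(x)\in C(\alpha)$ (no carries, no alternative representation), so $b_{j,k+q}+x_k\in D_{\alpha_{k+q}}$, giving $\widetilde D_{\alpha_k}+D_{\alpha_k}\subseteq D_{\alpha_{k+q}}$. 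Finally, a strict inclusion at any $k$ would make $\bigcup_j f_j(C(\alpha))$ a proper subset of $C(\alpha)$ by (\ref{Sec-4-Eq:C_alpha defined}), contradicting self-similarity. This last covering argument is the piece that upgrades inclusion to equality, and it is exactly what your sketch does not supply.
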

If $D$ is uniform, this was established in \cite{DHW08}, \cite{LYZ11}
when $d_{m}=n-1.$ After we completed this manuscript we received
the preprint \cite{Kon12}, this preprint contains a generalization
of Theorem \ref{Sec-1-Thm:C(t) self-similar iff }, see Remark \ref{Sec-4-Remark:-periodin-not-strongly periodic}.

One part of Theorem \ref{Sec-1-Thm:Rational_tau} is a consequence
of a structure theorem for $C\cap\left(C+x\right),$ when $x$ is
rational. This structure is summarized in the following result. 
\begin{thm}
\label{Sec-1-Thm:Structure-when-t-is-rational}Let $D$ be sparse
and $t\in\left[0,1\right]$ such that $C\cap\left(C+t\right)$ is
not empty. If $t=0._{n}t_{1}\cdots t_{k-p}\overline{t_{k-p+1}\cdots t_{k}}$
for some period $p$ and integer $k\ge p$, then there exists a sparse
digits set $E=\left\{ 0\le e_{1}<e_{2}<\cdots<e_{r}<n^{2p}\right\} $
and corresponding deleted digits Cantor set $C_{n^{2p},E}$ such that
$C\left(t\right)$ consists of a finite number of translates of $\frac{1}{n^{k}}C_{n^{2p},E}$,
the translates of $\frac{1}{n^{k}}C_{n^{2p},E}$ are disjoint, in
fact, the translates of the convex hull of $\frac{1}{n^{k}}C_{n^{2p},E}$
are disjoint. 
\end{thm}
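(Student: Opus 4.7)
The plan is to reduce to the case $t\in F\left(\Delta^{+}\right)$ via translation equivalence, derive a digit-by-digit description of $C\cap\left(C+t\right)$ using sparsity, and exploit the eventual periodicity of the digits to identify $C\left(t\right)$ as a finite union of translates of a self-similar Cantor tail. By the discussion in Section \ref{Sec-4:Unions-of-Self-Similar}, any $t\in F^{+}$ is translation equivalent to some $\alpha\in F\left(\Delta^{+}\right)$; since translation equivalence preserves $C\left(t\right)$ and the reduction preserves the eventually periodic form of the $n$-ary expansion, we may assume $t\in F\left(\Delta^{+}\right)$ with digits $t_{j}\in\Delta^{+}$ satisfying $t_{j+p}=t_{j}$ for $j\geq k-p+1$.

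The key structural step is the digit-wise identity
\[
C\cap\left(C+t\right)=\left\{ \sum_{j=1}^{\infty}x_{j}n^{-j}\,:\,x_{j}\in D\cap\left(D+t_{j}\right)\text{ for all }j\right\} .
\]
The inclusion $\supseteq$ is immediate. For the reverse, if $x=\sum x_{j}n^{-j}\in C$ and $x-t=\sum y_{j}n^{-j}\in C$, then $\sum\left(x_{j}-y_{j}\right)n^{-j}=\sum t_{j}n^{-j}$ is a $\Delta$-valued expansion of $t$. The sparse-digits uniqueness lemma (if $\sum\delta_{j}n^{-j}=0$ with $\delta_{j}\in\Delta$ then each $\delta_{j}=0$, obtained by isolating the first nonzero term, using $\left|\delta_{j_{0}}\right|\geq2$, and bounding the geometric tail by $1/n^{j_{0}}$) forces $x_{j}-y_{j}=t_{j}$. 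Writing $D\cap\left(D+t_{j}\right)=m_{j}+D_{t_{j}}$ with $m_{j}=\min\left(D\cap\left(D+t_{j}\right)\right)$, the infimum of $C\cap\left(C+t\right)$ is $\sum m_{j}n^{-j}$, and subtracting gives
\[
C\left(t\right)=\left\{ \sum_{j=1}^{\infty}\tilde{x}_{j}n^{-j}\,:\,\tilde{x}_{j}\in D_{t_{j}}\right\} .
\]

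Next, I split the sum at level $k$. Setting $A:=\{\sum_{j=1}^{k}\tilde{x}_{j}n^{-j}\,:\,\tilde{x}_{j}\in D_{t_{j}}\}$ and $T:=\{\sum_{j=1}^{\infty}\tilde{x}_{j}n^{-j}\,:\,\tilde{x}_{j}\in D_{t_{k+j}}\}$, I obtain the Minkowski decomposition $C\left(t\right)=A+n^{-k}T$. The periodicity of $t_{j}$ for $j>k-p$ makes the digit-set sequence $(D_{t_{k+j}})_{j\geq1}$ periodic of period $p$, so grouping $2p$ consecutive base-$n$ digits into one base-$n^{2p}$ digit gives $T=C_{n^{2p},E}$, where
\[
E=\left\{ \sum_{j=1}^{2p}\tilde{x}_{j}n^{2p-j}\,:\,\tilde{x}_{j}\in D_{t_{k+j}}\right\} .
\]
Sparsity of $E$ in base $n^{2p}$ follows from sparsity of $\Delta$ via the analogous geometric-series estimate: every element of $E-E$ has a $\Delta$-coded expansion $\sum_{j=1}^{2p}\delta_{j}n^{2p-j}$ with $\delta_{j}\in\Delta$, and two such codings of distinct elements differ first at some position $j_{0}$ with $\left|\delta_{j_{0}}-\delta_{j_{0}}'\right|\geq2$, which forces the separation $\geq2n^{2p-j_{0}}-2\left(n-1\right)\left(n^{2p-j_{0}}-1\right)/\left(n-1\right)=2$.

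Finally, distinct $a,a'\in A$ must differ at some first coordinate $j_{0}\leq k$ with $|\tilde{x}_{j_{0}}-\tilde{x}'_{j_{0}}|\geq2$, giving $\left|a-a'\right|\geq n^{-j_{0}}+n^{-k}$ by the same tail bound. Since the convex hull of $n^{-k}C_{n^{2p},E}$ has diameter at most $n^{-k}$, the translates $a+n^{-k}C_{n^{2p},E}$ for $a\in A$ have pairwise disjoint convex hulls, completing the proof. The main technical obstacle is the sparse-digits uniqueness lemma, which must be invoked at three points (to identify $x_{j}-y_{j}=t_{j}$, to establish sparsity of $E$, and to separate elements of $A$); each application requires that the sparsity-$2$ gap in $\Delta$ survive the geometric tail $\sum_{j>j_{0}}\left(n-1\right)n^{-j}$, leaving a margin of exactly $n^{-j_{0}}$ in each case. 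A secondary subtlety, which I have glossed over, is verifying that the translation-equivalence reduction preserves the eventually periodic form with the stated period $p$; this works because sparsity renders $\Delta^{+}$-expansions unique, so the reduction effectively alters only a finite prefix.
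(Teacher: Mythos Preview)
Your post-reduction argument is correct and is in fact a cleaner route than the paper's: once you have $\alpha\in F(\Delta^+)$ with $\alpha_{j+2p}=\alpha_j$ for $j>k$, the digit-wise identity of Lemma~\ref{Sec-4-Lem:Classify C-cap-(C+a) using digits} gives the Minkowski decomposition $C(\alpha)=A+n^{-k}T$ immediately, and the sparsity/separation estimates follow as you write. The paper instead reaches the same structure via the geometric Lemma~\ref{Sec-3-Lem:Contained_(t-t_k)} and then an IFS fixed-point argument for the periodic tail; your symbolic approach bypasses both.

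The genuine gap is the reduction step. Your justification---``sparsity renders $\Delta^+$-expansions unique, so the reduction effectively alters only a finite prefix''---is not correct and does not address the real issue. The given $t$ comes with an $n$-ary expansion with digits in $\{0,\dots,n-1\}$, not a $\Delta$-expansion; you first need \emph{some} $\Delta$-expansion of $t$ (or of a translation-equivalent number) before Corollary~\ref{Sec-4-Cor:Positive Delta representations.} applies, and there is no a~priori reason that expansion should be eventually periodic with period dividing $2p$ and pre-period at most $k$. Uniqueness of $\Delta^+$-expansions says nothing here, since $t$ itself need not lie in $F(\Delta^+)$. The paper closes exactly this gap with Lemmas~\ref{Sec-3-Lem:Sigma_period_q} and~\ref{Sec-3-Lem:Constructing y Equiv t With Sigma_y=00003D1}: it shows $\sigma_t$ has period $q\in\{p,2p\}$, constructs the translation-equivalent $y=\psi(t)$ with $\sigma_y\equiv 1$, and verifies from~\eqref{Sec-3-Eq:Psi defined} that $y_j$ depends only on $(t_j,\sigma_t(j-1),\sigma_t(j))$, hence is periodic of period $q$ for $j>k$. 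The condition $\sigma_y\equiv 1$ forces every $y_j\in\Delta^+$, so this $y$ is precisely the $\alpha$ you want---but producing it requires that recursive construction, not merely an appeal to uniqueness.
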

Let $\mathrm{dim_{H}}\left(C\cap\left(C+t\right)\right)$ denote the
Hausdorff measure of $C\cap\left(C+t\right).$ We showed in \cite{PePh11b}
that there are uncountably many $t$ such that the $\mathrm{dim_{H}}\left(C\cap\left(C+t\right)\right)$-dimensional
Hausdorff measure of $C\cap\left(C+t\right)$ is zero or infinity.
For such $t,$ the set $C\cap(C+t)$ is not a finite union of translates
of a self-similar set. In particular, not all real numbers are translation
equivalent to a rational number. We provide a method for constructing
real numbers which are not translation equivalent to any rational,
and thus are not a finite, disjoint union of self-similar sets. In
particular, we show that if $D$ is uniform and $t\in C_{n,D}$ is
irrational, then $t$ is not translation equivalent to any rational.

The structure of uniform deleted digits Cantor sets allows us to prove
the following special case of Theorem \ref{Sec-1-Thm:Rational_tau}:
\begin{thm}
\label{Sec-1-Thm:Uniform union of self-similar sets}Let $D$ be uniform
and $x\in\left[0,1\right]$ such that $C\cap\left(C+x\right)$ is
not empty. There exists a rational $t\in\left[0,1\right]$ such that
$C(x)=C(t)$ if and only if $C\left(x\right)$ is the finite, disjoint
union of self-similar sets.
\end{thm}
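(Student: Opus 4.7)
The forward implication is essentially a restatement of Theorem~\ref{Sec-1-Thm:Structure-when-t-is-rational}. If $C(x)=C(t)$ for some rational $t\in[0,1]$, that theorem expresses $C(t)$ as a finite disjoint union of translates of $\frac{1}{n^{k}}C_{n^{2p},E}$; each such translate is a translate of a deleted-digits Cantor set and hence self-similar, so $C(x)$ is a finite disjoint union of self-similar sets. Only sparseness of $D$ is needed for this half.

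For the converse I would apply Theorem~\ref{Sec-1-Thm:Rational_tau} to the component of $C(x)$ meeting $0$. Write $C(x)=\bigsqcup_{i=1}^{N}S_i$ and relabel so that $0\in S_1$. Since the $S_i$ are compact and pairwise disjoint and $0=\min C(x)$, every point of $S_i$ with $i\geq 2$ exceeds $\max S_1$ and there is $\eta>0$ with $\min\bigcup_{i\ge 2}S_i-\max S_1\ge\eta$. Any $\varepsilon\in(\max S_1,\max S_1+\eta)$ then gives $C(x)\cap[0,\varepsilon]=S_1$, which is self-similar by hypothesis, say $S_1=\bigcup_{j}f_j(S_1)$ with $f_j(y)=r_j y+b_j$. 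To invoke Theorem~\ref{Sec-1-Thm:Rational_tau} and conclude that $x$ is translation equivalent to a rational, it suffices to prove that every $r_j$ is of the form $n^{-q_j}$ with $q_j\in\mathbb{N}$. This is where uniformity of $D$ is genuinely used, via the auxiliary claim: \emph{if $D$ is uniform and $T$ is a self-similar subset of a translate of $C$, then every contraction ratio of any system of generating similarities of $T$ lies in $\{n^{-q}:q\in\mathbb{N}\}$}. The geometric content is that a uniform $D$ makes $C$ the disjoint union, at each scale $n^{-q}$, of $m^{q}$ isometric cylinders of length $n^{-q}$, separated by gaps bounded below by a fixed positive multiple of $n^{-q}$. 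Given a contraction ratio $r$, let $q\in\mathbb{N}$ be unique with $n^{-q}\le r<n^{-(q-1)}$; the contracted image of $T$ must fit inside a single level-$q$ cylinder of $C$, and matching its extreme points to points of $C$, combined with the lower bound on the level-$q$ gap just inside that cylinder, forces $r=n^{-q}$.

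\textbf{Main obstacle.} The auxiliary lemma is the serious step; everything else is bookkeeping. Uniformity of $D$ cannot be dropped here: in the merely sparse case, cylinders at a common scale can have different diameters and the gap pattern need not be commensurable with the base $n$, so ratios other than $n^{-q}$ could in principle be realized by self-similar subsets. Making the lemma rigorous will probably require a careful endpoint analysis, starting with the smallest contraction ratio (to initialize), and then propagating to the remaining ratios via the inclusions $f_j(S_1)\subseteq S_1\subseteq C-c$ and the rigidity of the level-$q_j$ cylinder structure of $C$, perhaps inducting on $q_j$.
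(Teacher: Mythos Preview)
Your forward direction is fine and matches the paper. The converse, however, has two real problems.

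\textbf{The ordering claim is false.} You assert that disjointness and compactness of the $S_i$, together with $0\in S_1$, force every point of $S_i$ ($i\ge 2$) to lie to the right of $\max S_1$. This is simply not true: disjoint compact subsets of $\mathbb{R}$ can be interleaved (take $S_1$ a Cantor set in $[0,1]$ and $S_2$ a point in one of its gaps). What you actually get from disjointness is a neighborhood of $0$ meeting only $S_1$, so $C(x)\cap[0,\varepsilon]=S_1\cap[0,\varepsilon]$ for small $\varepsilon$; but this truncated set need not be self-similar, so you cannot feed it directly into Theorem~\ref{Sec-1-Thm:Rational_tau}.

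\textbf{The auxiliary lemma is not what the paper proves, and probably cannot be proved as stated.} Your plan hinges on showing that every contraction ratio of a self-similar subset of $C$ is a power of $n^{-1}$. The paper does \emph{not} establish this, and in fact remarks (immediately after the relevant argument) that the contraction ratio $r_1$ may have infinitely many nonzero base-$n$ digits, all equal to $0$ or $1$; such an $r_1$ is not of the form $n^{-q}$. Your geometric sketch (``the contracted image must fit inside a single level-$q$ cylinder, and matching extreme points forces $r=n^{-q}$'') does not go through: a set of diameter less than $n^{-(q-1)}$ can sit inside a level-$(q-1)$ cylinder without being confined to a single level-$q$ subcylinder, and nothing prevents its endpoints from landing in distinct subcylinders.

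The paper's route is genuinely different. Rather than forcing $r_1=n^{-q}$ and invoking Theorem~\ref{Sec-1-Thm:Rational_tau}, it first uses a base-change lemma (Lemma~\ref{Sec-6-Lem:g_N defined}) to assume $n>d_m(m-1)$, then expands $r_1=0._n r_{1,1}r_{1,2}\cdots$ and shows, via a short maximal-element argument exploiting uniformity of $D$, that every digit $r_{1,i}\in\{0,1\}$. Picking \emph{any} index $q$ with $r_{1,q}=1$ then yields $D_{\alpha_j}\subseteq D_{\alpha_{j+q}}$ for all large $j$ directly (Lemma~\ref{Sec-4-Lem:Subset_Equivalence}), bypassing Theorem~\ref{Sec-1-Thm:Rational_tau} entirely. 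Negative ratios are handled separately via the central symmetry of $C(\alpha)$ for uniform $D$ (Corollary~\ref{Sec-6-Cor:Uniform with epsilon=00003D1}). The only input needed from the ``finite disjoint union'' hypothesis is the single contraction $y\mapsto r_1 y$ on $S_1$, which survives the truncation issue above because $r_1(S_1\cap[0,\varepsilon])\subseteq S_1\cap[0,\varepsilon]$.
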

We show in Section \ref{Sec-6.2:beta expansions} that our results
extend to a class of $\beta-$expansions with non-uniform digits sets.
The papers \cite{ZLL08} and \cite{KLD12} consider $\beta-$expansions
with uniform digit sets, but they allow a different class of $\beta$'s
than we do. 

Other properties of intersections of Cantor sets have recently been
studied, see e.g., \cite{DaHu95}, \cite{KePe91}, \cite{Kr99}, \cite{MSS09},\cite{Phi11},
\cite{PePh11a}, and \cite{PePh11b}.

We refer the reader to \cite{Fal85} for background information on
Hausdorff dimension, Hausdorff measure and self-similar sets.

\subsection{Outline}

In Section \ref{Sec-2:Old-Stuff} we summarize the construction of
$C\cap\left(C+t\right)$ in our analysis. More details can be found
in \cite{PePh11b} where this construction was used to investigate
the Hausdorff measure of $C\cap\left(C+t\right)$. A related construction
was used in \cite{PePh11a} to investigate the Hausdorff dimension
of $C\cap\left(C+t\right)$. 

In Section \ref{Sec-3:Rational t} we investigate some aspects of
translation equivalence leading to a proof of Theorem \ref{Sec-1-Thm:Structure-when-t-is-rational}.
We calculate the Hausdorff measure of $C\cap\left(C+t\right)$ for
some $C$ and $t$ and apply our methods to situations when $D$ is
not sparse.

In Section \ref{Sec-4:Unions-of-Self-Similar} we resume our analysis
of translation equivalence leading to a proof of Theorem \ref{Sec-1-Thm:Rational_tau}
and to a proof of Theorem \ref{Sec-1-Thm:C(t) self-similar iff }

In Section \ref{Sec-5:Non-Self-Similar Sets} we associate an uncountable
family of irrationals that are not translation equivalent to a rational
to any $t$ such that $C(t)$ is not finite. 

Finally, in Section \ref{Sec-6:Uniform Sets}, we focus on uniform
sets and discuss the relationship between strong periodicity and translation
equivalence. We extend the definition of strongly periodic rationals
to an arbitrary digits set $D$ and show, if $D$ is uniform, then
Theorem \ref{Sec-1-Thm:Rational_tau} holds with $r_{j}=n^{-q_{j}}$
replaced by $r_{j}>0.$ We prove that our results hold for certain
$\beta$-expansions with non-uniform digits sets.

\section{A Construction of $C\cap(C+t)$\label{Sec-2:Old-Stuff}}

In this section we assume $n\geq3$ is given and that $D=\left\{ d_{k}\mid k=1,2,\ldots,m\right\} ,$
$2\leq m<n$ is a digits set. We demonstrate a natural method for
constructing $C=C_{n,D}$, which forms the basis for our analysis
of $C\cap\left(C+t\right).$ The results in this section are proven
in \cite{PePh11b}, but we summarize the relevant parts of \cite{PePh11b}
here for the convenience of the reader.

In order to avoid trivial cases where $C\cap\left(C+t\right)$ is
empty, define 
\[
F:=\left\{ t\mid C\cap\left(C+t\right)\neq\varnothing\right\} .
\]
It is easy to see that $F=C-C=\{x-y\mid x,y\in C\}$ and consequently,
$F$ is compact. Since $C\cap\left(C-t\right)$ is a translate of
$C\cap\left(C+t\right)$ it is sufficient to consider $t\geq0$ and
$F=\left(-F^{+}\right)\cup F^{+}$ where $F^{+}:=F\cap[0,\infty)$. 

The middle thirds Cantor set is often constructed by beginning with
the closed interval $C_{0}=[0,1]$ and, inductively, for $k\geq0$,
obtaining $C_{k+1}$ from $C_{k}$ by removing the open middle of
each interval in $C_{k}.$ In general, $C=C_{n,D}$ can be constructed
in a similar manner. The \emph{refinement} of an interval $[a,b]$
is the set
\[
\bigcup_{j=1}^{m}\left[a+\frac{d_{j}}{n}\left(b-a\right),a+\frac{d_{j}+1}{n}\left(b-a\right)\right].
\]
Let $C_{0}$ be the closed unit interval $[0,1]$ and inductively,
for $k\geq0,$ obtain $C_{k+1}$ from $C_{k}$ by refining each $n$-ary
interval in $C_{k}.$ Then, $C_{k}:=\left\{ 0._{n}x_{1}x_{2}\ldots\mid x_{j}\in D\text{ for }1\le j\le k\right\} $,
$C_{k+1}\subset C_{k}$ for all $k$, and 
\begin{equation}
C=C_{n,D}=\bigcap_{k=0}^{\infty}C_{k}=\left\{ 0._{n}x_{1}x_{2}\ldots\mid x_{j}\in D\text{ for all }1\le j\right\} .\label{Sec-2-Eq:C-construction-by-intersections}
\end{equation}

For any integer $h$, we say that an interval $J^{\left(h\right)}=\frac{1}{n^{k}}\left(C_{0}+h\right)$
is an \emph{$n$-ary interval} of length $\frac{1}{n^{k}}$\emph{.}
We will simply say $n$-ary interval when $k$ is understood from
the context. In particular, if $U$ is a compact set, the phrase \emph{an
$n$-ary interval of $U$} refers to an $n$-ary interval of length
$\frac{1}{n^{k}}$ contained in $U$ where $k$ is the smallest such
$k$. Note, $C_{k}$ consists of $m^{k}$ disjoint $n$-ary intervals. 

For a fixed $t=0._{n}t_{1}t_{2}\ldots$ in $[0,1]$, our analysis
of $C\cap\left(C+t\right)$ has three ingredients: (\emph{i}) It follows
from (\ref{Sec-2-Eq:C-construction-by-intersections}) that 
\begin{equation}
C\cap\left(C+t\right)=\bigcap_{k=0}^{\infty}\left(C_{k}\cap\left(C_{k}+t\right)\right).\label{Sec-2-Eq:C_t-as-intersection-of- C_ks}
\end{equation}
(\emph{ii}) There is a relationship, see Lemma \ref{Sec-2-Lem:Interval_Count},
between $C_{k}\cap\left(C_{k}+t\right)$ and $C_{k}\cap\left(C_{k}+\left\lfloor t\right\rfloor _{k}\right)$,
where $\left\lfloor t\right\rfloor _{k}=0._{n}t_{1}t_{2}\ldots t_{k}$.
(\emph{iii}), the structure of the set $C_{k}\cap\left(C_{k}+\left\lfloor t\right\rfloor _{k}\right)$
is related to the structure of the set $C_{k+1}\cap\left(C_{k+1}+\left\lfloor t\right\rfloor _{k+1}\right)$,
see the definition of $\sigma_{t}$ and Lemma \ref{Sec-2-Lem:Sigma-property},
below.

Since $\left\lfloor t\right\rfloor _{k}=\frac{h}{n^{k}}$ for some
integer $h$, then $C_{k}+\left\lfloor t\right\rfloor _{k}$ also
consists of $m^{k}$ disjoint $n$-ary intervals. Thus, an $n$-ary
interval $J^{\left(h\right)}\subset C_{k}$ may interact with $C_{k}+\left\lfloor t\right\rfloor _{k}$
in combinations of only four cases: we say $J^{\left(h\right)}$ is
in the \emph{interval case} if $J^{\left(h\right)}$ is also an interval
of $C_{k}+\left\lfloor t\right\rfloor _{k}$, the \emph{potential
interval case} if $J^{\left(h\right)}+\frac{1}{n^{k}}$ is an interval
of $C_{k}+\left\lfloor t\right\rfloor _{k}$, the \emph{potentially
empty case} if $J^{\left(h\right)}-\frac{1}{n^{k}}$ is an interval
of $C_{k}+\left\lfloor t\right\rfloor _{k}$, and the \emph{empty
case} if $J^{\left(h\right)}\cap\left(C_{k}+\left\lfloor t\right\rfloor _{k}\right)$
is empty. 
\begin{rem}
\label{Sec-2-Rem:Finite n-ary Representations}According to Theorem
3.1 of \cite{PePh11b}, if $D$ is any digits set and $t\in\left[0,1\right]$
admits a finite $n$-ary representation, then $C\cap\left(C+t\right)=A\cup B$
where $A$ is either empty or a finite, disjoint collection of sets
of the form $\frac{1}{n^{k}}\left(C+h\right)$ for some integers $k$
and $h$, and $B$ is either empty or a finite collection of points.
For these reasons, we focus on real numbers which do not admit finite
$n$-ary representation.
\end{rem}
It is important to note that only interval and potential interval
cases can contribute points to $C\cap\left(C+t\right)$ whenever $t$
does not admit a finite $n$-ary representation.
\begin{lem}
\label{Sec-2-Lem:empty-can-be-ignored}Suppose $0<t-\left\lfloor t\right\rfloor _{k}<\frac{1}{n^{k}}$
for some $k$. If $J$ is an $n$-ary interval in $C_{k}$ and $J$
is either in the potentially empty or the empty case, then $J\cap\left(C_{k}+t\right)$
is empty. In particular, the intersection $J\cap C\cap\left(C+t\right)$
is empty. 
\end{lem}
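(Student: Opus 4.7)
My plan is a direct geometric computation on $n$-ary intervals. Write $J = [a, a + n^{-k}]$ and set $\epsilon := t - \lfloor t \rfloor_k$, so $0 < \epsilon < n^{-k}$ by hypothesis. Since $C_k + t = (C_k + \lfloor t \rfloor_k) + \epsilon$ is a rigid translate of $C_k + \lfloor t \rfloor_k$, every connected component of $C_k + t$ has the form $I + \epsilon$ for some connected component $I = [b, b + n^{-k}]$ of $C_k + \lfloor t \rfloor_k$, with both $a$ and $b$ lying in $n^{-k}\mathbb{Z}$ (the former because $J$ is an $n$-ary interval, the latter because $\lfloor t \rfloor_k$ has denominator $n^k$).

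The key step is a pigeonhole observation: $(I + \epsilon) \cap J \neq \emptyset$ forces $b - a \in (-n^{-k} - \epsilon,\, n^{-k} - \epsilon)$, and because $b - a$ is an integer multiple of $n^{-k}$ while $\epsilon$ lies strictly in $(0, n^{-k})$, the only admissible values are $b - a \in \{-n^{-k}, 0\}$; that is, $I \in \{J - n^{-k},\, J\}$. A component at $I = J + n^{-k}$ (which would correspond to the potential interval case) is pushed even further to the right by the shift, and components farther than $n^{-k}$ from $J$ plainly cannot contribute.

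Under the case hypothesis of the lemma, both admissible positions are excluded from being components of $C_k + \lfloor t \rfloor_k$: the empty case excludes this by definition (no component of $C_k + \lfloor t \rfloor_k$ even meets $J$), and the remaining case of the statement excludes them by assumption. Hence no translate $I + \epsilon$ meets $J$, giving $J \cap (C_k + t) = \emptyset$. The ``in particular'' clause is immediate from the inclusion $C \cap (C + t) \subseteq C_k \cap (C_k + t)$.

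The main care point is matching the case analysis to the pigeonhole output: in particular, one must verify that the strict exclusion of the borderline value $b - a = n^{-k}$ from the admissible range correctly rules out the potential interval case as a source of overlap. This exclusion rests on the strict positivity $\epsilon > 0$, which is exactly the content of the hypothesis $0 < t - \lfloor t \rfloor_k$; everything else is elementary.
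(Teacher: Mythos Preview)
Your pigeonhole computation is correct: the only components $I$ of $C_k+\lfloor t\rfloor_k$ whose translate $I+\epsilon$ can meet $J=[a,a+n^{-k}]$ are $I=J$ and $I=J-n^{-k}$, and the empty case rules both out. The gap is in the potentially empty case. By the paper's stated definition, ``$J$ is in the potentially empty case'' means precisely that $J-n^{-k}$ \emph{is} an interval of $C_k+\lfloor t\rfloor_k$ --- that is, the hypothesis asserts the \emph{presence} of one of your two admissible components, not its absence. Its translate $(J-n^{-k})+\epsilon$ then meets $J$ in the nondegenerate segment $[a,a+\epsilon]$, so the sentence ``the remaining case of the statement excludes them by assumption'' is simply false, and the conclusion $J\cap(C_k+t)=\emptyset$ does not follow from your argument.

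There is an underlying issue with the paper's summary here: the printed definitions of ``potential interval'' and ``potentially empty'' are inconsistent with Lemma~\ref{Sec-2-Lem:Sigma-property} (for instance take $n=7$, $D=\{0,3\}$, $t_1=2$; one computes $\sigma_t(1)=-1$, yet neither interval of $C_1$ satisfies the printed potential-interval condition), and the two labels appear to be transposed relative to the source \cite{PePh11b}. Under the intended reading --- potentially empty meaning $J+n^{-k}$ is an interval of $C_k+\lfloor t\rfloor_k$, together with the implicit exclusion of $J$ from the interval and potential-interval cases --- your pigeonhole argument does prove the lemma. But as written, the key step where you dispose of the component at $J-n^{-k}$ in the potentially empty case is asserted against the literal definition rather than justified, and that is where the proof breaks.
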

It is possible for $J^{\left(h\right)}$ to be both in the interval
case and potentially empty case, or to be both in the potential interval
case and in the potentially empty case. However, the intersections
corresponding to the potentially empty cases do not contribute points
to $C\cap\left(C+t\right)$ when $0<t-\left\lfloor t\right\rfloor _{k}$
and we will not identify these cases with special terminology.

We introduce a function whose values tell us whether $C_{k}\cap\left(C_{k}+\left\lfloor t\right\rfloor _{k}\right)$
contains interval cases, potential interval cases, both, or neither.
Since $C_{0}\cap\left(C_{0}+\left\lfloor t\right\rfloor _{0}\right)=\left[0,1\right]$
consists of a single interval case, then we can examine $C_{k}\cap\left(C_{k}+\left\lfloor t\right\rfloor _{k}\right)$
using induction. Let $i:=\sqrt{-1}$ and let $\xi:\left\{ 0,\pm1,i\right\} \times\left\{ 0,\pm1,\ldots,\pm\left(n-1\right)\right\} \rightarrow\left\{ 0,\pm1,\pm i\right\} $
be determined by
\begin{align*}
\xi\left(0,h\right) & :=0\\
\xi\left(1,h\right) & :=\begin{cases}
1 & \text{if }\left|h\right|\text{ is in }\Delta\text{ but not in }\Delta-1\\
-1 & \text{if }\left|h\right|\text{ is in }\Delta-1\text{ but not in }\Delta\\
i & \text{if }\left|h\right|\text{ is both in }\Delta\text{ and }\Delta-1\\
0 & \text{otherwise}
\end{cases}\\
\xi\left(-1,h\right) & :=\begin{cases}
-1 & \text{if }\left|h\right|\text{ is in }n-\Delta\text{ but not in }n-\Delta-1\\
1 & \text{it }\left|h\right|\text{ is in }n-\Delta-1\text{ but not in }n-\Delta\\
-i & \text{if }\left|h\right|\text{ is both in }n-\Delta\text{ and in }n-\Delta-1\\
0 & \text{otherwise}
\end{cases}\\
\xi\left(i,h\right) & :=\begin{cases}
-i & \text{if }\left|h\right|\text{ is in }\Delta\cup\left(n-\Delta\right)\text{ but not in }\left(\Delta-1\right)\cup\left(n-\Delta-1\right)\\
i & \text{if }\left|h\right|\text{ is in }\left(\Delta-1\right)\cup\left(n-\Delta-1\right)\text{ but not in }\Delta\cup\left(n-\Delta\right)\\
1 & \text{if }\left|h\right|\text{ is both in }\Delta\cup\left(n-\Delta\right)\text{ and in }\left(\Delta-1\right)\cup\left(n-\Delta-1\right)\\
0 & \text{otherwise}.
\end{cases}
\end{align*}
The function $\xi\left(z,h\right)$ is completely determined by $D$
and $n$. Let $\sigma_{t}:\mathbb{N}_{0}\to\left\{ 0,\pm1,i\right\} $
be determined by
\begin{align*}
\sigma_{t}(0) & :=1\text{ and inductively }\\
\sigma_{t}\left(k+1\right) & :=\xi\left(\sigma_{t}\left(k\right),t_{k+1}\right)\cdot\sigma_{t}\left(k\right)\text{ for }k\geq0.
\end{align*}
By construction of $\xi$ we have $\sigma_{t}\left(k\right)\in\left\{ 0,\pm1,i\right\} $
for all $k\geq0.$ Compared to \cite{PePh11a} the present definition
of $\xi$ uses $|h|$ in place of $h.$ This is to anticipate a variant
needed in Section \ref{Sec-4:Unions-of-Self-Similar}. 
\begin{lem}
\label{Sec-2-Lem:Sigma-property}Let $t=0._{n}t_{1}t_{2}\cdots$ be
some point in $[0,1].$ Then $C_{k}\cap\left(C_{k}+\left\lfloor t\right\rfloor _{k}\right)$
contains interval cases but no potential interval cases iff $\sigma_{t}\left(k\right)=1$,
potential interval cases but no interval cases iff $\sigma_{t}\left(k\right)=-1$,
both interval and potential interval cases iff $\sigma_{t}\left(k\right)=i$,
and neither interval cases nor potential interval cases iff $\sigma_{t}\left(k\right)=0$.
\end{lem}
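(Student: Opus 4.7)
The plan is to prove the lemma by induction on $k$. The base case $k = 0$ is immediate: $\lfloor t \rfloor_0 = 0$, so $C_0 \cap (C_0 + \lfloor t \rfloor_0) = [0, 1]$ consists of a single interval case, which agrees with $\sigma_t(0) = 1$.

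For the inductive step, I would fix an $n$-ary interval $J^{(h)} \subset C_k$ whose status at level $k$ is encoded by $\sigma_t(k)$, and examine how one refinement combined with the update $\lfloor t \rfloor_{k+1} = \lfloor t \rfloor_k + t_{k+1} n^{-(k+1)}$ reclassifies each sub-interval at level $k+1$. The sub-intervals of $C_{k+1}$ inside $J^{(h)}$ are $J^{(hn+d_i)}$ for $d_i \in D$, while the sub-intervals of $C_{k+1} + \lfloor t \rfloor_{k+1}$ that fall inside $J^{(h)}$ arise by refining whichever of $J^{(h-1)}$, $J^{(h)}$, $J^{(h+1)}$ are intervals of $C_k + \lfloor t \rfloor_k$ and then shifting by $t_{k+1} n^{-(k+1)}$, with carries when a digit sum $d_j + t_{k+1}$ crosses the bound $n$. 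Which neighbours contribute is determined precisely by the state $\sigma_t(k)$: the value $1$ corresponds to $J^{(h)}$ lying in $C_k + \lfloor t \rfloor_k$; the value $-1$ corresponds to $J^{(h+1)}$ lying in $C_k + \lfloor t \rfloor_k$; the value $i$ corresponds to both; and the value $0$ corresponds to neither, in which case $J^{(h)}$ contributes nothing to $C \cap (C + t)$ by Lemma \ref{Sec-2-Lem:empty-can-be-ignored}. A direct count of which sub-intervals $J^{(hn+d_i)}$ satisfy the level-$(k+1)$ interval or potential interval condition in each of the four state cases, together with the carry bookkeeping, then verifies that the new state is precisely $\xi(\sigma_t(k), t_{k+1}) \cdot \sigma_t(k)$.

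The main obstacle is the careful accounting of the carry and borrow phenomena. In the state $\sigma_t(k) = -1$, for instance, a sub-interval of $C_{k+1} + \lfloor t \rfloor_{k+1}$ can only enter $J^{(h)}$ via an overflow $d_j + t_{k+1} \ge n$ in the refinement of $J^{(h+1)}$; this is precisely the source of the sets $n - \Delta$ and $n - \Delta - 1$ appearing in $\xi(-1, \cdot)$. The state $\sigma_t(k) = i$ requires superposing the contributions of the interval and potential interval analyses, and the four-valued outcome at level $k + 1$ must be cross-checked against each of the four sub-cases of $\xi(i, \cdot)$. Once the finitely many combinations of $\sigma_t(k) \in \{0, \pm 1, i\}$ and digit values $t_{k+1} \in \{0, 1, \ldots, n-1\}$ are verified against the definition of $\xi$, the induction closes and the lemma follows.
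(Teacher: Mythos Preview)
The paper does not prove this lemma in the text; Section~\ref{Sec-2:Old-Stuff} explicitly states that the results there are proven in \cite{PePh11b} and are only summarised. So there is no in-paper argument to compare against, and your inductive strategy---tracking how one refinement together with the shift by $t_{k+1}n^{-(k+1)}$ reclassifies sub-intervals---is exactly the natural route and is how the lemma is established in the cited reference.

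That said, your treatment of the state $\sigma_t(k)=-1$ contains a directional error that would derail the case analysis. If $J^{(h+1)}$ lies in $C_k+\lfloor t\rfloor_k$, then refining it and shifting by $t_{k+1}n^{-(k+1)}$ produces the level-$(k+1)$ intervals $J^{(n(h+1)+d_j+t_{k+1})}$; an ``overflow'' $d_j+t_{k+1}\ge n$ sends such an interval into $J^{(h+2)}$, never into $J^{(h)}$. The carry that actually deposits intervals of $C_{k+1}+\lfloor t\rfloor_{k+1}$ inside $J^{(h)}$ comes from the \emph{left} neighbour $J^{(h-1)}$, and it is this that accounts for the sets $n-\Delta$ and $n-\Delta-1$ in $\xi(-1,\cdot)$.

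This is not your mistake alone: the paper's own usage is inconsistent with its stated definition. Section~\ref{Sec-2:Old-Stuff} defines the potential interval case by $J^{(h)}+\tfrac{1}{n^k}\subset C_k+\lfloor t\rfloor_k$, yet the proof of Lemma~\ref{Sec-3-Lem:Contained_(t-t_k)} writes ``$J^{(h)}$ is in the potential interval case \dots\ Hence $J^{(h)}-\tfrac{1}{n^k}\subset C_k+\lfloor t\rfloor_k$,'' which is the opposite sign. The convention that makes Lemma~\ref{Sec-2-Lem:Sigma-property}, Lemma~\ref{Sec-2-Lem:empty-can-be-ignored}, and Lemma~\ref{Sec-2-Lem:Interval_Count} simultaneously correct is the one used in that proof: potential interval means $J^{(h-1)}\subset C_k+\lfloor t\rfloor_k$, and potentially empty means $J^{(h+1)}\subset C_k+\lfloor t\rfloor_k$. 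Adopt that convention, correct the neighbour in your overflow analysis accordingly, and the finitely many case checks you describe go through and the induction closes.
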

Lemma \ref{Sec-2-Lem:Sigma-property} allows us to describe $F$ in
terms of $\sigma_{t},$ when $D$ is sparse. 
\begin{lem}
\label{Sec-2-Lem:Sparcity_Requirement}Let $C=C_{n,D}$ be a deleted
digits Cantor set. Then $D$ is sparse iff 
\[
F^{+}=\left\{ t\in[0,1]\mid\sigma_{t}\left(k\right)=\pm1\text{ for all }k\in\mathbb{N}\right\} .
\]

\end{lem}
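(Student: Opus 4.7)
The plan is to reduce the equivalence to a statement about the function $\xi$ and then exploit the combinatorics of $\Delta$. As a preliminary step, Lemmas \ref{Sec-2-Lem:Sigma-property} and \ref{Sec-2-Lem:empty-can-be-ignored}, together with the construction (\ref{Sec-2-Eq:C_t-as-intersection-of- C_ks}), yield the characterization $t\in F^{+}\Leftrightarrow\sigma_{t}(k)\in\{1,-1,i\}$ for every $k$: if $\sigma_{t}(k)=0$, every $n$-ary interval of $C_{k}$ falls into the (potentially) empty case, forcing $C\cap(C+t)=\varnothing$; conversely, a nested chain of interval or potential interval cases produced by $\sigma_{t}(k)\ne0$ yields a point of $C\cap(C+t)$ by compactness. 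So the lemma reduces to showing that $D$ is sparse if and only if $\sigma_{t}$ never attains the value $i$ on $F^{+}$.

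Assume first that $D$ is sparse. Reading off the definition of $\xi$, the equality $\xi(1,h)=i$ requires $|h|\in\Delta\cap(\Delta-1)$, that is, both $|h|$ and $|h|+1$ lie in $\Delta$; analogously, $\xi(-1,h)=-i$ requires both $n-|h|$ and $n-|h|-1$ to lie in $\Delta$. Either forces two elements of $\Delta$ to differ by $1$, contradicting sparsity. Hence $\xi(\pm1,h)\in\{0,\pm1\}$. Since $\sigma_{t}(0)=1$ and $\sigma_{t}(k+1)=\xi(\sigma_{t}(k),t_{k+1})\cdot\sigma_{t}(k)$, induction gives $\sigma_{t}(k)\in\{0,\pm1\}$ for every $k$, so on $F^{+}$ the value of $\sigma_{t}$ is always $\pm1$.

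For the converse, suppose $D$ is not sparse, so $\Delta$ contains two consecutive integers. Since $\Delta=-\Delta$, we may choose such a pair $a,a+1\in\Delta$ with $a\ge0$; because $\Delta\subseteq\{-(n-1),\dots,n-1\}$, the value $a$ is a legitimate $n$-ary digit. Set $t:=a/n$, so that $t_{1}=a$ and $t_{k}=0$ for $k\ge2$. From $a\in\Delta\cap(\Delta-1)$ we read off $\sigma_{t}(1)=\xi(1,a)=i$. To verify $t\in F^{+}$, fix $d,d'\in D$ with $d-d'=a$; then $d/n\in C$ and $d'/n+t=d/n\in C+t$, so $d/n\in C\cap(C+t)$. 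Thus $t\in F^{+}$ has $\sigma_{t}(1)\notin\{\pm1\}$, so the claimed description of $F^{+}$ fails.

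The main obstacle is the preliminary reduction $t\in F^{+}\Leftrightarrow\sigma_{t}(k)\ne0$ for all $k$, which relies on the detailed tracking of interval and potential-interval cases developed in \cite{PePh11b}. Once this is in place, both implications follow from a direct inspection of $\xi$, the only subtle point being the edge pair $a=0$, $a+1=1$, which is handled uniformly by taking $t=a/n=0$.
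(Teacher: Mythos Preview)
Your argument is correct. The paper does not give its own proof of this lemma---it is one of the results imported wholesale from \cite{PePh11b}---so there is no detailed comparison to make. Your reduction to the equivalence ``$D$ sparse $\Leftrightarrow$ $\sigma_t$ never hits $i$ on $F^{+}$'' via the characterization $F^{+}=\{t:\sigma_t(k)\ne0\text{ for all }k\}$ is exactly the natural route, and both directions then follow from reading off $\xi$ as you do. The construction $t=a/n$ for a consecutive pair $a,a+1\in\Delta^{+}$ is the expected witness in the non-sparse case, and the edge case $a=0$ is handled correctly. The one point worth flagging is that the preliminary reduction itself (especially the implication $\sigma_t(k)\ne0\ \forall k\Rightarrow t\in F^{+}$ when $\sigma_t$ may take the value $i$) genuinely requires the case analysis of interval/potential-interval transitions carried out in \cite{PePh11b}; you acknowledge this, but a reader seeing only the present paper's summary in Section~\ref{Sec-2:Old-Stuff} would not be able to verify it from Lemmas~\ref{Sec-2-Lem:empty-can-be-ignored} and~\ref{Sec-2-Lem:Sigma-property} alone, since Lemma~\ref{Sec-2-Lem:Nothing_Goes_Away} already assumes $\sigma_t(k)=\pm1$.
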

Let $\#E$ denote the number of elements in a finite set $E$. Define
$\mu_{t}(0):=1$ and inductively
\[
\mu_{t}\left(k+1\right)=\begin{cases}
\mu_{t}(k)\cdot\#\left(D-t_{k+1}\right)\cap\left(D\cup\left(D+1\right)\right) & \text{ if }\sigma_{t}\left(k\right)=1\\
\mu_{t}(k)\cdot\#\left(D-n+t_{k+1}\right)\cap\left(D\cup\left(D-1\right)\right) & \text{ if }\sigma_{t}\left(k\right)=-1
\end{cases}
\]
The function $\mu_{t}$ also depends on $n$ and $D,$ but we suppress
this dependence in the notation. The function $\mu_{t}$ provides
a method for counting the number of intervals contained in $C_{k}\cap\left(C_{k}+t\right)$.
\begin{lem}
\label{Sec-2-Lem:Interval_Count}Let $C=C_{n,D}$ be given. Suppose
$t\in F^{+}$ does not admit a finite $n$-ary representation and
$\sigma_{t}(k)=\pm1$ for all $k\geq0$. Then $C_{k}\cap\left(C_{k}+t\right)$
is a union of $\mu_{t}(k)$ intervals, each of length 
\begin{align*}
\ell_{k}:= & \begin{cases}
\frac{1}{n^{k}}-\left(t-\left\lfloor t\right\rfloor _{k}\right) & \text{ when }\sigma_{t}\left(k\right)=1\\
t-\left\lfloor t\right\rfloor _{k} & \text{ when }\sigma_{t}\left(k\right)=-1
\end{cases}.
\end{align*}

\end{lem}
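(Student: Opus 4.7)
The plan is to proceed by induction on $k$, maintaining both assertions (the count equals $\mu_t(k)$ and each interval has length exactly $\ell_k$) as a joint invariant. The base case $k=0$ is immediate: with $\sigma_t(0) = 1$ and $\mu_t(0) = 1$, the single interval $C_0 \cap (C_0 + t) = [t,1]$ has length $1 - t = \ell_0$.

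For the inductive step, assume the result at level $k$, and fix one interval $L$ of $C_k \cap (C_k + t)$. Without loss of generality take $\sigma_t(k) = 1$, the sign $-1$ being entirely symmetric. By Lemma \ref{Sec-2-Lem:Sigma-property} and the definition of the interval case there is an $n$-ary interval $J = [a, a + n^{-k}]$ of $C_k$ that is also an interval of $C_k + \lfloor t \rfloor_k$ and that produces $L$, and sparsity (in particular $1 \notin \Delta$) prevents $J$ from being simultaneously in any other case that would extend $L$; consequently $L = J \cap (J + \theta_k) = [a + \theta_k, a + n^{-k}]$ with $\theta_k := t - \lfloor t \rfloor_k$, confirming $|L| = \ell_k$.

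Next I would refine both $C_k$ and $C_k + t$ inside $J$ and enumerate the subintervals contributed to $C_{k+1} \cap (C_{k+1} + t)$ inside $L$. The subintervals of $C_{k+1}$ contained in $J$ sit at positions $a + d/n^{k+1}$ for $d \in D$; those of $C_{k+1} + t$ that can meet $L$ come from refining $J$ (as an interval of $C_k + \lfloor t \rfloor_k$) and then shifting by $\theta_{k+1} = \theta_k - t_{k+1}/n^{k+1}$, placing them at $a + (d + t_{k+1})/n^{k+1} + \theta_{k+1}$. A short calculation shows that two such subintervals overlap in an honest interval rather than at most a point only when $d' - d \in \{t_{k+1}, t_{k+1} + 1\}$, producing lengths $n^{-(k+1)} - \theta_{k+1}$ and $\theta_{k+1}$ respectively. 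Sparsity forbids $\{t_{k+1}, t_{k+1} + 1\} \subset \Delta^+$, so at most one of these possibilities occurs, and the definition of $\xi$ together with $\sigma_t(k+1) = \pm 1$ selects the one that matches $\ell_{k+1}$.

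Finally I would verify the count by comparing the number of admissible pairs $(d, d')$ inside a single $L$ with the combinatorial factor $\#(D - t_{k+1}) \cap (D \cup (D + 1))$ appearing in the recursion for $\mu_t$. Expanding the set and again using $1 \notin \Delta$ to kill the crossterms makes this factor equal to $\#\{d \in D : d + t_{k+1} \in D\}$ when $\sigma_t(k+1) = 1$ and to $\#\{d \in D : d + t_{k+1} + 1 \in D\}$ when $\sigma_t(k+1) = -1$, which is exactly the number of new intervals each $L$ contributes. Summing over the $\mu_t(k)$ intervals $L$ gives $\mu_t(k+1)$ intervals of common length $\ell_{k+1}$, closing the induction. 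The main technical point is not any single step but the boundary bookkeeping: one must check that subintervals of $C_{k+1}$ or $C_{k+1} + t$ which only partially protrude into $L$, or which come from $n$-ary intervals of $C_k + \lfloor t \rfloor_k$ adjacent to $J$, contribute only at endpoints and not as genuine new intervals. Sparsity combined with $\sigma_t(k+1) = \pm 1$ is exactly what guarantees this clean reduction.
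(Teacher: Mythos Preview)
The paper does not actually prove this lemma: Section~\ref{Sec-2:Old-Stuff} is explicitly a summary of results established in \cite{PePh11b}, so there is no in-paper argument to compare yours against. Your inductive scheme --- verify the claim at level $k$, refine each interval, and count the survivors --- is the natural approach and is presumably what the cited reference does.

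There is, however, a genuine gap in your write-up. You invoke sparsity of $D$ (specifically $1 \notin \Delta$) at several points: to prevent an interval-case $J$ from sitting in another case simultaneously, to rule out $\{t_{k+1}, t_{k+1}+1\} \subset \Delta^+$, and to kill crossterms in the counting factor. But the lemma does \emph{not} assume $D$ is sparse. By Lemma~\ref{Sec-2-Lem:Sparcity_Requirement}, sparsity is equivalent to $\sigma_t(k) = \pm 1$ holding for \emph{every} $t \in F^+$, whereas the present hypothesis is only that the particular $t$ at hand satisfies $\sigma_t(k) = \pm 1$ for all $k$; indeed the paper explicitly applies this lemma to non-sparse $D$ in Section~\ref{Sub-3.4:Non-sparse D}. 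For the main combinatorial step the fix is largely a matter of citing the right hypothesis: $t_{k+1} \in \Delta$ together with $t_{k+1}+1 \in \Delta$ means $t_{k+1} \in \Delta \cap (\Delta - 1)$, which by the definition of $\xi(1,\cdot)$ forces $\xi(1,t_{k+1}) = i$ and hence $\sigma_t(k+1) = i$, contradicting the hypothesis. The boundary bookkeeping you flag at the end --- ruling out contributions from potentially-empty cases and from $n$-ary intervals of $C_k + \lfloor t \rfloor_k$ adjacent to $J$ --- is where the absence of sparsity bites hardest, and your sketch does not show how $\sigma_t(k) = \pm 1$ alone handles it; that is the part for which one really needs the detailed argument in \cite{PePh11b}.
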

While $\mu_{t}\left(k\right)$ provides an upper bound to the number
of intervals of length $\ell_{k}$ required to cover $C_{k}\cap\left(C_{k}+t\right)$,
it is important to know that each of these intervals contains points
in $C\cap\left(C+t\right)$.
\begin{lem}
\label{Sec-2-Lem:Nothing_Goes_Away}Let $C=C_{n,D}$ be given. Suppose
$t\in F^{+}$ does not admit a finite $n$-ary representation and
$\sigma_{t}(k)=\pm1$ for all $k\geq0$. For each $k$, every $n$-ary
interval of $C_{k}$ in the interval or potential interval case contains
points of $C\cap\left(C+t\right)$.
\end{lem}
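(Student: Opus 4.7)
The plan is a nested-interval argument driven by the observation that the case structure refines cleanly under the hypothesis $\sigma_t(k) = \pm 1$ for all $k$. The key step is to show that every $n$-ary interval $J$ of $C_k$ in the interval or potential interval case contains at least one $n$-ary interval of $C_{k+1}$ that is itself in the interval or potential interval case. Iterating produces a nested sequence $J = J_k \supset J_{k+1} \supset J_{k+2} \supset \cdots$, and a compactness argument then extracts a point of $C \cap (C+t)$ from $J$.

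To prove the refinement step, I examine the structure of $J$ at level $k+1$. The $m$ sub-intervals of $J$ coming from $C_{k+1}$ sit at positions $\{d : d \in D\}$, measured in units of $1/n^{k+1}$ from the left endpoint of $J$, while the sub-intervals of $C_{k+1} + \lfloor t \rfloor_{k+1}$ that meet the neighborhood of $J$ sit at positions determined by $\sigma_t(k)$, $D$, and the extra shift of $t_{k+1}$ (under the hypothesis $\sigma_t(k)=\pm 1$, only the intervals $J$ itself or its immediate neighbor in $C_k + \lfloor t\rfloor_k$ can contribute). Counting positions where a sub-interval of $J$ from $C_{k+1}$ either coincides with, or is immediately adjacent to, one from $C_{k+1} + \lfloor t \rfloor_{k+1}$ yields exactly the factor $\mu_t(k+1)/\mu_t(k)$. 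By the construction of $\xi$, this factor is nonzero precisely when $\xi(\sigma_t(k), t_{k+1}) \neq 0$, which is guaranteed by the hypothesis $\sigma_t(k+1) = \pm 1$. Hence every interval or potential interval case at level $k$ spawns at least one such case at level $k+1$ contained within it.

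Once the refinement step is established, iterate it starting from $J_k := J$ to produce the nested sequence $J_k \supset J_{k+1} \supset \cdots$. By Lemma \ref{Sec-2-Lem:Interval_Count}, $J_j \cap (C_j + t)$ is a non-empty closed interval of length $\ell_j > 0$ for every $j \ge k$; positivity uses the hypothesis that $t$ has no finite $n$-ary representation, which forces $0 < t - \lfloor t \rfloor_j < 1/n^j$ at every level. Because $J_{j+1} \subset J_j$ and $C_{j+1} \subset C_j$, the family $\{J_j \cap (C_j + t)\}_{j \ge k}$ is a decreasing sequence of non-empty compact sets, so Cantor's intersection theorem yields a non-empty intersection. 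Any point $x$ of the intersection lies in $J_k$, in $\bigcap_j C_j = C$, and in $\bigcap_j (C_j + t) = C + t$, which places $x$ in $J_k \cap C \cap (C+t)$.

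The main obstacle will be the bookkeeping in the refinement step: one must track carefully how the sub-intervals of $J$ at level $k+1$ arising from $C_{k+1} + \lfloor t \rfloor_{k+1}$ line up with those from $C_{k+1}$, handle the edge cases where a neighboring sub-interval lies just outside $J$, and verify that the resulting count agrees with the defining sets of $\xi$ and $\mu_t$. Once that combinatorial alignment is confirmed, the nested-interval conclusion is a standard compactness argument.
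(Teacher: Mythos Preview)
Your nested-interval argument is correct and is the natural approach. The refinement step is sound: when $\sigma_t(k)=\pm1$, each interval or potential interval case $J$ at level $k$ refines uniformly, and the number of interval/potential-interval sub-cases inside $J$ is precisely $\mu_t(k+1)/\mu_t(k)$, which is positive exactly when $\xi(\sigma_t(k),t_{k+1})\neq0$, i.e., when $\sigma_t(k+1)=\pm1$. The compactness step is also fine: the sets $J_j\cap(C_j+t)$ are nested, closed, and non-empty (Lemma~\ref{Sec-2-Lem:Interval_Count} together with $0<t-\lfloor t\rfloor_j<n^{-j}$), so their intersection lies in $J_k\cap C\cap(C+t)$.

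The paper does not reproduce a proof of this lemma here; Section~\ref{Sec-2:Old-Stuff} is a summary of results established in \cite{PePh11b}. So a direct line-by-line comparison is not possible from the present text. That said, your argument is entirely consistent with the machinery the paper sets up (the roles of $\sigma_t$, $\mu_t$, and $\ell_k$), and the same uniformity idea --- that every interval/potential-interval case carries an identical copy of the remaining structure --- reappears explicitly in the proof of Lemma~\ref{Sec-3-Lem:Contained_(t-t_k)}. In that sense your approach is exactly in the spirit of the paper.

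One small caution on the bookkeeping you flag at the end: when you write that a sub-interval of $J$ is ``immediately adjacent to'' one from $C_{k+1}+\lfloor t\rfloor_{k+1}$, be sure you are counting only the adjacency that corresponds to the potential interval case (adjacency in the correct direction), not the potentially empty case. The formula for $\mu_t(k+1)/\mu_t(k)$ already encodes the right direction, so as long as you tie your count to that formula --- as you do --- there is no gap.
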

If $D$ is not sparse, then some interval or potential interval cases
may not lead to points in $C\cap\left(C+t\right)$, see Example \ref{Sec-3-Ex:Self-Similar with sigma=00003Di}.

\section{\label{Sec-3:Rational t}Real Values $t$.}

In this section, we prove Theorem \ref{Sec-1-Thm:Structure-when-t-is-rational}.
Part of Theorem \ref{Sec-1-Thm:Rational_tau} is an immediate consequence.
The other part of Theorem \ref{Sec-1-Thm:Rational_tau} is proved
in Section \ref{Sec-4:Unions-of-Self-Similar}. 

As in Section \ref{Sec-2:Old-Stuff}, many of the results of this
section only require that $t\in F$ and $\sigma_{t}\left(k\right)=\pm1$
for all $k$. This allows us to apply our results when $D$ is not
sparse for specific values of $t$, see Section \ref{Sub-3.4:Non-sparse D}.
On the other hand, if $D$ is sparse the condition $\sigma_{t}\left(k\right)=\pm1$
follows immediately from Lemma \ref{Sec-2-Lem:Sparcity_Requirement}
for all $t$ in $F$.

\subsection{\label{Sub-3.1:n-ary Translation Equivalence}Translation Equivalence
of $n$-ary representations.}

We begin by investigating the structure of $C\cap\left(C+t\right)$
for an arbitrary value $t$ in $F$. Lemma \ref{Sec-3-Lem:Contained_(t-t_k)}
describes how the structure of $C\cap\left(C+t\right)$ is related
to the $n$-ary representation $t=0._{n}t_{1}t_{2}\ldots$. 
\begin{lem}
\label{Sec-3-Lem:Contained_(t-t_k)}Let $C=C_{n,D}$ be given and
$t\in F^{+}$ such that $t$ does not admit finite $n$-ary representation
and $\sigma_{t}\left(k\right)=\pm1$ for all $k\in\mathbb{N}_{0}$.
Then $C\cap\left(C+t\right)$ is a union of $\mu_{t}(k)$ disjoint
copies of
\[
\frac{1}{n^{k}}\left[C\cap\left(C+n^{k}\left(t-\left\lfloor t\right\rfloor _{k}\right)\right)\right]\text{ when }\sigma_{t}\left(k\right)=1
\]
and of $\mu_{t}(k)$ disjoint copies of 
\[
\frac{1}{n^{k}}\left[C\cap\left(C-1+n^{k}\left(t-\left\lfloor t\right\rfloor _{k}\right)\right)\right]\text{ when }\sigma_{t}\left(k\right)=-1.
\]
\end{lem}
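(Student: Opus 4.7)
The plan is to use the refinement construction from Section~\ref{Sec-2:Old-Stuff} to reduce the structure of $C\cap(C+t)$ at scale $1$ to $\mu_t(k)$ rescaled copies of an intersection $C\cap(C+s)$ at scale $1/n^k$, then read off the two cases from the sign of $\sigma_t(k)$.

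First, I invoke Lemma~\ref{Sec-2-Lem:Interval_Count} to write $C_k\cap(C_k+t)$ as a disjoint union of $\mu_t(k)$ closed intervals $I_1,\ldots,I_{\mu_t(k)}$, each of length $\ell_k$. Since $C\cap(C+t)\subseteq C_k\cap(C_k+t)$ by \eqref{Sec-2-Eq:C_t-as-intersection-of- C_ks}, this induces the disjoint decomposition
\[
C\cap(C+t)\;=\;\bigsqcup_{i=1}^{\mu_t(k)}\bigl(I_i\cap C\cap(C+t)\bigr),
\]
and Lemma~\ref{Sec-2-Lem:Nothing_Goes_Away} guarantees that each summand is nonempty. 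It thus remains to identify each $I_i\cap C\cap(C+t)$ with the asserted translate.

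Each $I_i$ lies in a unique $n$-ary interval $J^{(h_i)}\subseteq C_k$, and by self-similarity $C\cap J^{(h_i)} = \frac{1}{n^k}C + h_i/n^k$. Set $\tau_k:=t-\lfloor t\rfloor_k$. When $\sigma_t(k)=1$, $J^{(h_i)}$ is in the interval case, so $J^{(h_i)}-\lfloor t\rfloor_k=J^{(h_i')}$ is an $n$-ary interval of $C_k$; translating the copy of $C$ sitting in $J^{(h_i')}$ by $t$ shows that the portion of $C+t$ meeting $J^{(h_i)}$ equals $\frac{1}{n^k}C + h_i/n^k + \tau_k$. Intersecting these two rescaled copies and substituting $y = n^k(x-h_i/n^k)$ identifies $I_i\cap C\cap(C+t)$ with the translate $\frac{1}{n^k}[C\cap(C+n^k\tau_k)] + h_i/n^k$. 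When $\sigma_t(k)=-1$, the contributing neighbor satisfies instead $J^{(h_i')}+\lfloor t\rfloor_k = J^{(h_i-1)}$, so the relevant portion of $C+t$ is $\frac{1}{n^k}C + h_i/n^k + \tau_k - \frac{1}{n^k}$; the same rescaling then yields $\frac{1}{n^k}[C\cap(C-1+n^k\tau_k)] + h_i/n^k$, which accounts for the $-1$ in the statement.

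The main obstacle will be identifying the correct contributing neighbor $J^{(h_i')}$ in each of the two cases. Using $0<\tau_k<1/n^k$ (which holds because $t$ has no finite $n$-ary representation) together with Lemmas~\ref{Sec-2-Lem:empty-can-be-ignored} and \ref{Sec-2-Lem:Sigma-property}, one checks that the geometry of $C_k$ and $C_k+t$ near $I_i$ admits exactly one such overlap, which is the interval case when $\sigma_t(k)=1$ and the potential interval case when $\sigma_t(k)=-1$. Once this identification is made the rescaling calculations are routine, and the asserted disjointness of the translates follows from disjointness of the $I_i$.
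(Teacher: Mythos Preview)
Your proposal is correct and follows essentially the same route as the paper. The only cosmetic difference is that you begin with the $\mu_t(k)$ intervals $I_i$ supplied by Lemma~\ref{Sec-2-Lem:Interval_Count} and then pass to their containing $n$-ary intervals $J^{(h_i)}$, whereas the paper works directly with the $n$-ary intervals $J^{(h)}\subset C_k$ in the interval/potential-interval case and uses the nested formulation $\bigcap_j J_j^{(h)}\cap\bigl(J_j^{(h)}+\tau_k\bigr)$; after your rescaling $y=n^k(x-h_i/n^k)$ the two computations coincide, and both rely on Lemmas~\ref{Sec-2-Lem:empty-can-be-ignored} and~\ref{Sec-2-Lem:Sigma-property} to rule out spurious contributions from the neighboring $n$-ary interval.
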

\begin{proof}
Any $n$-ary interval in $C_{k}$ is of the form $J^{(h)}=\frac{1}{n^{k}}\left(C_{0}+h\right)$
for some $h\in\mathbb{Z}.$ Let $J_{j}^{(h)}:=\frac{1}{n^{k}}\left(C_{j}+h\right).$
Then
\begin{align*}
\frac{1}{n^{k}}\left(C\cap\left(C+n^{k}\left(t-\left\lfloor t\right\rfloor _{k}\right)\right)\right)+\frac{h}{n^{k}} & =\bigcap_{j=1}^{\infty}\left(J_{j}^{(h)}\cap\left(J_{j}^{(h)}+\left(t-\left\lfloor t\right\rfloor _{k}\right)\right)\right)\text{ and}\\
\frac{1}{n^{k}}\left(C\cap\left(C-1+n^{k}\left(t-\left\lfloor t\right\rfloor _{k}\right)\right)\right)+\frac{h}{n^{k}} & =\bigcap_{j=1}^{\infty}\left(J_{j}^{(h)}\cap\left(J_{j}^{(h)}-\frac{1}{n^{k}}+\left(t-\left\lfloor t\right\rfloor _{k}\right)\right)\right)
\end{align*}
 Now $J^{(h)}\subseteq C_{k}$ implies $J_{j}^{(h)}\subset C_{k+j}$
for all $j,$ since $J_{j}^{(h)}$ is obtained from $J^{(h)}$ by
repeated refinement. 

According to Lemma \ref{Sec-2-Lem:empty-can-be-ignored}, only $n$-ary
intervals in $C_{k}$ that are in the interval case or the potential
intervals case can have points in common with $C\cap\left(C+t\right).$
By Lemma \ref{Sec-2-Lem:Interval_Count} there are $\mu_{t}(k)$ $n$-ary
intervals $J^{(h)}\subseteq C_{k}$ in the interval or the potential
interval case. 

Suppose $\sigma_{t}\left(k\right)=1$. Then $J^{(h)}$ is in the interval
case by Lemma \ref{Sec-2-Lem:Sigma-property}. Hence $J^{(h)}\subset C_{k}+\left\lfloor t\right\rfloor _{k}$
and therefore $J^{(h)}+\left(t-\left\lfloor t\right\rfloor _{k}\right)\subset C_{k}+t.$
By repeated refinement $J_{j}^{(h)}+\left(t-\left\lfloor t\right\rfloor _{k}\right)\subset C_{k+j}+t.$
Consequently, 
\begin{align*}
\bigcap_{j=1}^{\infty}\left(J_{j}^{(h)}\cap\left(J_{j}^{(h)}+\left(t-\left\lfloor t\right\rfloor _{k}\right)\right)\right) & \subseteq\bigcap_{j=1}^{\infty}\left(C_{k+j}\cap\left(C_{k+j}+t\right)\right)\\
 & =C\cap(C+t).
\end{align*}

Suppose $\sigma_{t}\left(k\right)=-1$. Then $J^{(h)}$ is in the
potential interval case by Lemma \ref{Sec-2-Lem:Sigma-property}.
Hence $J^{(h)}-\frac{1}{n^{k}}\subset C_{k}+\left\lfloor t\right\rfloor _{k}$
and therefore $J^{(h)}-\frac{1}{n^{k}}+\left(t-\left\lfloor t\right\rfloor _{k}\right)\subset C_{k}+t.$
By repeated refinement $J_{j}^{(h)}-\frac{1}{n^{k}}+\left(t-\left\lfloor t\right\rfloor _{k}\right)\subset C_{k+j}+t.$
Consequently, 
\begin{align*}
\bigcap_{j=1}^{\infty}\left(J_{j}^{(h)}\cap\left(J_{j}^{(h)}-\frac{1}{n^{k}}+\left(t-\left\lfloor t\right\rfloor _{k}\right)\right)\right) & \subseteq\bigcap_{j=1}^{\infty}\left(C_{k+j}\cap\left(C_{k+j}+t\right)\right)\\
 & =C\cap(C+t).
\end{align*}

Conversely, suppose $x\in C\cap\left(C+t\right)=\bigcap_{k=1}^{\infty}C_{k}\cap\left(C_{k}+t\right)$.
Let $k\in\mathbb{N}_{0}$ be arbitrary and $J^{(h)}\subset C_{k}$
denote the $n$-ary interval such that $x\in J^{(h)}$ for some $h$
by Lemma \ref{Sec-2-Lem:empty-can-be-ignored} and $J^{(h)}$ is in
interval or potential interval case. Let 
\[
I_{k}:=\left\{ i\mid J^{\left(i\right)}\subset C_{k}\text{ is \ensuremath{n}-ary and in the interval or potential interval case}\right\} .
\]
 If $J^{\left(j\right)}=J^{\left(i\right)}-\frac{1}{n^{k}}$ for some
$j,i\in I_{k}$ then either $J^{\left(i\right)}$ or $J^{\left(j\right)}$
is in both the interval and potential interval cases, which contradicts
that $\sigma_{t}\left(k\right)=\pm1$. Thus, any $J^{\left(i\right)}\subset C_{k}\cap\left(C_{k}+\left\lfloor t\right\rfloor _{k}\right)$
such that $i\neq j$ is at least a distance of $\frac{1}{n^{k}}$
from $J^{\left(j\right)}$ and $J^{\left(j\right)}\cap J^{\left(i\right)}=\varnothing$.

Suppose $\sigma_{t}\left(k\right)=1$ so that $J^{(i)}$ is in the
interval case for all $i\in I_{k}$. Then $C_{k}\cap\left(C_{k}+t\right)=\bigcup_{i\in I}J^{\left(i\right)}\cap\left(J^{\left(i\right)}+\left(t-\left\lfloor t\right\rfloor _{k}\right)\right)$
so that $x\in J^{\left(h\right)}\cap C_{k}\cap\left(C_{k}+t\right)=J^{\left(h\right)}\cap\left(J^{\left(h\right)}+\left(t-\left\lfloor t\right\rfloor _{k}\right)\right)$.
Furthermore, $J^{\left(h\right)}\cap C_{k+j}=J_{j}^{\left(h\right)}$
for each $j>0$ by construction of $C_{k}$ so that 
\[
x\in\bigcap_{j=1}^{\infty}\left(J^{\left(h\right)}\cap C_{k+j}\cap\left(C_{k+j}+t\right)\right)=\bigcap_{j=1}^{\infty}\left(J_{j}^{\left(h\right)}\cap\left(J_{j}^{\left(h\right)}+\left(t-\left\lfloor t\right\rfloor _{k}\right)\right)\right).
\]
Since $x$ is arbitrary, then $C\cap\left(C+t\right)$ is a subset
of the disjoint union
\[
\bigcup_{h\in I_{k}}\left(\bigcap_{j=1}^{\infty}\left(J_{j}^{\left(h\right)}\cap\left(J_{j}^{\left(h\right)}+\left(t-\left\lfloor t\right\rfloor _{k}\right)\right)\right)\right).
\]

The case $\sigma_{t}\left(k\right)=-1$ is obtained by replacing $J_{j}^{(h)}+\left(t-\left\lfloor t\right\rfloor _{k}\right)$
by $J_{j}^{(h)}-\frac{1}{n^{k}}+\left(t-\left\lfloor t\right\rfloor _{k}\right)$
above. This completes the proof.
\end{proof}
According to Lemma \ref{Sec-2-Lem:Interval_Count}, if $\sigma_{t}\left(j\right)=\pm1$
for all $j$ then $C_{k}\cap\left(C_{k}+\left\lfloor t\right\rfloor _{k}\right)$
is the disjoint union of $\mu_{t}\left(k\right)$ $n$-ary intervals
of length $\frac{1}{n^{k}}$. Using the definition $I_{k}$ from the
proof of Lemma \ref{Sec-3-Lem:Contained_(t-t_k)}, $C_{k}\cap\left(C_{k}+\left\lfloor t\right\rfloor _{k}\right)=\bigcup_{h\in I_{k}}J^{\left(h\right)}$
and, for each $h\in I_{k}$, the corresponding interval $J^{\left(h\right)}$
refines to a ``small'' Cantor set intersected with its real translate.
Lemma \ref{Sec-3-Lem:Contained_(t-t_k)} shows that, for each $k$,
the translation value directly depends on digits $t_{j}$ for $j>k$,
and the spacing of intervals $J^{\left(h\right)}$ depends on $\left\lfloor t\right\rfloor _{k}=0._{n}t_{1}t_{2}\ldots t_{k}$.
The requirement that $\sigma_{t}\left(k\right)=\pm1$ guarantees that
the intervals $J^{\left(h\right)}$ are disjoint. These results follow
from the analysis in Section \ref{Sec-2:Old-Stuff}.

The next few lemmas establish some properties of translation equivalence.
The first of these result allows us to calculate limits of sequences
of the form $C\cap\left(C+x_{j}\right).$ 

Let $\mathscr{H}^{s}(K)$ denote the $s$-dimensional Hausdorff measure
of a set $K.$ If $D$ is sparse and $0<\beta,y<\infty$ are arbitrary
real numbers, then the set 
\[
F_{\beta,y}:=\left\{ t\mid m^{-2\beta}y\le\mathscr{H}^{\beta\log_{n}\left(m\right)}\left(C\cap\left(C+t\right)\right)\le y\right\} 
\]
 is dense in $F$, see \cite{PePh11b}. Thus, the mapping $t\mapsto\mathscr{H}^{s}\left(C\cap\left(C+t\right)\right)$
is everywhere discontinuous on $F$. In general, if $\left\{ x_{j}\right\} _{j=0}^{\infty}$
is a sequence in $F^{+}$ which converges to $x$, then $\lim_{j\to\infty}\left(C\cap\left(C+x_{j}\right)\right)$
need not equal $C\cap\left(C+x\right)$, even when the limit exists
with respect to the Hausdorff metric. However, with suitable restrictions
on the sequence $\left\{ x_{j}\right\} $ we show that the sets $C\cap\left(C+x_{j}\right)$
do converge.
\begin{lem}
\label{Sec-3-Lem:Sequence of Sets}Let $C=C_{n,D}$ and $x_{0}$ be
given. Suppose $\left\{ x_{j}\right\} _{j=0}^{\infty}$ is a sequence
in $F^{+}$ converging to some real number $x$. If

\begin{minipage}[t]{1\columnwidth}%
\begin{enumerate}
\item $\sigma_{x_{j}}\left(k\right)=\pm1$ for all $j,k\in\mathbb{N}_{0}$,
\item for each $j$ there exists $c_{j}\in\mathbb{R}$ such that $C\cap\left(C+x_{j}\right)=\left(C\cap\left(C+x_{0}\right)\right)+c_{j}$
and 
\item the sequence $c_{j}$ converges to $c\in\mathbb{R}$,\medskip{}
\end{enumerate}
\end{minipage}

then $C\cap\left(C+x\right)=\left(C\cap\left(C+x_{0}\right)\right)+c$.\end{lem}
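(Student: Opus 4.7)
My plan is to prove the equality via two inclusions, leveraging the structural lemmas of Section~\ref{Sec-2:Old-Stuff}.

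\emph{The inclusion $(C\cap(C+x_{0}))+c\subseteq C\cap(C+x)$.} Given $y$ in the left hand side, set $y_{j}:=y-c+c_{j}$. Hypothesis (2) gives $y_{j}\in(C\cap(C+x_{0}))+c_{j}=C\cap(C+x_{j})$, so $y_{j}\in C$ and $y_{j}-x_{j}\in C$. By (3) and $x_{j}\to x$, $y_{j}\to y$ and $y_{j}-x_{j}\to y-x$; closedness of $C$ then yields $y,\,y-x\in C$, whence $y\in C\cap(C+x)$.

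\emph{The reverse inclusion.} Given $z\in C\cap(C+x)$, it suffices to produce indices $j_{k}\to\infty$ and points $y_{k}\in C\cap(C+x_{j_{k}})$ with $y_{k}\to z$: then $y_{k}-c_{j_{k}}\in C\cap(C+x_{0})$ by (2), the sequence converges to $z-c$, and since $C\cap(C+x_{0})$ is closed we obtain $z\in(C\cap(C+x_{0}))+c$. To build such $y_{k}$ I will work level-by-level with the $n$-ary interval decomposition. Assume first that $x$ has no finite $n$-ary representation (the other case is addressed below). Then $\lfloor x\rfloor_{k}<x<\lfloor x\rfloor_{k}+n^{-k}$ strictly for every $k$, so for $j$ sufficiently large (depending on $k$) $x_{j}$ lies in the same open interval, forcing $\lfloor x_{j}\rfloor_{k}=\lfloor x\rfloor_{k}$ and hence $\sigma_{x_{j}}(k)=\sigma_{x}(k)$. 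Condition (1) then forces $\sigma_{x}(k)=\pm 1$, ruling out the anomalous values $0$ and $i$. The point $z$ lies in some $n$-ary subinterval $J$ of $C_{k}$; since $z\in J\cap(C_{k}+x)$, Lemma~\ref{Sec-2-Lem:empty-can-be-ignored} rules out the potentially empty and empty cases for $J$, and digit agreement makes $J$ an interval or potential interval case simultaneously for $x$ and for $x_{j}$ when $j$ is large. Lemma~\ref{Sec-2-Lem:Nothing_Goes_Away} then delivers a point $y_{j,k}\in J\cap C\cap(C+x_{j})$, necessarily within $n^{-k}$ of $z$ since $J$ has length $n^{-k}$. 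Choosing $j_{k}\ge k$ sufficiently large and setting $y_{k}:=y_{j_{k},k}$ completes the construction.

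\emph{Main obstacle and edge case.} The principal difficulty is tracking the $n$-ary combinatorics (encoded by $\sigma$ and the interval/potential-interval classification) across the limit $x_{j}\to x$; this is exactly what the digit-stabilization argument above achieves, with condition (1) preventing $\sigma_{x}(k)$ from taking the anomalous values $0$ or $i$. The residual technicality is the case in which $x$ itself admits a finite $n$-ary representation: one passes to subsequences of $\{x_{j}\}$ approaching $x$ from a single side, uses the corresponding (possibly non-standard) $n$-ary representation of $x$ in the above argument, or invokes the explicit description of $C\cap(C+x)$ from Remark~\ref{Sec-2-Rem:Finite n-ary Representations}.
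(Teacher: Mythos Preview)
Your proof is correct and follows essentially the same strategy as the paper's: the first inclusion via closedness of $C$, and the reverse via digit-agreement at level $k$ combined with Lemma~\ref{Sec-2-Lem:Nothing_Goes_Away}. You are in fact slightly more careful than the paper, explicitly verifying that the containing interval $J$ lies in the interval or potential-interval case before invoking Lemma~\ref{Sec-2-Lem:Nothing_Goes_Away}, and explicitly flagging the edge case where $x$ admits a finite $n$-ary representation.
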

\begin{proof}
Note that $x\in F^{+}$ by compactness. Furthermore, the sequence
of compact sets $\left(C\cap\left(C+x_{0}\right)\right)+c_{j}$ converges
in the Hausdorff metric so that
\begin{align*}
\lim_{j\to\infty}\left(C\cap\left(C+x_{j}\right)\right) & =\left(C\cap\left(C+x_{0}\right)\right)+c.
\end{align*}

We must show that $C\cap\left(C+x\right)=\left(C\cap\left(C+x_{0}\right)\right)+c$.
The result is trivial if $x=x_{j}$ for some $j$, so suppose $x\neq x_{j}$
for all $j$.

Let $y\in\left(C\cap\left(C+x_{0}\right)\right)+c$ be arbitrary.
For each $j$, choose $y_{j}\in C\cap\left(C+x_{j}\right)$ such that
$y_{j}$ converges to $y$. Thus, $y_{j}$ is a sequence of $C$ so
that $y\in C$ and $\lim_{j\to\infty}\left\{ C+x_{j}\right\} =C+x$
converges in the hausdorff metric so that $y\in\left(C+x\right)$.
Hence $\left(C\cap\left(C+x_{0}\right)\right)+c\subseteq C\cap\left(C+x\right)$.

Let $y\in C\cap\left(C+x\right)$ be arbitrary. For each $j\in\mathbb{N}$,
choose $N_{j}\in\mathbb{N}$ such that $\left(\frac{1}{n}\right)^{N_{j}+1}\le\left|x-x_{j}\right|<\left(\frac{1}{n}\right)^{N_{j}}$.
Thus, $\left\lfloor x\right\rfloor _{N_{j}}=\left\lfloor x_{j}\right\rfloor _{N_{j}}$
and $C_{N_{j}}\cap\left(C_{N_{j}}+\left\lfloor x\right\rfloor _{N_{j}}\right)=C_{N_{j}}\cap\left(C_{N_{j}}+\left\lfloor x_{j}\right\rfloor _{N_{j}}\right)$.
Let $J$ be the $n$-ary interval of $C_{N_{j}}$ which contains $y$.
According to Lemma \ref{Sec-2-Lem:Nothing_Goes_Away}, $J$ contains
points of $C\cap\left(C+x_{j}\right)$ so choose $y_{j}\in J\cap C\cap\left(C+x_{j}\right)$.
Since $J$ has length $\left(\frac{1}{n}\right)^{N_{j}}$ then $\left|y-y_{j}\right|\le\left(\frac{1}{n}\right)^{N_{j}}$. 

Thus, we can construct a sequence $\left\{ y_{j}\right\} $ such that
$y_{j}\in C\cap\left(C+x_{j}\right)$ for each $j$. Since $x_{j}\rightarrow x$,
then $N_{j}\to\infty$ and $y_{j}$ converges to $y$. Hence, $y\in\lim_{j\to\infty}\left\{ C\cap\left(C+x_{j}\right)\right\} $
and $C\cap\left(C+x\right)=\left(C\cap\left(C+x_{0}\right)\right)+c$.\end{proof}
\begin{cor}
Let $\left\{ x_{j}\right\} _{j=0}^{\infty}$ be a sequence in $F^{+}$
such that $x_{j}$ converges to $x$, $\sigma_{x_{j}}\left(k\right)=\pm1$
for all $k$, $C\cap\left(C+x_{j}\right)=\left(C\cap\left(C+x_{0}\right)\right)+c_{j}$
for each $j$, and the sequence $c_{j}$ converges to $c$. Then $c_{j}\in F$
for all $j\in\mathbb{N}_{0}$.\end{cor}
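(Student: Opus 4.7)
The plan is to verify directly that $C \cap (C + c_j)$ is nonempty for every $j$, which by definition of $F$ is exactly the statement $c_j \in F$. The key observation is that the elaborate hypotheses of the lemma (the $\pm 1$ condition on $\sigma_{x_j}$, the convergence of $x_j$ and $c_j$) are not actually needed for this corollary: it follows from nothing more than $x_0 \in F^{+}$ together with the translation identity $C \cap (C + x_j) = (C \cap (C + x_0)) + c_j$.

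First I would fix $j \in \mathbb{N}_0$ and pick any $z \in C \cap (C + x_0)$, which is possible because $x_0 \in F^{+}$ means exactly that $C \cap (C + x_0) \neq \varnothing$. By the hypothesis, the translated point $z + c_j$ lies in $(C \cap (C + x_0)) + c_j = C \cap (C + x_j)$, and in particular $z + c_j \in C$.

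Next I would note that since $z \in C \cap (C + x_0) \subseteq C$, we also have $(z + c_j) - c_j = z \in C$, so $z + c_j \in C + c_j$. Combining the two memberships yields $z + c_j \in C \cap (C + c_j)$, so this intersection is nonempty and $c_j \in F$, as required.

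There is no real obstacle here; the argument is a one-line set-theoretic observation. The only mild subtlety is that the conclusion is $c_j \in F$ rather than $c_j \in F^{+}$, so I do not need to worry about the sign of $c_j$ (the symmetry $F = (-F^{+}) \cup F^{+}$ accommodates either case automatically). The corollary is placed after the lemma because it uses the same set-up, but logically it depends only on the bare translation equality that is assumed.
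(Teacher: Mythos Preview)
Your proof is correct and is essentially the same as the paper's: both pick a witness in the nonempty intersection (you take $z+c_j$ with $z\in C\cap(C+x_0)$, the paper takes $y\in C\cap(C+x_j)$, which is the same point) and observe it lies in $C\cap(C+c_j)$. The paper additionally notes that $c\in F$ by compactness, but this is not part of the stated corollary.
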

\begin{proof}
Let $j$ be arbitrary. Then $C\cap\left(C+x_{j}\right)=\left(C+c_{j}\right)\cap\left(C+x_{0}+c_{j}\right)$
so that any element $y\in C\cap\left(C+x_{j}\right)$ is contained
in both $C$ and $\left(C+c_{j}\right)$. Thus $c_{j}\in F$ for all
$j$ and $c\in F$ by compactness. 
\end{proof}
We now show that when $t$ is rational with period $p$, then $\sigma_{t}$
is also periodic with period $p$ or $2p$.
\begin{lem}
\label{Sec-3-Lem:Sigma_period_q}Let $C_{n,D}$ be given. Suppose
$t\in F^{+}$ does not admit finite $n$-ary representation, $\sigma_{t}\left(k\right)=\pm1$
for all $k\in\mathbb{N}$, and $t=0._{n}t_{1}\cdots t_{k}\overline{t_{k+1}\cdots t_{k+p}}$
for some integer $k\ge0$ and period $p$. Then $\sigma_{t}$ has
period $p$ or $2p$.\end{lem}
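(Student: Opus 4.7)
The plan is to reduce the problem to iteration of a single map on the two-element set $\{\pm 1\}$. Under the hypothesis that $\sigma_{t}(j)\in\{\pm 1\}$ for every $j$, the recursion $\sigma_{t}(j+1)=\xi(\sigma_{t}(j),t_{j+1})\cdot\sigma_{t}(j)$ defines a well-defined update $\Phi:\{\pm 1\}\times\{0,\ldots,n-1\}\to\{\pm 1\}$ via $\Phi(\epsilon,d)=\xi(\epsilon,d)\cdot\epsilon$. For every $j\ge k$, let $T_{j}:\{\pm 1\}\to\{\pm 1\}$ be the map obtained by iterating $\Phi$ across a full block of digits $t_{j+1},t_{j+2},\ldots,t_{j+p}$, so that $\sigma_{t}(j+p)=T_{j}(\sigma_{t}(j))$. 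The eventual $p$-periodicity of the digit sequence implies $T_{j+p}=T_{j}$ for every $j\ge k$.

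The key step is the following propagation claim: if $\sigma_{t}(j_{0}+p)=\sigma_{t}(j_{0})$ for some $j_{0}\ge k$, then $\sigma_{t}(j+p)=\sigma_{t}(j)$ for every $j\ge j_{0}$. I would prove this by a one-line induction on $j-j_{0}$: given the inductive hypothesis, $\sigma_{t}(j+1+p)=\Phi(\sigma_{t}(j+p),t_{j+1+p})=\Phi(\sigma_{t}(j),t_{j+1})=\sigma_{t}(j+1)$, where the digit periodicity $t_{j+1+p}=t_{j+1}$ is valid because $j+1>k$.

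With this in hand, the conclusion follows from a binary dichotomy. Either there exists $j_{0}\ge k$ with $\sigma_{t}(j_{0}+p)=\sigma_{t}(j_{0})$, in which case the propagation claim makes $\sigma_{t}$ eventually periodic with period $p$; or for every $j\ge k$ we have $\sigma_{t}(j+p)\neq\sigma_{t}(j)$, which forces $\sigma_{t}(j+p)=-\sigma_{t}(j)$ because the only alternative value in $\{\pm 1\}$ is the negative, and iterating this once more gives $\sigma_{t}(j+2p)=-\sigma_{t}(j+p)=\sigma_{t}(j)$ for every $j\ge k$, so $\sigma_{t}$ is eventually periodic with period $2p$.

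There is no substantial obstacle here: the argument is essentially the observation that a deterministic dynamical system on a two-element set has orbit period dividing $2$, and this is exactly what produces the factor-of-two ambiguity $p$ versus $2p$ in the statement. The only point that requires a moment's attention is that the hypothesis $\sigma_{t}(j)=\pm 1$ for all $j$ is precisely what confines the iteration to $\{\pm 1\}$ rather than the larger codomain $\{0,\pm 1,i\}$ of $\sigma_{t}$ in general; this is where sparsity of $D$ (via Lemma~\ref{Sec-2-Lem:Sparcity_Requirement}) or the local assumption in the statement is used.
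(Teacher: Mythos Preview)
Your proposal is correct and follows essentially the same approach as the paper: the propagation claim you isolate is exactly the induction the paper carries out, and your binary dichotomy on whether some $j_{0}\ge k$ satisfies $\sigma_{t}(j_{0}+p)=\sigma_{t}(j_{0})$ is a slightly cleaner packaging of the paper's case split at $j_{0}=k+1$ and then $j_{0}=k+p+1$. The dynamical-system framing on $\{\pm 1\}$ is a nice way to see why the factor of two arises, but the substance of the argument is the same.
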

\begin{proof}
Suppose $\sigma_{t}\left(k+1\right)=\sigma_{t}\left(k+p+1\right)$.
By induction, for any $j>k$, 
\begin{align*}
\sigma_{t}\left(j+p+1\right) & =\xi\left(\sigma_{t}\left(j+p\right),t_{j+p+1}\right)\cdot\sigma_{t}\left(j+p\right)\\
 & =\xi\left(\sigma_{t}\left(j\right),t_{j+1}\right)\cdot\sigma_{t}\left(j\right)\\
 & =\sigma_{t}\left(j+1\right).
\end{align*}

Therefore, $\sigma_{t}\left(j\right)=\sigma_{t}\left(j+p\right)$
for all $j>k$ and $\sigma_{t}\left(k\right)$ has period $p$. 

Suppose $\sigma_{t}\left(k+1\right)=-\sigma_{t}\left(k+p+1\right)$.
If $\sigma_{t}\left(k+p+1\right)=\sigma_{t}\left(k+2p+1\right)$ then
$\sigma_{t}\left(j\right)=\sigma_{t}\left(j+p\right)$ for $j>k+p$
and $\sigma_{t}$ has period $p$ by the argument above. Otherwise,
$\sigma_{t}\left(k+1\right)=\sigma_{t}\left(k+2p+1\right)$. By induction,
for any $j>k$, 
\begin{align*}
\sigma_{t}\left(j+2p+1\right) & =\xi\left(\sigma_{t}\left(j+2p\right),t_{j+2p+1}\right)\cdot\sigma_{t}\left(j+2p\right)\\
 & =\xi\left(\sigma_{t}\left(j\right),t_{j+1}\right)\cdot\sigma_{t}\left(j\right)\\
 & =\sigma_{t}\left(j+1\right).
\end{align*}
Hence, $\sigma_{t}\left(j\right)$ has period $2p$.
\end{proof}
It follows from the next lemma that, if $D$ is sparse, then any $t$
in $F^{+}$ is translation equivalent to an $s$ in $F^{+}$ such
that $\sigma_{s}(k)=1$ for all $k.$ That is, all intervals in $C_{k}\cap\left(C_{k}+\left\lfloor s\right\rfloor _{k}\right)$
are in the interval case. We need Lemma \ref{Sec-3-Lem:Sigma_period_q}
to show that, if $t$ is rational, then the $s$ we construct is also
rational. 
\begin{lem}
\label{Sec-3-Lem:Constructing y Equiv t With Sigma_y=00003D1}Let
$C=C_{n,D}$ be given. Suppose $t\in F^{+}$ does not admit finite
$n$-ary representation and $\sigma_{t}\left(k\right)=\pm1$ for all
$k\in\mathbb{N}$. Then there exists $y\in F^{+}$ such that $\sigma_{y}\left(k\right)=1$
for all $k\in\mathbb{N}$ and $C\cap\left(C+t\right)=\left(C\cap\left(C+y\right)\right)+c$
for some $c\in\mathbb{R}$. If $t$ is rational, then $y$ is also
rational.\end{lem}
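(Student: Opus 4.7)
If $\sigma_t(k) = 1$ for every $k$, take $y = t$ and $c = 0$. Otherwise, the plan is to construct a sequence $t = t^{(0)}, t^{(1)}, t^{(2)}, \ldots$, each in $F^+$ and translation equivalent to $t$, such that the minimal index where $\sigma_{t^{(N)}} = -1$ strictly increases with $N$; then take $y$ to be the limit of the $t^{(N)}$, whose digits stabilize.

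One iteration $t^{(N)} \mapsto t^{(N+1)}$ proceeds at the minimal index $k$ for which $\sigma_{t^{(N)}}(k) = -1$. Applying Lemma \ref{Sec-3-Lem:Contained_(t-t_k)} at level $k$, $C \cap (C + t^{(N)})$ is a disjoint union of $\mu_{t^{(N)}}(k)$ scaled translates of $C \cap (C - 1 + s)$, where $s = n^k(t^{(N)} - \lfloor t^{(N)} \rfloor_k)$, and this is, up to an overall shift, $\frac{1}{n^k}(C \cap (C + (1-s)))$. Since the $n$-ary digits of $1-s$ are $(n-1-t^{(N)}_{k+j})_{j \geq 1}$, I define $t^{(N+1)}$ to agree with $t^{(N)}$ in its first $k-1$ digits, to have $k$-th digit equal to $t^{(N)}_k + 1$, and to have subsequent digits $(n-1) - t^{(N)}_j$. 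Sparsity of $D$ makes this valid: from $\sigma_{t^{(N)}}(k) = -1$ and $\sigma_{t^{(N)}}(k-1) = 1$ one has $t^{(N)}_k \in (\Delta - 1) \setminus \Delta$, so $t^{(N)}_k + 1 \in \Delta$, and because consecutive elements of $\Delta$ differ by at least $2$ we get $t^{(N)}_k + 2 \notin \Delta$, so $t^{(N+1)}_k \in \Delta \setminus (\Delta - 1)$ and $\xi(1, t^{(N+1)}_k) = 1$. Thus $\sigma_{t^{(N+1)}}(j) = 1$ for all $j \le k$. Applying Lemma \ref{Sec-3-Lem:Contained_(t-t_k)} in the $\sigma = 1$ case to $t^{(N+1)}$ at level $k$ yields $C \cap (C + t^{(N+1)})$ as $\mu_{t^{(N+1)}}(k)$ translates of $\frac{1}{n^k}(C \cap (C+(1-s)))$, and a direct $\Delta$-count shows $\mu_{t^{(N+1)}}(k) = \mu_{t^{(N)}}(k)$, so $C \cap (C + t^{(N+1)}) = (C \cap (C + t^{(N)})) + c^{(N+1)}$ for some $c^{(N+1)} \in \mathbb{R}$.

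Iterating, each $t^{(N+1)}$ preserves the first $k_{N-1}$ digits of $t^{(N)}$ (where $k_{N-1}$ is the flip position at step $N$), so the digits of $t^{(N)}$ stabilize; let $y$ be this pointwise limit. Then $y \in F^+$, $\sigma_y(k) = 1$ for every $k$ (via Lemma \ref{Sec-2-Lem:Sparcity_Requirement}), and the cumulative shifts $c^{(N)}$ form a Cauchy sequence because $|c^{(N+1)} - c^{(N)}|$ is on the order of $1/n^{k_N} \to 0$. Applying Lemma \ref{Sec-3-Lem:Sequence of Sets} to $t^{(N)} \to y$ gives $C \cap (C + y) = (C \cap (C + t)) + c$ with $c = \lim c^{(N)}$. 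For the rational case, Lemma \ref{Sec-3-Lem:Sigma_period_q} says that $\sigma_t$ is eventually periodic; the increment-and-complement rule is driven by a bounded-state machine reading the $\sigma$-sequence, so an eventually-periodic $\sigma$ produces eventually-periodic $y$-digits, and $y$ is rational.

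The main obstacle will be the careful verification that the increment-and-complement step yields a \emph{single} translate: one must confirm both $\mu_{t^{(N+1)}}(k) = \mu_{t^{(N)}}(k)$ and that the positions of the $\mu$ scaled pieces in $C \cap (C + t^{(N+1)})$ are a uniform translate of those in $C \cap (C + t^{(N)})$, through the sparsity-induced bijection between the relevant "$d - t^{(N)}_k$" and "$d - t^{(N+1)}_k$" intersection counts; the rational case additionally requires checking that the state-machine producing the $y$-digits has eventually-periodic output on eventually-periodic input, which is routine once the transformation is made explicit.
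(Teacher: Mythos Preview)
Your approach is essentially the paper's: iteratively flip at the first index where $\sigma=-1$ by incrementing that digit and complementing all later digits, verify each step is a single translation, pass to the limit via Lemma~\ref{Sec-3-Lem:Sequence of Sets}, and handle the rational case through Lemma~\ref{Sec-3-Lem:Sigma_period_q}. The paper goes one step further and records the explicit closed form $y_j\in\{t_j,\ t_j+1,\ n-1-t_j,\ n-t_j\}$ determined by the pair $(\sigma_t(j-1),\sigma_t(j))$; with that formula in hand your ``bounded-state machine'' paragraph becomes a one-line observation, and the ``uniform translate'' obstacle is dispatched by noting that the increment $\lfloor t^{(N+1)}\rfloor_k=\lfloor t^{(N)}\rfloor_k+n^{-k}$ turns each potential-interval case in $C_k\cap(C_k+\lfloor t^{(N)}\rfloor_k)$ into an interval case for $\lfloor t^{(N+1)}\rfloor_k$, so the two collections of $\mu(k)$ pieces coincide as sets.

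One point to watch: you invoke sparsity of $D$ twice (to force $t^{(N+1)}_k+1\notin\Delta$, and by citing Lemma~\ref{Sec-2-Lem:Sparcity_Requirement} for $\sigma_y\equiv 1$), but the lemma as stated does \emph{not} assume $D$ sparse---only that $\sigma_t(k)=\pm1$ for all $k$. The paper's proof does not invoke sparsity: for $j>k_{i+1}$ it tracks $\sigma_{x_{i+1}}(j)=-\sigma_{x_i}(j)$ via the symmetries $h\in\Delta\Leftrightarrow n-1-h\in n-\Delta-1$ (and the three companion equivalences) built into $\xi$, and it obtains $\sigma_y(k)=1$ not from Lemma~\ref{Sec-2-Lem:Sparcity_Requirement} but from digit stabilization---for each $k$ some $t^{(N)}$ already agrees with $y$ on more than $k$ digits and has $\sigma_{t^{(N)}}(j)=1$ for $j\le k$. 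Your citation of Lemma~\ref{Sec-2-Lem:Sparcity_Requirement} is in any case misplaced (that lemma characterizes sparsity; it does not compute $\sigma_y$); replace both sparsity appeals by these direct arguments and your proposal proves the lemma in the stated generality.
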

\begin{proof}
Let $t\in F$ be arbitrary. For a real sequence $\left\{ x_{i}\right\} _{i=0}^{\infty}$,
let $x_{i}:=0._{n}x_{i1}x_{i2}\ldots$ denote the $n$-ary representation.
Let $x_{0}:=t$ so that $x_{0j}=t_{j}$ and $\sigma_{x_{0}}\left(j\right)=\pm1$
for all $j\in\mathbb{N}$.

We will construct sequences $\left\{ x_{i}\right\} _{i=0}^{\infty}$
and $\left\{ \sum_{j=0}^{i}c_{k_{j}}\right\} _{i=0}^{\infty}$ which
satisfy Lemma \ref{Sec-3-Lem:Sequence of Sets} and then show that
$y:=\lim_{i\to\infty}x_{i}$ satisfies our conditions. Let $c_{k_{0}}:=0$
so that $C\cap\left(C+x_{0}\right)=\left(C\cap\left(C+t\right)\right)+c_{k_{0}}$
and the translation condition is true for $i=0$. Suppose $C\cap\left(C+x_{i}\right)=\left(C\cap\left(C+x_{0}\right)\right)+\sum_{j=0}^{i}c_{k_{j}}$
for some $i\in\mathbb{N}_{0}$ and $\sigma_{x_{i}}\left(j\right)=\pm1$
for all $j\in\mathbb{N}_{0}$.

Let $P_{i}:=\left\{ h\mid\sigma_{x_{i}}\left(h\right)=-1\right\} $
be a subset of $\mathbb{N}$. By assumption, $P_{i}$ is empty iff
$\sigma_{x_{i}}\left(k\right)=1$ for all $k$ and we can choose $y:=x_{i}$. 

Suppose $P_{i}$ is nonempty and let $k_{i+1}\in P_{i}$ be the minimal
element. Thus, $\sigma_{x_{i}}\left(k_{i+1}-1\right)=1$ so that $x_{i,k_{i+1}}\in\Delta-1$
by definition of $\sigma$. Therefore, $C\cap\left(C+t\right)$ consists
of $\mu_{x_{i}}\left(k_{i+1}\right)$ copies of $\frac{1}{n^{k_{i+1}}}\left(C\cap\left(C-1+n^{k_{i+1}}\left(x_{i}-\left\lfloor x_{i}\right\rfloor _{k_{i+1}}\right)\right)\right)$
by Lemma \ref{Sec-3-Lem:Contained_(t-t_k)}. Since $C\cap\left(C-1+z\right)=\left(C\cap\left(C+1-z\right)\right)-\left(1-z\right)$
for any real $z$, then 
\begin{align*}
 & \frac{1}{n^{k_{i+1}}}\left(C\cap\left(C-1+n^{k_{i+1}}\left(x_{i}-\left\lfloor x_{i}\right\rfloor _{k_{i+1}}\right)\right)\right)\\
 & \qquad=\frac{1}{n^{k_{i+1}}}\left(C\cap\left(C+1-n^{k_{i+1}}\left(x_{i}-\left\lfloor x_{i}\right\rfloor _{k_{i+1}}\right)\right)\right)-c_{k_{i+1}}\\
 & \qquad=\frac{1}{n^{k_{i+1}}}\left(C\cap\left(C+n^{k_{i+1}}\left(\sum_{j=1+k_{i+1}}^{\infty}\frac{n-1-x_{ij}}{n^{j}}\right)\right)\right)-c_{k_{i+1}}.
\end{align*}
where $c_{k_{i+1}}:=\frac{1}{n^{k_{i+1}}}\left(1-n^{k_{i+1}}\left(x_{i}-\left\lfloor x_{i}\right\rfloor _{k_{i+1}}\right)\right)$.
Since $0\le n^{k_{i+1}}\left(x_{i}-\left\lfloor x_{i}\right\rfloor _{k_{i+1}}\right)\le1$,
then $0\le c_{k_{i+1}}\le\frac{1}{n^{k_{i+1}}}$. Choose $x_{i+1}$
such that 
\[
x_{\left(i+1\right)j}=\begin{cases}
x_{ij} & \text{ for }1\le j\le k_{i+1}-1\\
x_{ij}+1 & \text{ for }j=k_{i+1}\\
n-1-x_{ij} & \text{ for }j>k_{i+1}.
\end{cases}
\]
Thus, $\sigma_{x_{i+1}}\left(j\right)=\sigma_{x_{i}}\left(j\right)=1$
for all $1\le j<k_{i+1}$. It is by definition of $\sigma$ that $\sigma_{x_{i+1}}\left(k_{i+1}\right)=1$
since $x_{\left(i+1\right)k_{i+1}}\in\Delta$. Also, $\sigma_{x_{i+1}}\left(j\right)=\pm1$
for any $j>k_{i+1}$ since $\sigma_{x_{i}}\left(j\right)=\pm1$ by
assumption and
\begin{alignat*}{2}
x_{\left(i+1\right)j}\in & \,\Delta & \text{ iff }x_{ij}\in & \, n-\Delta-1\\
x_{\left(i+1\right)j}\in & \,\Delta-1 & \text{ iff }x_{ij}\in & \, n-\Delta\\
x_{\left(i+1\right)j}\in & \, n-\Delta & \text{ iff }x_{ij}\in & \,\Delta-1\\
x_{\left(i+1\right)j}\in & \, n-\Delta-1 & \text{ iff }x_{ij}\in & \,\Delta.
\end{alignat*}

In particular, $\sigma_{x_{i+1}}\left(j\right)=-\sigma_{x_{i}}\left(j\right)$
for any $j\ge k_{i+1}$. Therefore, $C\cap\left(C+x_{i+1}\right)$
is not empty by Lemma \ref{Sec-2-Lem:Nothing_Goes_Away} so that $x_{i+1}\in F^{+}$.
Since each potential interval $J\subset C_{k_{i+1}}\cap\left(C_{k_{i+1}}+\left\lfloor x_{i}\right\rfloor _{k_{i+1}}\right)$
is an interval case in $C_{k_{i+1}}+\left\lfloor x_{i+1}\right\rfloor _{k_{i+1}}$
by Lemma \ref{Sec-3-Lem:Contained_(t-t_k)} then $C\cap\left(C+x_{i}\right)=\left(C\cap\left(C+x_{i+1}\right)\right)-c_{k_{i+1}}$.
Hence,
\[
C\cap\left(C+x_{i+1}\right)=\left(C\cap\left(C+x_{0}\right)\right)+\sum_{j=0}^{i+1}c_{k_{j}}.
\]
By induction, $\left\{ x_{i}\right\} $ is a sequence in $F^{+}$
such that $C\cap\left(C+x_{i}\right)=\left(C\cap\left(C+x_{0}\right)\right)+\sum_{j=0}^{i}c_{k_{j}}$
and $\sigma_{x_{i}}\left(j\right)=\pm1$ for all $i,j\in\mathbb{N}_{0}$.

By construction, $0\le c_{k_{i}}\le\frac{1}{n^{k_{i}}}$ and $c_{k_{0}}=0$
so that $\sum_{j=0}^{i}c_{k_{j}}\le\sum_{j=1}^{i}\frac{1}{n^{k_{j}}}\le\sum_{j=1}^{k_{i}}\frac{1}{n^{j}}\le\frac{1}{n-1}$
for all $i\in\mathbb{N}_{0}$. Since the sequence $\left\{ \sum_{j=0}^{i}c_{k_{j}}\right\} $
is increasing and bounded above, let $c:=\lim_{j\to\infty}\left\{ \sum_{j=0}^{i}c_{k_{j}}\right\} $.

Let $\varepsilon>0$ be given. Choose $N\in\mathbb{N}$ such that
$\varepsilon>\left(\frac{1}{n}\right)^{k_{N}}>0$ and let $N\le i<j$.
Since $x_{i}$ and $x_{j}$ have been constructed so that the first
$k_{i}$ digits are equal, then $\left|x_{i}-x_{j}\right|\le\left(\frac{1}{n}\right)^{k_{i}}<\varepsilon$.
Therefore, $\left\{ x_{i}\right\} $ is a Cauchy sequence of $F^{+}$
and $y:=\lim_{i\to\infty}\left(x_{i}\right)$ is also in $F^{+}$.
By construction, $y=0._{n}y_{1}y_{2}\ldots$ is the unique value such
that
\begin{equation}
y_{j}=\begin{cases}
t_{j} & \text{ if }\sigma_{t}\left(j-1\right)=1\mbox{\text{ and }}\sigma_{t}\left(j\right)=1\\
t_{j}+1 & \text{ if }\sigma_{t}\left(j-1\right)=1\text{ and }\sigma_{t}\left(j\right)=-1\\
n-1-t_{j} & \text{ if }\sigma_{t}\left(j-1\right)=-1\text{ and }\sigma_{t}\left(j\right)=-1\\
n-t_{j} & \text{ if }\sigma_{t}\left(j-1\right)=-1\mbox{\text{ and }}\sigma_{t}\left(j\right)=1.
\end{cases}\label{Sec-3-Eq:Psi defined}
\end{equation}

Hence, the sequence $\left\{ x_{i}\right\} $ satisfies the conditions
of Lemma \ref{Sec-3-Lem:Sequence of Sets} so that 
\[
C\cap\left(C+t\right)=\left(C\cap\left(C+y\right)\right)-c.
\]
Furthermore, for each $i\in\mathbb{N}$ there exists $k_{j}>i$ such
that $y$ shares the first $k_{j}$ digits of $x_{j}$ and $\sigma_{y}\left(h\right)=\sigma_{x_{j}}\left(h\right)=1$
for all $0\le h\le k_{j}$. Thus $\sigma_{y}\left(h\right)=1$ for
all $h\in\mathbb{N}$.

It remains to show that $y$ is rational whenever $t$ is rational.
Suppose $k\ge p>0$ and $t=0._{n}t_{1}\cdots t_{k-p}\overline{t_{k-p+1}\cdots t_{k}}$.
Let $q$ denote a period of $\sigma_{t}\left(k\right)$ by Lemma \ref{Sec-3-Lem:Sigma_period_q}.
Since $t_{j+q}=t_{j}$ and $\sigma_{t}\left(j+q\right)=\sigma_{t}\left(j\right)$
for any $j>k$, then $y_{j}=y_{j+q}$ by equation (\ref{Sec-3-Eq:Psi defined})
so that $y$ has period $q$.\end{proof}
\begin{rem}
Define the function $\psi:\left[0,1\right]\to\left[0,1\right]$ according
to equation (\ref{Sec-3-Eq:Psi defined}) so that $\psi\left(t\right)=y$.
For example, if $D=\left\{ 0,2,7,9\right\} $, $n=10$, and $t=0._{n}54\overline{4728}$,
then $\psi\left(t\right)=0._{n}55\overline{5272}$. In example \ref{Sec-4-Ex:Irrational_Self_Similar},
we choose $D$, $n$, and $t,t'\in F$ such that $C\left(t\right)=C\left(t'\right)$,
yet $\psi\left(t\right)\neq\psi\left(t'\right)$.
\end{rem}

\subsection{Proof of Theorem \ref{Sec-1-Thm:Structure-when-t-is-rational}.}

We have now developed the machinery necessary to prove the first half
of Theorem \ref{Sec-1-Thm:Rational_tau}. In fact, Theorem \ref{Sec-1-Thm:Structure-when-t-is-rational}
is a special case of the following result. 
\begin{thm}
\label{Sec-3-Thm:Rational_tau}Let $C_{n,D}$ be given and $z\in F$
be arbitrary. Suppose there exists $t\in F^{+}$ such that $C\left(z\right)=C\left(t\right)$,
$t=0._{n}t_{1}\cdots t_{k-p}\overline{t_{k-p+1}\cdots t_{k}}$ for
some period $p$ and integer $k\ge p$, and $\sigma_{t}\left(j\right)=\pm1$
for all $j\in\mathbb{N}_{0}$. If $q$ denotes a period of $\sigma_{t}\left(j\right)$
then there exists a digits set $E=\left\{ 0\le e_{1}<e_{2}<\cdots<e_{r}<n^{q}\right\} $
and corresponding deleted digits Cantor set $B=C_{n^{q},E}$ such
that $C\cap\left(C+t\right)$ consists of $\mu_{t}\left(k\right)$
disjoint copies of $\frac{1}{n^{k}}B$. If $D$ is sparse then $E$
is also sparse.\end{thm}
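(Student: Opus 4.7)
The plan is to apply Lemma~\ref{Sec-3-Lem:Contained_(t-t_k)} at two scales. Without loss of generality assume $\sigma_{t}\equiv 1$, which can be arranged by invoking Lemma~\ref{Sec-3-Lem:Constructing y Equiv t With Sigma_y=00003D1} to replace $t$ by a translation-equivalent rational $y$ with $\sigma_{y}\equiv 1$; since $C(t)=C(y)$, any decomposition for $y$ transfers to $t$. Set $s:=n^{k}(t-\lfloor t\rfloor_{k})=0._{n}t_{k+1}t_{k+2}\cdots$. Because $k\geq p$ and $t$ has period $p$ starting at position $k-p+1$, $s$ reads the periodic block cyclically and is therefore purely periodic of period $p$, hence also of period $q$. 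Applying Lemma~\ref{Sec-3-Lem:Contained_(t-t_k)} writes $C\cap(C+t)$ as the disjoint union of $\mu_{t}(k)$ copies of $\frac{1}{n^{k}}\hat{B}$, where $\hat{B}:=C\cap(C+s)$.

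Since $\sigma_{t}\equiv 1$, induction on the recursion for $\sigma$ yields $\sigma_{s}(j)=\sigma_{t}(k+j)=1$ for all $j\geq 0$. Combined with $n^{q}(s-\lfloor s\rfloor_{q})=s$ (from $q$-periodicity of $s$), a second application of Lemma~\ref{Sec-3-Lem:Contained_(t-t_k)} to $\hat{B}$ at level $q$ gives
\begin{equation*}
\hat{B}=\bigsqcup_{h\in E}\left(\frac{1}{n^{q}}\hat{B}+\frac{h}{n^{q}}\right),
\end{equation*}
where $E\subseteq\{0,1,\ldots,n^{q}-1\}$ consists of the integer offsets of the interval-case $n$-ary intervals of $C_{q}\cap(C_{q}+\lfloor s\rfloor_{q})$. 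This is precisely the attractor equation for the IFS $\{x\mapsto(x+h)/n^{q}:h\in E\}$; its images $\frac{1}{n^{q}}[h,h+1]$ are interior-disjoint, so the open set condition holds and uniqueness of the invariant attractor gives $\hat{B}=C_{n^{q},E}=:B$. Hence $C\cap(C+t)$ consists of $\mu_{t}(k)$ disjoint copies of $\frac{1}{n^{k}}B$.

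Finally, suppose $D$ is sparse. Define $\tilde{D}:=\{\sum_{i=1}^{q}x_{i}n^{q-i}:x_{i}\in D\}\subseteq\{0,\ldots,n^{q}-1\}$; since every $h\in E$ corresponds to an $n$-ary interval contained in $C_{q}$, we have $E\subseteq\tilde{D}$. For distinct $\alpha,\alpha'\in\tilde{D}-\tilde{D}=\{\sum_{i=1}^{q}\delta_{i}n^{q-i}:\delta_{i}\in\Delta\}$, sparseness of $\Delta$ makes these representations unique; letting $j$ be the smallest index where they disagree, $|\delta_{j}-\delta'_{j}|\geq 2$ by sparseness of $D$, while $\bigl|\sum_{i>j}(\delta_{i}-\delta'_{i})n^{q-i}\bigr|\leq 2(n-1)\sum_{i>j}n^{q-i}=2(n^{q-j}-1)$, giving $|\alpha-\alpha'|\geq 2n^{q-j}-2(n^{q-j}-1)=2$. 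Thus $\tilde{D}$ is sparse, and so is the subset $E$. The main technical obstacle is the reduction to $\sigma_{t}\equiv 1$: verifying that Lemma~\ref{Sec-3-Lem:Constructing y Equiv t With Sigma_y=00003D1} yields a rational $y$ whose $\sigma$-period matches $q$ and for which the copy count $\mu_{y}$ at the appropriate level equals $\mu_{t}(k)$ requires careful tracking through the flipping construction in that lemma's proof; alternatively one may handle the case $\sigma_{t}(k)=-1$ directly by observing $C\cap(C-1+s)=(C\cap(C+(1-s)))-(1-s)$ and rerunning the above argument with $s$ replaced by $1-s$, which is also purely $p$-periodic.
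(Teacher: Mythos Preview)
Your argument is essentially the paper's: reduce to $\sigma\equiv 1$ via Lemma~\ref{Sec-3-Lem:Constructing y Equiv t With Sigma_y=00003D1}, apply Lemma~\ref{Sec-3-Lem:Contained_(t-t_k)} at level $k$, establish that the purely periodic tail intersection $\hat B$ equals $C_{n^q,E}$ (you do this by a second application of Lemma~\ref{Sec-3-Lem:Contained_(t-t_k)} exploiting $n^{q}(s-\lfloor s\rfloor_q)=s$, whereas the paper writes out $C\cap(C+x)=\bigcup_{\boldsymbol u\in D^{q}\cap(D^{q}+\boldsymbol w)}S_{\boldsymbol u}(C\cap(C+x))$ explicitly; both yield the same $E$), and finish with the same sparsity estimate on $\Delta$-digit expansions. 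The concern you flag about the reduction is handled in the paper just as you anticipate---equation~(\ref{Sec-3-Eq:Psi defined}) together with Lemma~\ref{Sec-3-Lem:Sigma_period_q} gives $y=\psi(t)$ period $q$ with pre-period at most $k$---though the paper, like you, writes $\mu_y(k)$ without explicitly reconciling it with $\mu_t(k)$.
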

\begin{proof}
Let $y:=\psi\left(t\right)\in F^{+}$ according to Lemma \ref{Sec-3-Lem:Constructing y Equiv t With Sigma_y=00003D1}
so that $y=0._{n}y_{1}\cdots y_{k}\overline{x_{1}x_{2}\cdots x_{q}}$
does not admit finite $n$-ary representation, $\sigma_{y}\left(j\right)=1$
for all $j$, and $C\cap\left(C+t\right)=\left(C\cap\left(C+y\right)\right)+c$
for some $c\in\mathbb{R}$. Define $x:=n^{k}\left(y-\left\lfloor y\right\rfloor _{k}\right)=0._{n}\overline{x_{1}\cdots x_{q}}$
so that $C\cap\left(C+y\right)$ consists of $\mu_{y}\left(k\right)$
disjoint copies of $\frac{1}{n^{k}}\left(C\cap\left(C+x\right)\right)$
by Lemma \ref{Sec-3-Lem:Contained_(t-t_k)}. We will construct $E$
and show that $C\cap\left(C+x\right)=B$. 

Let $\left\{ S_{d}\right\} _{d\in D}$ be the similarity mappings
which generate $C$. Let $S^{1}\left(a\right):=\bigcup_{d\in D}S_{d}\left(a\right)$
so that $C=S^{1}\left(C\right)$ by definition and let $S^{j}\left(a\right)=\left(S^{j-1}\circ S^{1}\right)\left(a\right)$
for all $j\in\mathbb{N}$. Thus, $C\cap\left(C+x\right)=S^{q}\left(C\right)\cap\left(S^{q}\left(C\right)+x\right)$.
For each $\boldsymbol{u}=\left(u_{1},u_{2},\cdots,u_{q}\right)\in D^{q}$,
define 
\[
S_{\boldsymbol{u}}\left(a\right):=\left(S_{u_{q}}\circ\cdots\circ S_{u_{1}}\right)\left(a\right)=\frac{1}{n^{q}}\left(a+\sum_{j=1}^{q}u_{j}\cdot n^{q-j}\right).
\]

Hence, $C=S^{q}\left(C\right)=\bigcup_{\boldsymbol{u}\in D^{q}}S_{\boldsymbol{u}}\left(C\right)$.
Since $x=\frac{1}{n^{q}}\left(\sum_{j=1}^{q}x_{j}\cdot n^{q-j}\right)+\frac{1}{n^{q}}x$,
let $\boldsymbol{w}:=\left(x_{1},x_{2},\cdots,x_{q}\right)$. Then
for any $\boldsymbol{v}=\boldsymbol{u}+\boldsymbol{w}$, 
\begin{align*}
S_{\boldsymbol{u}}\left(C\right)+x & =S_{\boldsymbol{u}}\left(C\right)+\frac{1}{n^{q}}\left(\sum_{j=1}^{q}x_{j}\cdot n^{q-j}\right)+\frac{x}{n^{q}}\\
 & =\frac{1}{n^{q}}\left(C+x+\sum_{j=1}^{q}\left(u_{j}+x_{j}\right)\cdot n^{q-j}\right)\\
 & =S_{\boldsymbol{v}}\left(C+x\right).
\end{align*}

Therefore, $S_{\boldsymbol{u}}\left(C\right)+x=S_{\boldsymbol{v}}\left(C\right)+\frac{x}{n^{q}}=S_{\boldsymbol{v}}\left(C+x\right)$.
Since $x_{k}\in\Delta$ for all $k$, then $\boldsymbol{v}\in D^{q}$
and
\begin{align*}
C\cap\left(C+x\right) & =\left[\bigcup_{\boldsymbol{u}\in D^{q}}S_{\boldsymbol{u}}\left(C\right)\right]\bigcap\left[\bigcup_{\boldsymbol{v}\in D^{q}+\boldsymbol{w}}\left(S_{\boldsymbol{v}}\left(C+x\right)\right)\right]\\
 & =\bigcup_{\boldsymbol{u}\in D^{q}\cap\left(D^{q}+\boldsymbol{w}\right)}\left(S_{\boldsymbol{u}}\left(C\right)\cap S_{\boldsymbol{u}}\left(C+x\right)\right)\\
 & =\bigcup_{\boldsymbol{u}\in D^{q}\cap\left(D^{q}+\boldsymbol{w}\right)}S_{\boldsymbol{u}}\left(C\cap\left(C+x\right)\right).
\end{align*}

Let $E:=\left\{ \sum_{j=1}^{q}u_{j}\cdot n^{q-j}\mid\boldsymbol{u}\in D^{q}\cap\left(D^{q}+\boldsymbol{w}\right)\right\} $.
Then $B=C_{n^{q},E}$ is the unique, nonempty compact set invariant
under the mapping 
\[
\bigcup_{e\in E}\left(\frac{1}{n^{q}}\left(\cdot+e\right)\right)=\bigcup_{\boldsymbol{u}\in D^{q}\cap\left(D^{q}+\boldsymbol{w}\right)}S_{\boldsymbol{u}}\left(\cdot\right).
\]

Hence, $C\cap\left(C+x\right)=B$. Note that $\#E=\prod_{j=1}^{q}\#D\cap\left(D+x_{j}\right)=\mu_{x}\left(q\right)$.

Suppose $D$ is sparse. It is sufficient to show that $\gamma-\gamma'\ge2$
for any $\gamma\neq\gamma'$ in $\Gamma:=\left\{ \sum_{j=1}^{q}u_{j}\cdot n^{q-j}\mid\boldsymbol{u}\in D^{q}-D^{q}\right\} $
since $E-E\subseteq\Gamma$. Let $\gamma\neq\gamma'$ be arbitrary
and $i$ be the smallest index $1\le i\le q$ such that $\gamma_{i}\neq\gamma'_{i}$.
Without loss of generality, assume $\gamma_{i}>\gamma'_{i}$. Since
$D$ is sparse and $\gamma_{j},\gamma'_{j}\in\Delta$, then $\left|\gamma_{j}-\gamma'_{j}\right|\ge2$
for all $1\le j\le q$. Thus, if $i=q$ then $\left|\gamma-\gamma'\right|=\left|\gamma_{q}-\gamma'_{q}\right|\ge2$.
Otherwise, if $i<q$ then 
\begin{align*}
\left|\gamma-\gamma'\right| & =\left|\sum_{j=i}^{q}\gamma_{j}\cdot n^{q-j}-\sum_{j=i}^{q}\gamma'_{j}\cdot n^{q-j}\right|=\left|\left(\gamma_{i}-\gamma'_{i}\right)n^{q-i}+\sum_{j=i+1}^{q}\left(\gamma_{j}-\gamma'_{j}\right)\cdot n^{q-j}\right|\\
 & \ge\left|2n^{q-i}-\sum_{j=i+1}^{q}\left(n-1\right)\cdot n^{q-j}\right|\ge\left|2n^{q-i}-n^{q-i}\right|\ge n.
\end{align*}

Therefore, $E$ is sparse.
\end{proof}
Theorem \ref{Sec-3-Thm:Rational_tau} shows that any sparse set $C$
and rational $t\in F$ is the finite, disjoint union of self-similar
sets and proves the first half of Theorem \ref{Sec-1-Thm:Rational_tau}.

\subsection{Hausdorff Measure of $C\cap\left(C+t\right)$.}

The structure of the set $C\cap\left(C+t\right)$ is given by Theorem
\ref{Sec-3-Thm:Rational_tau} when $t$ is translate equivalent to
a rational, and Lemma \ref{Sec-3-Lem:Contained_(t-t_k)} when $\sigma_{t}\left(k\right)=\pm1$
for all $k$. This additional structure allows us to apply various
methods for calculating the Hausdorff dimension and measure of $C\cap\left(C+t\right)$.
If $t$ is an arbitrary element of $F$ such that $\sigma_{t}\left(k\right)=\pm1$
for all $k$, the Hausdorff dimension of $C\cap\left(C+t\right)$
can be calculated by methods in \cite{PePh11b} and \cite{PePh11a}.
Specifically, if $t=0._{n}t_{1}t_{2}\cdots t_{k}\overline{t_{k+1}\cdots t_{k+q}}$
and $\sigma_{t}\left(k\right)=1$ for all $k$, then the Hausdorff
dimension of $C\left(t\right)$ is $\frac{1}{q}\sum_{j=1}^{q}\log_{n}\#\left(D\cap\left(D+t_{k+j}\right)\right)$.
\begin{rem}
Assume the notation from Theorem \ref{Sec-3-Thm:Rational_tau}. Furthermore,
suppose $D$ is sparse, then $s:=\log_{n^{q}}\left(\#E\right)$ is
the Hausdorff dimension of $B$ and 
\[
\mathscr{H}^{s}\left(C\cap\left(C+t\right)\right)=\left(\mu_{t}\left(k\right)\right)^{s}\cdot\mathscr{H}^{s}\left(B\right).
\]
 An algorithm for calculating the Hausdorff measure of $B$ in a finite
number of steps is known, see \cite{AySt99}, \cite{Ma86}, and \cite{Ma87}.
An estimate of the $\dim_{H}\left(C\cap\left(C+t\right)\right)$-dimensional
Hausdorff measure of $C\cap\left(C+t\right)$ is given in \cite{PePh11b}
even when the set is is not a finite union of self-similar sets. 
\end{rem}
Proposition \ref{Sec-3-Prop:Hausdorff Measure when D=00003D{0,d}}
gives a formula for the Hausdorff measure of a deleted digits Cantor
set when $D$ contains only two digits.
\begin{prop}
\label{Sec-3-Prop:Hausdorff Measure when D=00003D{0,d}}Let $n\ge3$
and $0\le a<b<n$ be nonnegative integers. If $D=\left\{ a,b\right\} $
and $s:=\log_{n}\left(2\right)$, then $\mathscr{H}^{s}\left(C_{n,D}\right)=\left(\frac{b-a}{n-1}\right)^{s}$.\end{prop}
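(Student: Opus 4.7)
The plan is to reduce to a normalized case and then apply the mass distribution principle. First, I would establish the identity
\[
C_{n, D} = \frac{a}{n-1} + \frac{b-a}{n-1}\, C_{n, \{0, n-1\}},
\]
obtained by writing each digit $x_{k}\in\{a,b\}$ as $x_{k}=a+(b-a)z_{k}$ with $z_{k}\in\{0,1\}$ in the $n$-ary representation (so $C_{n,\{0,n-1\}}=(n-1)C_{n,\{0,1\}}$ and the claim follows). Combined with the scaling law $\mathscr{H}^{s}(\lambda E)=\lambda^{s}\mathscr{H}^{s}(E)$, this reduces the problem to proving $\mathscr{H}^{s}(C^{*})=1$, where $C^{*}:=C_{n,\{0,n-1\}}$ has convex hull $[0,1]$.

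For the upper bound, the level-$k$ $n$-ary cover of $C^{*}$ consists of $2^{k}$ disjoint intervals of length $n^{-k}$, yielding $\sum_{i}|U_{i}|^{s}=2^{k}\cdot n^{-ks}=1$ since $n^{s}=2$. Hence $\mathscr{H}^{s}(C^{*})\le 1$.

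For the lower bound, I would place the natural Bernoulli $(1/2,1/2)$ self-similar measure $\mu$ on $C^{*}$, which assigns mass $2^{-k}$ to each level-$k$ $n$-ary cylinder, and apply the mass distribution principle. It suffices to show $\mu(U)\le|U|^{s}$ for every closed interval $U$. Set $\alpha:=\sup_{U}\mu(U)/|U|^{s}$, which is at least $1$ (take $U=[0,1]$). The key observation is that this ratio is invariant under rescaling $U$ into any level-$k$ sub-cylinder via the IFS maps $S_{0}(x)=x/n$ and $S_{n-1}(x)=(x+n-1)/n$, since both the $\mu$-mass and $|U|^{s}$ scale by the same factor $2^{-k}$. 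Any maximizing sequence may therefore be rescaled to lie in $[0,1]$ while meeting both halves $[0,1/n]$ and $[(n-1)/n,1]$; the rescaled intervals have lengths in $[(n-2)/n,1]$, so by Hausdorff-compactness and the atomlessness of $\mu$, the sup is attained at some $U^{*}=[x,y]$ that spans the middle gap.

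Setting $\ell:=1/n-x$ and $r:=y-(n-1)/n$ (both in $[0,1/n]$), we have $|U^{*}|=\ell+r+(n-2)/n$, and applying the definition of $\alpha$ to each half gives $\mu(U^{*})\le\alpha(\ell^{s}+r^{s})$. Combined with attainment $\mu(U^{*})=\alpha|U^{*}|^{s}$ this forces $|U^{*}|^{s}\le\ell^{s}+r^{s}$. On the other hand, the function $h(\ell,r):=(\ell+r+(n-2)/n)^{s}-\ell^{s}-r^{s}$ satisfies
\[
\frac{\partial h}{\partial\ell}=s\bigl[(\ell+r+(n-2)/n)^{s-1}-\ell^{s-1}\bigr]<0
\]
because $t\mapsto t^{s-1}$ is decreasing for $t>0$ when $s<1$; symmetrically $\partial h/\partial r<0$. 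Since $h(1/n,1/n)=1-2n^{-s}=0$, we conclude $h\ge 0$ on $[0,1/n]^{2}$ with equality only at $(1/n,1/n)$. Hence $U^{*}=[0,1]$, giving $\alpha=1$ and completing the proof. The main obstacle is the compactness-plus-attainment step, which is handled via the scale invariance of the ratio and the Hausdorff continuity of $\mu$ on closed intervals (using that $\mu$ is atomless so boundary contributions vanish).
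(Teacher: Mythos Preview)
Your reduction is exactly the paper's: translate to $D=\{0,d\}$ and rescale to $C^{*}=C_{n,\{0,n-1\}}$, then invoke the scaling law $\mathscr{H}^{s}(\lambda E)=\lambda^{s}\mathscr{H}^{s}(E)$. The only difference is in how the key input $\mathscr{H}^{s}(C^{*})=1$ is handled. The paper treats this as a known fact and cites Hausdorff, Hutchinson, Falconer, and Marion; your proposal instead supplies a self-contained proof via the mass distribution principle, rescaling a maximizing interval until it spans the central gap and then using the concavity inequality $(\ell+r+(n-2)/n)^{s}\ge \ell^{s}+r^{s}$ (with equality only at $\ell=r=1/n$) to force $U^{*}=[0,1]$. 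That argument is correct and is essentially the classical computation underlying the references the paper cites. What you gain is independence from the literature; what the paper gains is a two-line proof. The attainment step you flag is the only place requiring care, and your sketch (rescaling doubles the $\mu$-mass, so the process terminates; then continuity of $\mu$ on closed intervals by atomlessness gives a maximum on the compact parameter set $[0,1/n]\times[(n-1)/n,1]$) is sound.
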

\begin{proof}
Let $n\ge3$ be given. We may assume $D=\left\{ 0,d\right\} $ where
$d:=b-a\ge1$ and $\Delta=\left\{ -d,0,d\right\} $. If $d\ge2$ then
$D$ is sparse.

Since $\frac{n-1}{d}\cdot D=\left\{ 0,n-1\right\} $ then $B=C_{n,\frac{n-1}{d}\cdot D}$
is the self-similar Cantor set generated by removing the open ``middle''
interval of length $1-2\cdot\frac{1}{n}$. This set is well known
to have measure $\mathscr{H}^{s}\left(B\right)=1$, see e.g., \cite{Hau19},
\cite{Hut81}, \cite{Fal85}, or \cite{Ma86}. Hence, $\frac{d}{n-1}B=\left\{ \sum_{j=1}^{\infty}\frac{t_{j}}{n^{j}}\mid t_{j}\in D\right\} =C$
and $\mathscr{H}^{s}\left(C\right)=\mathscr{H}^{s}\left(\frac{d}{n-1}\cdot B\right)=\left(\frac{d}{n-1}\right)^{s}$.\end{proof}
\begin{example}
\label{Sec-3-Ex:MTC and t=00003D1/4}Let $C=C_{3,\left\{ 0,2\right\} }$
denote the middle thirds Cantor set. Let $t=0._{3}\overline{20}=\frac{3}{4}$
so that $q=2$ is a period of $\sigma_{t}\left(k\right)$. By Theorem
\ref{Sec-3-Thm:Rational_tau}, $C\cap\left(C+t\right)$ is the self-similar
set $C_{9,\left\{ 6,8\right\} }$. If $s:=\log_{9}\left(2\right)$,
then $\mathscr{H}^{s}\left(C\cap\left(C+t\right)\right)=4^{-s}$ by
Proposition \ref{Sec-3-Prop:Hausdorff Measure when D=00003D{0,d}}.
\end{example}

\subsection{\label{Sub-3.4:Non-sparse D}Non-sparse digit sets.}

Many of the results in Section \ref{Sec-3:Rational t} only require
that $t\in F$ satisfy $\sigma_{t}\left(k\right)=\pm1$ for all $k\in\mathbb{N}_{0}$.
In this section we construct specific examples to apply these results
when $D$ is not a sparse digits set. Example \ref{Sec-3-Ex:non-sparse D}
constructs a family of values $t\in F$ when $D$ is not sparse.
\begin{example}
\label{Sec-3-Ex:non-sparse D}Let $n=10$, $D=\left\{ 0,1,2,6,8\right\} $,
and $C=C_{n,D}$. Then $D$ is not sparse, yet $\left\{ 2,8\right\} \subset\Delta\setminus\left(\Delta-1\right)$
where $\setminus$ denotes set subtraction. Thus, any $t\in C_{n,\left\{ 2,8\right\} }$
is such that $\sigma_{t}\left(k\right)=1$ for all $k$ by definition
of $\sigma$. Let $t=0._{10}\overline{2}=\frac{2}{9}$. Since $D\cap\left(D+2\right)=\left\{ 2,8\right\} $
then $C\cap\left(C+t\right)=\left\{ 0._{n}x_{1}x_{2}\ldots\mid x_{k}\in\left\{ 2,8\right\} \right\} =C_{n,\left\{ 2,8\right\} }$.
If $s:=\log_{n}\left(2\right)$ then $\mathscr{H}^{s}\left(C\cap\left(C+t\right)\right)=\left(\frac{2}{3}\right)^{s}$
by Proposition \ref{Sec-3-Prop:Hausdorff Measure when D=00003D{0,d}}.
\end{example}
In specific cases, these methods can be applied to analyze values
$t\in F$ when $D$ is not sparse and $\sigma_{t}\left(k\right)\neq\pm1$
for some $k$. 
\begin{example}
\label{Sec-3-Ex:Self-Similar with sigma=00003Di}Let $D=\left\{ 0,2,4,7,\ldots,4+3r\right\} $
for some $r>2$ and $n>4+3\left(r+1\right)$. Choose $t=0._{n}\overline{2}\in F$.
Note that $D$ is not sparse and $\sigma_{t}\left(k\right)=i$ for
all $k\ge1$. For each $k$, $C_{k}\cap\left(C_{k}+\left\lfloor t\right\rfloor _{k}\right)$
contains $2^{k}$ interval cases and $r\cdot2^{k-1}$ potential interval
cases, however the potential interval cases never contain points in
$C_{n,D}\cap\left(C_{n,D}+t\right)$ since $2$ is neither in $n-\Delta$
nor $n-\Delta-1$. 

For each $k$, let $I_{k}$ denote the collection of $2^{k}$ interval
cases of $C_{k}\cap\left(C_{k}+\left\lfloor t\right\rfloor _{k}\right)$
so that $C_{n,D}\cap\left(C_{n,D}+t\right)\subset\bigcup_{J\in I_{k}}J$
for each $k$. If $E:=\left\{ 0,2,4\right\} $, then $I_{k}$ consists
of the same $2^{k}$ intervals chosen from the $k^{\text{th}}$ step
in the construction of $C_{n,E}\cap\left(C_{n,E}+t\right)$. Since
$C_{n,E}\cap\left(C_{n,E}+t\right)=\bigcap_{k=1}^{\infty}\left(\bigcup_{J\in I_{k}}J\right)$
implies $C_{n,D}\cap\left(C_{n,D}+t\right)\subseteq C_{n,E}\cap\left(C_{n,E}+t\right)$,
and $E\subset D$, then 
\[
C_{n,D}\cap\left(C_{n,D}+t\right)=C_{n,E}\cap\left(C_{n,E}+t\right).
\]

Since $E$ is sparse, then $C_{n,E}\cap\left(C_{n,E}+t\right)=C_{n,\left\{ 2,4\right\} }$
and $\mathscr{H}^{s}\left(C_{n,\left\{ 2,4\right\} }\right)=\left(\frac{2}{n-1}\right)^{s}$
when $s:=\log_{n}\left(2\right)$ by Proposition \ref{Sec-3-Prop:Hausdorff Measure when D=00003D{0,d}}.
Thus, for a specific choice of $t$, $D$, and $n$, we can apply
our method even though $D$ is not sparse and $\sigma_{t}\left(k\right)$
does not equal $\pm1$ for any $k$.
\end{example}

\section{\label{Sec-4:Unions-of-Self-Similar}Unions of Self-Similar Sets}

In this section we prove the second half of Theorem \ref{Sec-1-Thm:Rational_tau}. 
\begin{rem}
A real number $\alpha\in\left[0,1\right]$ has a \emph{$\Delta$ representation,}
if $\alpha=\sum_{k=1}^{\infty}\frac{\alpha_{k}}{n^{k}}=0._{n}\alpha_{1}\alpha_{2}\ldots$
and each $\alpha_{k}$ is a digit of $\Delta$ for all $k$. Let $\left\lfloor \alpha\right\rfloor _{k}=0._{n}\alpha_{1}\alpha_{2}\ldots\alpha_{k}.$
Note $\Delta$ representations allows the digits $\alpha_{k}$ to
be positive for some $k$ and negative for other $k.$ It is easy
to see that $F$ is the self-similar set $\left\{ 0._{n}\alpha_{1}\alpha_{2}\cdots\mid\alpha_{k}\in\Delta\right\} $,
see e.g., \cite{PePh11a}. Throughout this paper we will denote $\Delta$
representations as $\alpha$, $\gamma$ and reserve $t$, $x$, and
$y$ for $n$-ary representations.
\end{rem}
The discussion in Section \ref{Sec-2:Old-Stuff} holds for $C\cap\left(C+\alpha\right)$
with $\Delta$ representations of $\alpha$ in $F.$ Detailed proofs
are in \cite{PePh11b} for the case of $n$-ary representations. The
key observations are that $\Delta=-\Delta$, $\left|h\right|\in\Delta-1$
iff $-\left|h\right|\in\Delta+1$, $\sigma_{\alpha}(k)$ only has
values in $\{1,i\},$ and the geometric configurations we called potentially
empty cases in Section \ref{Sec-2:Old-Stuff} are now potential interval
cases. 
\begin{rem}
Suppose $\alpha\in F$ admits finite $\Delta$ representation and
let $j,k\in\mathbb{Z}$ satisfy $\alpha=\left\lfloor \alpha\right\rfloor _{k}=\frac{j}{n^{k}}$.
We may define an $n$-ary representation $t:=0._{n}t_{1}t_{2}\cdots t_{k}=\frac{\left|j\right|}{n^{k}}$
so that $C\left(t\right)=C\left(\left|\alpha\right|\right)=C\left(\alpha\right)$
by definition of $F$. Thus, we may apply Theorem 3.1 of \cite{PePh11b}
so that $C\cap\left(C+\alpha\right)=A\cup B$, where $A$ and $B$
are the sets defined in Remark \ref{Sec-2-Rem:Finite n-ary Representations}.
Hence, we will focus our analysis on values $\alpha\in F$ which do
not admit finite $\Delta$ representations.
\end{rem}

\subsection{\label{Sub-4.1:Delta Translation Equivalence}Translation Equivalence
of $\Delta$ representations.}

The next result continues our investigation of translation equivalence.
More precisely, we describe translation equivalence in terms of the
digit set $D.$ 
\begin{thm}
\label{Sec-4-Thm:Translate using integer sequence} Let $n\geq3$
and let $D$ be a sparse digit set. Let $\alpha=\sum_{k=1}^{\infty}\alpha_{k}n^{-k},$
$\beta=\sum_{k=1}^{\infty}\beta_{k}n^{-k},$ and $\delta=\sum_{k=1}^{\infty}\delta_{k}n^{-k}.$
If $D\cap\left(D+\alpha_{k}\right)=D\cap\left(D+\beta_{k}\right)+\delta_{k}$
for all $k\geq1,$ then $C\cap\left(C+\alpha\right)=\left(C\cap\left(C+\beta\right)\right)+\delta.$ \end{thm}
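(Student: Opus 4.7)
The plan is to work at the digit level: show that membership $x \in C \cap (C+\alpha)$ is equivalent to a coordinatewise digit condition $x_k \in D \cap (D+\alpha_k)$ for every $k$, and then use the hypothesis to realize the translation $x \mapsto x - \delta$ as a digitwise shift $x_k \mapsto x_k - \delta_k$ that bijects $C \cap (C+\alpha)$ onto $C \cap (C+\beta)$.

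First I would establish uniqueness of $\Delta$-representations under sparsity of $D$. If two representations $\sum_k \gamma_k n^{-k} = \sum_k \gamma_k' n^{-k}$ with $\gamma_k, \gamma_k' \in \Delta$ differ first at index $k$, sparsity gives $|\gamma_k - \gamma_k'| \geq 2$, while the tail estimate
\[
\Bigl|\sum_{j>k}(\gamma_j - \gamma_j')n^{-j}\Bigr| \leq \sum_{j>k}\frac{2(n-1)}{n^j} = \frac{2}{n^k}
\]
yields a contradiction except in the degenerate case where $\{0,n-1\} \subseteq D$ and $\gamma$ is an $n$-ary rational of a specific endpoint form; those exceptional values admit finite $n$-ary representations and so can be handled separately via Remark \ref{Sec-2-Rem:Finite n-ary Representations}. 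By the identical estimate, each point of $C$ has a unique $D$-expansion. Consequently, for $x \in C \cap (C+\alpha)$, writing $x = \sum x_k n^{-k}$ and $x - \alpha = \sum u_k n^{-k}$ with $x_k, u_k \in D$, the two $\Delta$-representations $\alpha = \sum \alpha_k n^{-k} = \sum(x_k - u_k) n^{-k}$ must agree coordinatewise, so $x_k - u_k = \alpha_k$ and $x_k \in D \cap (D+\alpha_k)$ for every $k$.

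With this digitwise characterization in hand, the hypothesis $D \cap (D+\alpha_k) = D \cap (D+\beta_k) + \delta_k$ delivers the claim directly. For $x \in C \cap (C+\alpha)$, set $y_k := x_k - \delta_k$, which lies in $D \cap (D+\beta_k) \subseteq D$ by hypothesis. Then $y := \sum y_k n^{-k} = x - \delta$ is a point of $C$ satisfying $y - \beta = \sum(y_k - \beta_k) n^{-k} \in C$, hence $y \in C \cap (C+\beta)$ and $x \in (C \cap (C+\beta)) + \delta$. The reverse inclusion follows by the symmetric map $y \mapsto y + \delta$, since the hypothesis equally expresses $D \cap (D+\beta_k)$ as $D \cap (D+\alpha_k) - \delta_k$. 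The main obstacle is verifying $\Delta$-representation uniqueness and isolating the exceptional $n$-ary rationals where it fails; once that is handled, the rest is direct digit bookkeeping.
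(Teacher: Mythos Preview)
Your approach is genuinely different from the paper's: you work pointwise at the digit level (essentially anticipating Lemma~\ref{Sec-4-Lem:Classify C-cap-(C+a) using digits}), whereas the paper proves Theorem~\ref{Sec-4-Thm:Translate using integer sequence} geometrically, via the finite-stage identity
\[
C_k\cap\Bigl(C_k+\sum_{i=1}^{k}\alpha_i n^{-i}\Bigr)=\sum_{i=1}^{k}\tfrac{1}{n^{i}}\bigl(D\cap(D+\alpha_i)\bigr)+[0,n^{-k}]
\]
and then passes to the limit through Lemma~\ref{Sec-3-Lem:Sequence of Sets}. That route never needs a coordinatewise description of individual points of $C\cap(C+\alpha)$.

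Your argument, however, has a genuine gap at the step ``the two $\Delta$-representations $\alpha=\sum\alpha_k n^{-k}=\sum(x_k-u_k)n^{-k}$ must agree coordinatewise.'' The tail estimate you give only yields $|\gamma_k-\gamma_k'|\,n^{-k}\le 2n^{-k}$, and equality is attainable. Concretely, take $n=3$, $D=\{0,2\}$ (sparse), and $\alpha=1/3$ with the $\Delta^{+}$-representation $(0,2,2,\ldots)$. Then $x=2/3\in C\cap(C+1/3)$ since $2/3=0._3 2\overline{0}\in C$ and $2/3-1/3=1/3=0._3 0\overline{2}\in C$; but $(x_k-u_k)=(2,-2,-2,\ldots)\neq(0,2,2,\ldots)$, so $x_2=0\notin D\cap(D+\alpha_2)=\{2\}$. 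Thus the digitwise set $\{x:x_k\in D\cap(D+\alpha_k)\}$ equals $\{1/3,1\}$, strictly smaller than $C\cap(C+1/3)=\{1/3,2/3,1\}$. Your forward inclusion therefore fails for this $x$, and your proposed bijection $x\mapsto x-\delta$ does not see all of $C\cap(C+\alpha)$.

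Deferring the exceptional $\alpha$ to Remark~\ref{Sec-2-Rem:Finite n-ary Representations} does not close the gap: that remark only tells you $C\cap(C+\alpha)$ is a finite union of scaled copies of $C$ together with finitely many points, which by itself does not deliver the identity $C\cap(C+\alpha)=(C\cap(C+\beta))+\delta$ under the theorem's hypothesis. The paper's finite-stage argument sidesteps the whole issue because at level $k$ one is intersecting unions of closed $n$-ary intervals, where sparsity prevents any accidental overlaps, and the limit lemma handles the passage $k\to\infty$ without ever invoking digit uniqueness for $\alpha$.
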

\begin{proof}
Recall, if $A$ is a set of real numbers and $t$ is a real number
then $tA=\left\{ ta\mid a\in A\right\} $ and if $A$ and $B$ are
two sets of real numbers then $A+B=\left\{ a+b\mid a\in A,b\in B\right\} .$ 

Let $C_{0}=[0,1].$ The refinement is $C_{1}=\frac{1}{n}\left(D+C_{0}\right)=\frac{1}{n}D+\left[0,1/n\right].$
The refinement of $C_{1}$ is $C_{2}=\frac{1}{n}\left(D+C_{1}\right)=\frac{1}{n}D+\frac{1}{n^{2}}D+[0,1/n^{2}].$
Continuing in this manner we see that 
\[
C_{k}=\frac{1}{n}D+\frac{1}{n^{2}}D+\cdots+\frac{1}{n^{k}}D+\left[0,1/n^{k}\right]=\sum_{i=1}^{k}\frac{1}{n^{i}}D+\left[0,1/n^{k}\right].
\]
By sparsity of $D$ the distance between any two of these intervals
is $\ell/n^{k}$ for some integer $1\leq\ell<n^{k}.$ Therefore,
\[
C_{k}+\sum_{i=1}^{k}\alpha_{i}n^{-i}=\sum_{i=1}^{k}\frac{1}{n^{i}}\left(D+\alpha_{i}\right)+\left[0,1/n^{k}\right]
\]
 and consequently, 
\[
C_{k}\cap\left(C_{k}+\sum_{i=1}^{k}\alpha_{i}n^{-i}\right)=\sum_{i=1}^{k}\frac{1}{n^{i}}\left(D\cap\left(D+\alpha_{i}\right)\right)+\left[0,1/n^{k}\right].
\]
Using $D\cap\left(D+\alpha_{i}\right)=\left(D\cap\left(D+\beta_{i}\right)\right)+\delta_{i}$
for $1\leq i\leq k,$ it follows that 
\[
C_{k}\cap\left(C_{k}+\sum_{i=1}^{k}\alpha_{i}n^{-i}\right)=\left(C_{k}\cap\left(C_{k}+\sum_{i=1}^{k}\beta_{i}n^{-i}\right)\right)+\sum_{i=1}^{k}\delta_{i}n^{-i}
\]
 and that this is a collection of intervals each of length $1/n^{k}.$
Let $\beta^{(0)}=\beta,$ and $\beta^{(k)}=\sum_{i=1}^{k}\alpha_{i}n^{-i}+\sum_{i=k+1}^{\infty}\beta_{i}n^{-i}.$
Using $\beta^{(k)}-\sum_{i=1}^{k}\alpha_{i}n^{-i}=\beta^{(0)}-\sum_{i=1}^{k}\beta_{i}n^{-i}$
we conclude
\[
C_{k}\cap\left(C_{k}+\beta^{(k)}\right)=\left(C_{k}\cap\left(C_{k}+\beta^{(0)}\right)\right)+\sum_{i=1}^{k}\delta_{i}n^{-i}
\]
is a collection of intervals each of length $\frac{1}{n^{k}}\left(1-\left|\sum_{i=k+1}^{\infty}\beta_{i}n^{-i}\right|\right).$
Repeatedly refining the intervals in $C_{k}$ we get 
\[
C_{j}\cap\left(C_{j}+\beta^{(k)}\right)=\left(C_{j}\cap\left(C_{j}+\beta^{(0)}\right)\right)+\sum_{i=1}^{k}\delta_{i}n^{-i}
\]
for $j\geq k.$ Consequently, 
\[
C\cap\left(C+\beta^{(k)}\right)=\left(C\cap\left(C+\beta^{(0)}\right)\right)+\sum_{i=1}^{k}\delta_{i}n^{-i}.
\]
Since $\beta^{(k)}\to\alpha$ as $k\to\infty$ the result follows
from Lemma \ref{Sec-3-Lem:Sequence of Sets}. 
\end{proof}
Example \ref{Sec-4-Ex:Irrational_Self_Similar} applies Theorem \ref{Sec-4-Thm:Translate using integer sequence}
to construct an uncountable set of values $x\in F^{+}$ which are
not only translation equivalent, but all generate the same set $C\cap\left(C+t\right)$.
\begin{example}
\label{Sec-4-Ex:Irrational_Self_Similar}Let $D=\left\{ 0,5,7\right\} $
and $n=8$ so that $\Delta=\left\{ -7,-5,-2,0,2,5,7\right\} $ and
$C=C_{n,D}$ is sparse. Choose $t:=0._{8}\overline{07}$. Then $C\cap\left(C+t\right)=C_{64,\left\{ 7,47,63\right\} }$
has dimension $s:=\log_{64}\left(3\right)$ and measure $0<\mathscr{H}^{s}\left(C\cap\left(C+t\right)\right)<\infty$.

Note that $D\cap\left(D+2\right)=\left\{ 7\right\} $, $D\cap\left(D+5\right)=\left\{ 5\right\} $,
and $D\cap\left(D+7\right)=\left\{ 7\right\} $. Let $x\in\left[0,1\right]$
with ternary representation $x:=0._{3}x_{1}x_{2}\ldots$. Let $f:\left[0,1\right]\to F$
such that $f\left(x\right)=0._{8}y_{1}y_{2}\ldots$ consists of digits
\begin{align*}
y_{2k-1} & :=0\\
y_{2k} & :=\begin{cases}
2 & \text{ if }x_{k}=0\\
5 & \text{ if }x_{k}=1\\
7 & \text{ if }x_{k}=2.
\end{cases}
\end{align*}
Then, $f\left(1\right)=t$, $\sigma_{f\left(x\right)}\left(k\right)=1$
for all $k$, and $f\left(x\right)$ is irrational whenever $x$ is
irrational. It follows from Theorem \ref{Sec-4-Thm:Translate using integer sequence}
that $\left(C\cap\left(C+f\left(x\right)\right)\right)+c=C\cap\left(C+t\right)$
is self-similar, in particular, $f\left(x\right)$ is translation
equivalent to $t$ for all $x\in\left[0,1\right]$. It is, perhaps,
interesting to note that since $D\cap\left(D+2\right)=D\cap\left(D+7\right)$
then $C\cap\left(C+f\left(x\right)\right)=C\cap\left(C+t\right)$
for any representation of $x$ chosen from the middle thirds Cantor
set $C_{3,\left\{ 0,2\right\} }$.
\end{example}
Define $\Delta^{+}:=\Delta\cap\left[0,\infty\right)$. We say that
$\alpha\in F$ has a\emph{ $\Delta^{+}$ representation} if $\alpha=0._{n}\alpha_{1}\alpha_{2}\ldots$
such that each $\alpha_{k}\in\Delta^{+}$ for all $k$. According
to Corollary \ref{Sec-4-Cor:Positive Delta representations.}, when
$D$ is sparse, we can restrict our analysis to $\Delta^{+}$ representations
in $F$ withoutout loss of generality.
\begin{cor}
\label{Sec-4-Cor:Positive Delta representations.}Suppose $D$ is
sparse. If $\alpha\in F^{+}$ has $\Delta$ representation $\alpha=\sum_{k=1}^{\infty}\alpha_{k}n^{-k},$
then $\alpha$ is translation equivalent to $\widetilde{\alpha}:=\sum_{k=1}^{\infty}\left|\alpha_{k}\right|n^{-k}$. \end{cor}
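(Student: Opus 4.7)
The plan is to reduce the corollary directly to Theorem \ref{Sec-4-Thm:Translate using integer sequence} by finding, for each $k$, an appropriate $\delta_{k}$ witnessing the required identity $D\cap\left(D+\alpha_{k}\right)=\left(D\cap\left(D+\left|\alpha_{k}\right|\right)\right)+\delta_{k}$. First I would record the elementary algebraic identity
\[
D\cap\left(D-h\right)=\left(D\cap\left(D+h\right)\right)-h\qquad\text{for every integer }h,
\]
which follows by the substitution $y=x+h$: $x\in D\cap(D-h)$ iff $x,x+h\in D$ iff $x+h\in D\cap(D+h)$ iff $x\in D\cap(D+h)-h$. This identity holds for any digit set (no sparsity needed here), and in particular is an identity of integers inside $\Delta$.

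Given $\alpha=\sum_{k=1}^{\infty}\alpha_{k}n^{-k}$ with $\alpha_{k}\in\Delta$, I would then define
\[
\beta_{k}:=\left|\alpha_{k}\right|,\qquad \delta_{k}:=\begin{cases} 0 & \text{if }\alpha_{k}\geq 0,\\ \alpha_{k} & \text{if }\alpha_{k}<0.\end{cases}
\]
When $\alpha_{k}\geq 0$ the required identity $D\cap(D+\alpha_{k})=(D\cap(D+\beta_{k}))+\delta_{k}$ is trivial, and when $\alpha_{k}<0$ it is precisely the displayed algebraic identity applied with $h=\left|\alpha_{k}\right|$, since then $D\cap(D+\alpha_{k})=D\cap(D-|\alpha_{k}|)=(D\cap(D+|\alpha_{k}|))-|\alpha_{k}|=(D\cap(D+\beta_{k}))+\delta_{k}$.

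With these choices $\beta:=\sum\beta_{k}n^{-k}=\widetilde{\alpha}$ and $\delta:=\sum\delta_{k}n^{-k}$ is a well-defined (non-positive) real number. Since $D$ is sparse, Theorem \ref{Sec-4-Thm:Translate using integer sequence} applies and yields
\[
C\cap\left(C+\alpha\right)=\left(C\cap\left(C+\widetilde{\alpha}\right)\right)+\delta,
\]
which is exactly the statement that $\alpha$ and $\widetilde{\alpha}$ are translation equivalent. The only point of mild care is checking that Theorem \ref{Sec-4-Thm:Translate using integer sequence} permits the $\delta_{k}$ to be negative integers outside of $\Delta$ or $\Delta^{+}$; inspection of its proof shows that only the set-theoretic identities involving $D\cap(D+\cdot)$ are used, so signed integer digits cause no difficulty. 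In fact the only potential obstacle I anticipate is this bookkeeping subtlety; everything else is formal manipulation.
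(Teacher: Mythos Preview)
Your proof is correct and follows essentially the same approach as the paper: both arguments apply Theorem \ref{Sec-4-Thm:Translate using integer sequence} via the elementary identity $D\cap(D-h)=(D\cap(D+h))-h$, with $\delta_{k}=-\left|\alpha_{k}\right|$ whenever $\alpha_{k}<0$. Your version is somewhat more explicit in setting up $\beta_{k}$ and $\delta_{k}$ and in noting that Theorem \ref{Sec-4-Thm:Translate using integer sequence} places no sign restriction on the $\delta_{k}$, but the substance is identical.
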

\begin{proof}
Let $\alpha\in F$ be given with $\Delta$ representation $\alpha:=\sum_{k=1}^{\infty}\alpha_{k}n^{-k}$.
If $\alpha_{k}\le0$ for some $k$, then $\alpha_{k},\pm\left|\alpha_{k}\right|$
are integers such that $D\cap\left(D+\alpha_{k}\right)=D\cap\left(D+\left|\alpha_{k}\right|\right)-\left|\alpha_{k}\right|.$
Hence $\alpha$ is translation equivalent to $\widetilde{\alpha}=\sum_{k=1}^{\infty}\left|\alpha_{k}\right|n^{-k}$
by Theorem \ref{Sec-4-Thm:Translate using integer sequence}.\end{proof}
\begin{rem}
For the middle thirds Cantor set this shows that any intersection
$C\cap\left(C+t\right)$ is a translate of an intersection $C\cap\left(C+s\right)$
with $s$ in $C.$
\end{rem}

\subsection{Proof of Theorem \ref{Sec-1-Thm:Rational_tau}.}

In this section, we will prove the second part of Theorem \ref{Sec-1-Thm:Rational_tau}.
We begin by showing that $\Delta^{+}$ representations are unique.
\begin{lem}
\label{Sec-4-Lem:Unique Delta Representations}Let $D$ be sparse.
If $\alpha\in F^{+}$ has a representation $\alpha=\sum_{k=1}^{\infty}\frac{\alpha_{k}}{n^{k}}$
with digits $\alpha_{k}\in\Delta^{+}$ for all $k$, then this representation
is unique. \end{lem}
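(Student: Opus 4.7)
The plan is a standard first-differing-digit comparison, where the sparsity hypothesis on $D$ (equivalently, a gap of at least $2$ between any two distinct elements of $\Delta$) is exactly what makes the argument close.

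Suppose, for contradiction, that $\alpha$ has two distinct representations $\alpha=\sum_{k=1}^\infty \alpha_k n^{-k}=\sum_{k=1}^\infty \alpha_k' n^{-k}$ with all $\alpha_k,\alpha_k'\in\Delta^{+}$. Let $k$ be the smallest index at which $\alpha_k\neq \alpha_k'$, and without loss of generality assume $\alpha_k>\alpha_k'$. Subtracting the two expansions and multiplying through by $n^{k}$ gives
\[
\alpha_k-\alpha_k' \;=\; \sum_{j=k+1}^{\infty}(\alpha_j'-\alpha_j)\,n^{\,k-j}.
\]

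Now I bound the two sides separately. On the left, $\alpha_k$ and $\alpha_k'$ are distinct elements of $\Delta$, and sparsity of $D$ forces any two distinct elements of $\Delta$ to differ by at least $2$, so $\alpha_k-\alpha_k'\ge 2$. On the right, since $D\subseteq\{0,1,\dots,n-1\}$ we have $\Delta^{+}\subseteq\{0,1,\dots,n-1\}$, hence $|\alpha_j'-\alpha_j|\le n-1$ for every $j$, and therefore
\[
\left|\sum_{j=k+1}^{\infty}(\alpha_j'-\alpha_j)\,n^{\,k-j}\right|
\;\le\;(n-1)\sum_{i=1}^{\infty}n^{-i}\;=\;1.
\]
Comparing these bounds gives $2\le 1$, a contradiction. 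Hence the representation is unique.

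The step that carries all the weight is the lower bound $\alpha_k-\alpha_k'\ge 2$, which is simply the translation of sparsity into the digit set $\Delta$; everything else is a geometric-series estimate. There is no real obstacle, but one should be careful that the hypothesis $\alpha_k\in\Delta^{+}$ (not merely $\alpha_k\in\Delta$) is what keeps the tail bound at $1$ rather than $2$, since with signed digits the tail could be as large as $2$ and the argument would fail — indicating why the positivity assumption on the digits, not just sparsity, is essential to uniqueness.
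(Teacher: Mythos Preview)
Your proof is correct. Both your argument and the paper's are first-differing-digit arguments, but they package it differently: the paper observes that a $\Delta^{+}$ expansion is in particular an ordinary $n$-ary expansion (since $\Delta^{+}\subseteq\{0,\dots,n-1\}$), invokes the classical fact that two distinct $n$-ary expansions of the same number must look like $\dots b\overline{0}$ and $\dots(b-1)\overline{(n-1)}$, and then notes that $b,b-1\in\Delta$ contradicts sparsity. You bypass that classification and do the tail estimate directly, getting the contradiction $2\le 1$ from the gap $\alpha_k-\alpha_k'\ge 2$ versus the tail bound $\le 1$. Your route is slightly more self-contained; the paper's is slightly shorter because it outsources the tail analysis to the standard $n$-ary uniqueness result. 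Your closing remark about why positivity (not just sparsity) matters is a nice observation and matches exactly where the paper's reduction to ordinary $n$-ary digits would break down for signed digits.
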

\begin{proof}
Suppose the $\Delta^{+}$ representation $\alpha=0._{n}\alpha_{1}\alpha_{2}\ldots$
is not unique. Any sequence $\left\{ \alpha_{k}\right\} \subseteq\Delta^{+}$
is also a sequence of $\left\{ 0,1,\ldots,n-1\right\} $ so that $\sum_{k=1}^{\infty}\alpha_{k}n^{-k}$
is an $n$-ary representation of $\alpha$. Thus $\alpha$ has two
$n$-ary representations with digits in $\Delta^{+}$, namely, $0._{n}\alpha_{1}\ldots\alpha_{k}b\overline{0}$
and $0._{n}\alpha_{1}\ldots\alpha_{k}\left(b-1\right)\overline{\left(n-1\right)}$
for some $k$ and $1\le b\le n-1$. Hence, $b$ and $b-1$ are both
elements of $\Delta$, which contradicts that $D$ is sparse by assumption.
Therefore, the $n$-ary representation with digits in $\Delta^{+}$
is unique.
\end{proof}
Since the $\Delta^{+}$ representation is unique whenever $D$ is
sparse, we can classify the set $C\cap\left(C+\alpha\right)$ in terms
of the digits $\left\{ \alpha_{k}\right\} $.
\begin{lem}
\label{Sec-4-Lem:Classify C-cap-(C+a) using digits}Let $D$ be sparse.
If $\alpha\in F$ has $\Delta^{+}$ representation $\alpha=0._{n}\alpha_{1}\alpha_{2}\ldots$,
then 
\[
C\cap\left(C+\alpha\right)=\left\{ 0._{n}x_{1}x_{2}\ldots\mid x_{k}\in D\cap\left(D+\alpha_{k}\right)\right\} .
\]
\end{lem}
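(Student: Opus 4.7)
The proof naturally splits into the two inclusions, and the forward direction $(\supseteq)$ is direct. Given a sequence of digits $x_k \in D \cap (D+\alpha_k)$, set $x := \sum_{k\ge 1} x_k n^{-k}$. Since each $x_k \in D$, the number $x$ lies in $C$. By assumption each $x_k - \alpha_k$ lies in $D$, so $\sum_{k\ge 1}(x_k - \alpha_k)n^{-k}$ is a legitimate $n$-ary expansion of $x - \alpha$ with every digit in $D$, whence $x - \alpha \in C$ and therefore $x \in C \cap (C+\alpha)$.

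The reverse direction $(\subseteq)$ carries the real content, and the plan is to reuse the computation driving the proof of Theorem~\ref{Sec-4-Thm:Translate using integer sequence}. Because $\alpha_i \in \Delta^+ \subseteq \{0,\ldots,n-1\}$, the $\Delta^+$ expansion coincides with the standard $n$-ary expansion, and sparsity of $D$ ensures disjointness of the relevant Minkowski sum, giving
\[
C_k \cap (C_k + \lfloor\alpha\rfloor_k) \;=\; \sum_{i=1}^{k}\frac{1}{n^{i}}\bigl(D\cap(D+\alpha_i)\bigr) + [0,1/n^k].
\]
Refining $j-k$ further times yields, for every $j \ge k$,
\[
C_j \cap (C_j + \lfloor\alpha\rfloor_k) \;=\; \sum_{i=1}^{k}\frac{1}{n^{i}}\bigl(D\cap(D+\alpha_i)\bigr) + \sum_{i=k+1}^{j}\frac{1}{n^{i}}D + [0,1/n^j],
\]
and intersecting over $j$ collapses this to $C\cap(C+\lfloor\alpha\rfloor_k) = T_k + \tfrac{1}{n^k}C$, where $T_k := \sum_{i=1}^{k}\tfrac{1}{n^{i}}(D\cap(D+\alpha_i))$. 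As $k\to\infty$, $\lfloor\alpha\rfloor_k \to \alpha$ while $T_k + \tfrac{1}{n^k}C$ Hausdorff-converges to the right-hand side $X := \{0._n x_1 x_2 \cdots : x_k \in D \cap (D + \alpha_k)\}$.

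The main obstacle is transferring this Hausdorff information about $C\cap(C+\lfloor\alpha\rfloor_k)$ into an actual containment for $C\cap(C+\alpha)$, since a point of $C\cap(C+\alpha)$ need not lie in any single $C\cap(C+\lfloor\alpha\rfloor_k)$. My approach is an approximation argument in the spirit of Lemma~\ref{Sec-3-Lem:Sequence of Sets}: for each $x \in C\cap(C+\alpha)$ and each $k$, pick out the unique $n$-ary interval of $C_k$ containing $x$, invoke Lemma~\ref{Sec-2-Lem:empty-can-be-ignored} together with sparsity (via Lemma~\ref{Sec-2-Lem:Sparcity_Requirement}) to identify this interval as an interval or potential-interval case of $C_k \cap (C_k + \lfloor\alpha\rfloor_k)$, and read off the first $k$ digits of $x$. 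Sparsity, combined with the uniqueness of the $\Delta^+$ expansion (Lemma~\ref{Sec-4-Lem:Unique Delta Representations}), forces these digits to lie in $D \cap (D+\alpha_i)$; letting $k \to \infty$ then recovers the full digit sequence of $x$ witnessing membership in $X$.
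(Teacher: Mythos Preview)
Your approach works but takes a considerably longer route than the paper's. The paper gives a direct digit argument: for $x \in C \cap (C+\alpha)$ write $x = y + \alpha$ with $y \in C$, expand both with digits in $D$, and show $x_k = y_k + \alpha_k$ for every $k$ (no carrying). The no-carry claim is a short case split: if $y_k + \alpha_k < n$ but $x_k \neq y_k + \alpha_k$, then $x$ acquires two $n$-ary expansions with all digits in $\Delta^+$, contradicting Lemma~\ref{Sec-4-Lem:Unique Delta Representations}; if $y_k + \alpha_k \geq n$, the carry forces $x_j = n-1$ for $j \geq k$ and hence both $\alpha_k$ and $\alpha_k - 1 = (n-1) - y_k$ lie in $\Delta$, contradicting sparsity. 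That is the entire proof. Your geometric route via the Minkowski formula $C_k \cap (C_k + \lfloor\alpha\rfloor_k) = T_k + [0,1/n^k]$ and the interval-case machinery of Section~\ref{Sec-2:Old-Stuff} also reaches the conclusion, but invokes a good deal of infrastructure (and an approximation step in the spirit of Lemma~\ref{Sec-3-Lem:Sequence of Sets}) for what is, at bottom, an elementary no-carrying observation.

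One point in your final paragraph needs tightening: to read off the first $k$ digits of $x$ as elements of $D \cap (D+\alpha_i)$ you need $J$ to be in the \emph{interval} case specifically, not merely ``interval or potential-interval''---a potential-interval $J$ has left endpoint outside $T_k$. This does hold: your own formula already exhibits $C_k \cap (C_k + \lfloor\alpha\rfloor_k)$ as a union of full interval cases, equivalently $\sigma_\alpha(k) = 1$ (each $\alpha_k \in \Delta$, and sparsity forbids $\alpha_k \in \Delta - 1$). But this should be stated explicitly rather than subsumed under ``sparsity, combined with uniqueness.''
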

\begin{proof}
Let $x\in C$ be arbitrary and choose $y\in C$ such that $x=y+\alpha$.
Denote $x:=0._{n}x_{1}x_{2}\ldots$ and $y:=0._{n}y_{1}y_{2}\ldots$
such that $x_{k},y_{k}\in D$. Suppose $x_{k}\neq y_{k}+\alpha_{k}$
for some $k$. Without loss of generality, suppose $k-1=\min\left\{ j\mid x_{j}\neq y_{j}+\alpha_{j}\right\} $.

If $y_{k}+\alpha_{k}<n$ then $x$ has two $n$-ary representations
with digits strictly contained in $D\subseteq\Delta^{+}$, which is
a contradiction by Lemma \ref{Sec-4-Lem:Unique Delta Representations}. 

If $y_{k}+\alpha_{k}\ge n$ then $0\le y_{k},\alpha_{k}<n$ implies
$0\le y_{k}+\alpha_{k}-n<n$. Thus, $x=0._{n}x_{1}\ldots x_{k-1}x_{k}\ldots=0._{n}x_{1}\ldots\left(x_{k-1}+1\right)\left(y_{k}+\alpha_{k}-n\right)\ldots$
has two different $n$-ary representations. Therefore $x_{j}=n-1\in D$
and $y_{j}+\alpha_{j}-n=0$ for all $j\ge k$. However, $\alpha_{k}\in\Delta$
by definition and $\alpha_{k}-1=\left(n-1\right)-y_{k}$ is some element
of $D-D=\Delta$, which contradicts that $D$ is sparse.

Hence, $x_{k}=y_{k}+\alpha_{k}$ for each $k$ and $C\cap\left(C+\alpha\right)\subseteq\left\{ 0._{n}x_{1}x_{2}\cdots\mid x_{k}\in D\cap\left(D+\alpha_{k}\right)\right\} $
since $x$ is arbitrary. The reverse inclusion follows immediately
since $C=\left\{ 0._{n}x_{1}x_{2}\cdots\mid x_{k}\in D\right\} $.
\end{proof}
When $\alpha$ has $\Delta^{+}$ representation, then $\inf\left(C\cap\left(C+\alpha\right)\right)=\sum_{k=1}^{\infty}n^{-k}\cdot\min\left(D\cap\left(D+\alpha_{k}\right)\right)$
according to Lemma \ref{Sec-4-Lem:Classify C-cap-(C+a) using digits}.
For each $\delta\in\Delta^{+}$, define $D_{\delta}:=D\cap\left(D+\delta\right)-\min\left(D\cap\left(D+\delta\right)\right)$
so that $0\in D_{\delta}\subseteq\Delta^{+}$ and 
\begin{equation}
C(\alpha)=C\cap\left(C+\alpha\right)-\sum_{k=1}^{\infty}\frac{\min\left(D\cap\left(D+\alpha_{k}\right)\right)}{n^{k}}=\left\{ \sum_{k=1}^{\infty}\frac{x_{k}}{n^{k}}\mid x_{k}\in D_{\alpha_{k}}\right\} .\label{Sec-4-Eq:C_alpha defined}
\end{equation}

This leads to the following Corollary to Theorem \ref{Sec-4-Thm:Translate using integer sequence}:
\begin{cor}
\label{Sec-4-Cor:C(a) equals C(g) iff Da_k equals Dg_k}Let $D$ be
sparse and suppose $0._{n}\alpha_{1}\alpha_{2}\ldots$ is a $\Delta^{+}$
representation for $\alpha$. Then $C\left(\alpha\right)=C\left(\gamma\right)$
if and only if $D_{\alpha_{k}}=D_{\gamma_{k}}$ for all $k$.
\end{cor}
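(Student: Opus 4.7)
The plan is to read off both directions from equation (\ref{Sec-4-Eq:C_alpha defined}) combined with the uniqueness of $\Delta^{+}$ representations (Lemma \ref{Sec-4-Lem:Unique Delta Representations}). The ``if'' direction is immediate: if $D_{\alpha_k}=D_{\gamma_k}$ for every $k$, then the description of $C(\alpha)$ and $C(\gamma)$ on the right-hand side of (\ref{Sec-4-Eq:C_alpha defined}) produces the same set of sums, so $C(\alpha)=C(\gamma)$.

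For the converse, I would first record two small facts about the sets $D_{\alpha_k}$: that $D_{\alpha_k}\subseteq\Delta^{+}$ (since $D\cap(D+\alpha_k)\subseteq D$, and subtracting the minimum produces nonnegative differences of elements of $D$, which lie in $\Delta$), and that $0\in D_{\alpha_k}$ by construction. Consequently, every point of $C(\alpha)$ is exhibited in (\ref{Sec-4-Eq:C_alpha defined}) as a $\Delta^{+}$ representation whose $k$-th digit lies in $D_{\alpha_k}$, and by Lemma \ref{Sec-4-Lem:Unique Delta Representations} this representation is unique; the analogous statement holds for $C(\gamma)$.

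Now suppose $C(\alpha)=C(\gamma)$, fix $k$, and let $d\in D_{\alpha_k}$. The main step is to exhibit a point $y\in C(\alpha)$ whose $k$-th digit must be $d$ in any $\Delta^{+}$ representation. The cleanest choice is to place $d$ at position $k$ and $0$ at every other position, using $0\in D_{\alpha_j}$ for $j\neq k$, giving $y:=d/n^{k}\in C(\alpha)=C(\gamma)$. Since $y\in C(\gamma)$, it also admits a representation with digits in $D_{\gamma_j}$; uniqueness of $\Delta^{+}$ representations forces the $k$-th digit of that representation to equal $d$, so $d\in D_{\gamma_k}$. Thus $D_{\alpha_k}\subseteq D_{\gamma_k}$, and swapping the roles of $\alpha$ and $\gamma$ gives equality.

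The only potential obstacle is checking that Lemma \ref{Sec-4-Lem:Unique Delta Representations} genuinely applies in this setting, but this reduces precisely to the inclusion $D_{\alpha_k}\subseteq\Delta^{+}$ established above; no further machinery is needed.
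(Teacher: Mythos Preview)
Your argument is correct and matches the paper's intended route: the paper states this as an immediate corollary (without proof) of the description
\[
C(\alpha)=\Bigl\{\sum_{k\ge1}x_k n^{-k}\;\Big|\;x_k\in D_{\alpha_k}\Bigr\}
\]
in equation (\ref{Sec-4-Eq:C_alpha defined}), and you have filled in exactly the expected details---the ``if'' direction is read off directly, and the ``only if'' direction uses the inclusion $D_{\alpha_k}\subseteq\Delta^{+}$ together with the uniqueness of $\Delta^{+}$ representations (Lemma \ref{Sec-4-Lem:Unique Delta Representations}) applied to the test point $d/n^{k}$. The one tacit hypothesis you are relying on---that $\gamma$ also has a $\Delta^{+}$ representation so that the digits $\gamma_k$ and the sets $D_{\gamma_k}$ are well defined---is implicit in the paper's statement as well; your proof goes through verbatim under that reading.
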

We now prove the second part of Theorem \ref{Sec-1-Thm:Rational_tau}
when $C\left(\alpha\right)$ is a finite set.
\begin{lem}
\label{Sec-4-Lem:Finite C_a}Let $D$ be sparse and $\alpha\in F$
given. If $C(\alpha)$ is a finite set then $\alpha$ is equivalent
to a rational. In this case, $C\cap\left(C+\alpha\right)$ is the
finite, disjoint union of trivial self-similar sets.\end{lem}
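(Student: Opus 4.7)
The plan is to exploit the explicit digit description
\[
C(\alpha) = \left\{\sum_{k=1}^\infty \frac{x_k}{n^k} \,\Big|\, x_k \in D_{\alpha_k}\right\}
\]
supplied by equation \eqref{Sec-4-Eq:C_alpha defined}, combined with the uniqueness of $\Delta^+$ representations (Lemma \ref{Sec-4-Lem:Unique Delta Representations}). By Corollary \ref{Sec-4-Cor:Positive Delta representations.} I may assume $\alpha$ admits a $\Delta^+$ representation $\alpha = 0._n\alpha_1\alpha_2\cdots$. Since each $D_{\alpha_k}$ is a subset of $\Delta^+$, Lemma \ref{Sec-4-Lem:Unique Delta Representations} makes the coordinate map $\prod_k D_{\alpha_k} \to C(\alpha)$ a bijection, so $|C(\alpha)| = \prod_{k=1}^\infty |D_{\alpha_k}|$.

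Finiteness of $C(\alpha)$ therefore forces $|D_{\alpha_k}| = 1$ for every $k$ beyond some threshold $K$. Because $0 \in D_{\alpha_k}$ by construction, this means $D_{\alpha_k} = \{0\}$ for all $k > K$. Observe that $\alpha_k = 0$ would give $D_{\alpha_k} = D$, a set with at least two elements, so $\alpha_k > 0$ for every $k > K$. Setting $\delta := \alpha_{K+1}$, I define the rational
\[
t := 0._n \alpha_1 \alpha_2 \cdots \alpha_K \overline{\delta}.
\]
Its $\Delta^+$ digits satisfy $t_k = \alpha_k$ for $k \leq K$ and $t_k = \delta$ for $k > K$, whence $D_{t_k} = D_{\alpha_k}$ for every $k$ (with both sides equal to $\{0\}$ on the tail). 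Corollary \ref{Sec-4-Cor:C(a) equals C(g) iff Da_k equals Dg_k} then gives $C(\alpha) = C(t)$, showing that $\alpha$ is translation equivalent to the rational $t$.

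For the second assertion, $C \cap (C+\alpha)$ is a translate of the finite set $C(\alpha)$ and is therefore itself a finite collection of singletons; each singleton $\{x_0\}$ is the trivial self-similar attractor of any contraction $y \mapsto r y + (1-r) x_0$ with $r \in (0,1)$, exhibiting $C \cap (C + \alpha)$ as a finite disjoint union of trivial self-similar sets. There is no real obstacle in this argument; the only substantive step is recognizing that the product-cardinality identity converts finiteness of $C(\alpha)$ into an eventually-constant condition on the sets $D_{\alpha_k}$, which is precisely the data required to manufacture an equivalent rational by periodically repeating a single digit.
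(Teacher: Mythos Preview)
Your proof is correct and follows essentially the same route as the paper: both arguments show that $D_{\alpha_k}=\{0\}$ for all sufficiently large $k$, then build a rational $\gamma$ with $D_{\gamma_k}=D_{\alpha_k}$ for every $k$ and invoke Corollary~\ref{Sec-4-Cor:C(a) equals C(g) iff Da_k equals Dg_k}. The only cosmetic differences are that the paper establishes finiteness of the set $\{k:\{0\}\subsetneq D_{\alpha_k}\}$ by injecting $[0,1]$ into $C(\alpha)$ rather than via your product-of-cardinalities observation, and the paper repeats the digit $d_m$ (which always satisfies $D_{d_m}=\{0\}$) instead of your $\delta=\alpha_{K+1}$.
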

\begin{proof}
Suppose $C(\alpha)$ is finite and let $K:=\left\{ k\mid\left\{ 0\right\} \varsubsetneqq D_{\alpha_{k}}\right\} $.
Suppose $K=\left\{ k_{1}<k_{2}<\cdots\right\} $ is an infinite subset
of $\mathbb{N}$. For any $x=0._{2}x_{1}x_{2}\ldots$ in $\left[0,1\right]$
with $x_{k}\in\left\{ 0,1\right\} $, define 
\[
f\left(x\right):=\sum_{j=1}^{\infty}\left(\frac{x_{j}}{n^{k_{j}}}\cdot\min\left\{ a>0\mid a\in D_{\alpha_{k_{j}}}\right\} \right).
\]
Thus, $f\left(x\right)\in C(\alpha)$ for all $x\in\left[0,1\right]$
and $f\left(x\right)\neq f\left(y\right)$ for any $x\neq y$ so that
$f\left(\left[0,1\right]\right)$ is an uncountably infinite subset
of $C\left(\alpha\right)$. This contradicts the assumption that $C(\alpha)$
is finite, hence $K$ is either a finite subset of $\mathbb{N}$ or
empty. If $K$ is empty then $D_{\alpha_{k}}=\left\{ 0\right\} $
for all $k$ and $C(\alpha)=\left\{ 0\right\} =C\left(\frac{d_{m}}{n-1}\right)$.

Suppose $K$ is finite. Let $k=\max\left(K\right)$ and define $\gamma:=0._{n}\alpha_{1}\alpha_{2}\cdots\alpha_{k}\overline{d_{m}}$
so that $D\cap\left(D+\alpha_{j}\right)=D\cap\left(D+\gamma_{j}\right)$
for each $j\le k$. Since $D\cap\left(D+\alpha_{j}\right)=\left\{ d_{i_{j}}\right\} $
and $D\cap\left(D+\gamma_{j}\right)=\left\{ d_{m}\right\} $ for $j>k$
then $D_{\alpha_{j}}=\left\{ 0\right\} =D_{\gamma_{j}}$. Hence, $C(\alpha)=C(\gamma)$
so that $\alpha$ is translation equivalent to $\gamma$. 
\end{proof}
The proof of Lemma \ref{Sec-4-Lem:Finite C_a} shows that $C\cap\left(C+\alpha\right)$
is either finite or uncountably infinite. Note that $\alpha$ need
not admit finite $n$-ary representation in the proof of Lemma \ref{Sec-4-Lem:Finite C_a}.
Example \ref{Sec-4-Ex:Finite set with Irrational g} exhibits an irrational
number $\alpha$ such that $C\left(\alpha\right)$ is finite.
\begin{example}
\label{Sec-4-Ex:Finite set with Irrational g}Let $D=\left\{ 0,5,7\right\} $,
$n=8$, and $\alpha=0._{n}0\overline{7}$ so that $C\cap\left(C+\alpha\right)=\left\{ \frac{1}{8},\frac{3}{4},1\right\} $.
By defining $\gamma$ such that $\gamma_{k}=\alpha_{k}$ except $\gamma_{2k}=2$
on a sparse set of $k$'s (larger than $1$) then $C\cap\left(C+\gamma\right)=C\cap\left(C+\alpha\right)$
by Theorem \ref{Sec-4-Thm:Translate using integer sequence} yet $\gamma$
does not admit finite $n$-ary representation.\end{example}
\begin{lem}
\label{Sec-4-Lem:Subset_Equivalence}Let $D$ be sparse and $\alpha\in F$
given. There exist nonnegative integers $k,q$ such that $D_{\alpha_{j}}\subseteq D_{\alpha_{j+q}}$
for all $j>k$ if and only if $\alpha$ is translation equivalent
to a rational number.\end{lem}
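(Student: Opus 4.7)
The plan is to use Corollary \ref{Sec-4-Cor:C(a) equals C(g) iff Da_k equals Dg_k} as the bridge between geometry and combinatorics: for elements of $F(\Delta^+)$, the translation equivalence $C(\alpha)=C(\gamma)$ is the same as the pointwise equality $D_{\alpha_k}=D_{\gamma_k}$ for all $k$. With this in hand, each direction becomes a statement about the sequences $(D_{\alpha_k})_{k\geq1}$ of finite subsets of $\Delta^+$.

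For the forward direction, assume $D_{\alpha_j}\subseteq D_{\alpha_{j+q}}$ for all $j>k$. Then, for each fixed residue $r\in\{1,\dots,q\}$, the chain $D_{\alpha_{r+mq}}\subseteq D_{\alpha_{r+(m+1)q}}\subseteq\cdots$ is an increasing chain in the \emph{finite} power set of $\Delta^+$ and therefore stabilizes after at most $|\Delta^+|$ strict inclusions. Combining the $q$ residue classes yields $M\geq k$ such that $D_{\alpha_j}=D_{\alpha_{j+q}}$ for every $j>M$. Define $\gamma\in F(\Delta^+)$ by $\gamma_j=\alpha_j$ for $1\leq j\leq M+q$ and $\gamma_{j+q}=\gamma_j$ for $j>M$. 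Because $\Delta^+\subseteq\{0,\dots,n-1\}$, the $\Delta^+$ representation of $\gamma$ is simply its $n$-ary expansion, which is eventually periodic with period $q$, so $\gamma$ is rational. The stabilization forces $D_{\gamma_j}=D_{\alpha_j}$ for every $j$, and Corollary \ref{Sec-4-Cor:C(a) equals C(g) iff Da_k equals Dg_k} upgrades this to $C(\alpha)=C(\gamma)$.

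For the converse, let $t\in F^+$ be rational with $C(\alpha)=C(t)$. The strategy is to build a translation-equivalent $\widetilde{t}\in F(\Delta^+)$ that is itself rational and whose $\Delta^+$ digits are explicitly eventually periodic; the subset (in fact equality) condition for $(D_{\alpha_j})$ then follows from Corollary \ref{Sec-4-Cor:C(a) equals C(g) iff Da_k equals Dg_k} applied to $C(\alpha)=C(\widetilde{t})$. To obtain an eventually periodic $\Delta$ representation of $t$, I use a dynamical argument on $F$: fix any deterministic choice $x\mapsto\alpha(x)\in\Delta$ with $T(x):=nx-\alpha(x)\in F$ (available through the self-similar identity $F=\bigcup_{\delta\in\Delta}\frac{1}{n}(F+\delta)$, for instance by always selecting the largest admissible $\alpha(x)$), and iterate starting from $t$. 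Since $\alpha(x)\in\mathbb{Z}$, the map $T$ preserves rationality and does not increase the denominator, so the forward orbit $\{T^k(t)\}_{k\geq0}$ lies in the \emph{finite} set of rationals in $F$ whose denominator divides that of $t$ and must eventually cycle. Hence $(\alpha_k)$ is eventually periodic, and Corollary \ref{Sec-4-Cor:Positive Delta representations.} supplies $\widetilde{t}:=\sum_{k\geq1}|\alpha_k|/n^k\in F(\Delta^+)$ translation equivalent to $t$; its $\Delta^+$ digits $(|\alpha_k|)$ inherit the periodicity, so $\widetilde{t}$ is rational, as required.

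The only step requiring real care is the dynamical construction of the eventually periodic $\Delta$ representation: one must check that $T$ is well defined on all of $F$ (which follows from self-similarity) and that rationality is preserved with bounded denominator (which is automatic since $\alpha(x)\in\mathbb{Z}$). A technically easier alternative, avoiding the dynamics, is to apply Lemma \ref{Sec-3-Lem:Constructing y Equiv t With Sigma_y=00003D1} directly to $t$, handling the case where $t$ has a finite $n$-ary representation separately (either by Lemma \ref{Sec-4-Lem:Finite C_a} when $C(\alpha)$ is finite, or by first passing to a translation-equivalent rational with non-terminating representation): this produces a rational $y$ with $\sigma_y\equiv1$ and $C(t)=C(y)$. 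Because $D$ is sparse, $\Delta\cap(\Delta-1)=\varnothing$, so the condition $\sigma_y\equiv1$ together with the definition of $\xi$ forces every $n$-ary digit of $y$ to lie in $\Delta^+$; hence the $n$-ary expansion of $y$ is itself an eventually periodic $\Delta^+$ representation, and the remainder of the argument proceeds exactly as above.
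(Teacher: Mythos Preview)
Your proof is correct. The forward direction---stabilize the $q$ increasing chains of subsets of the finite set $\Delta^+$, then define the rational $\gamma$ by making the $\Delta^+$ digits of $\alpha$ periodic past the stabilization point---is exactly the paper's argument.

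For the converse the paper is terser than you are: it simply writes the rational as $\gamma=0._n\gamma_1\cdots\gamma_k\overline{\gamma_{k+1}\cdots\gamma_{k+q}}$, invokes equation~(\ref{Sec-4-Eq:C_alpha defined}) to get $D_{\alpha_j}=D_{\gamma_j}$ for all $j$, and reads off $D_{\alpha_j}=D_{\gamma_j}=D_{\gamma_{j+q}}=D_{\alpha_{j+q}}$. Implicit in this is that $\gamma$ may be taken in $F(\Delta^+)$ with its $n$-ary expansion serving as its (necessarily eventually periodic) $\Delta^+$ expansion, so that Corollary~\ref{Sec-4-Cor:C(a) equals C(g) iff Da_k equals Dg_k} applies. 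You make this reduction explicit and offer two routes. Your dynamical argument---the shift $T(x)=nx-\alpha(x)$ on $F$ has finite forward orbit on any rational because it never enlarges the denominator and $F$ is bounded---is a clean, self-contained way to manufacture an eventually periodic $\Delta$ representation of any rational in $F$, after which Corollary~\ref{Sec-4-Cor:Positive Delta representations.} yields a rational in $F(\Delta^+)$; this bypasses Lemma~\ref{Sec-3-Lem:Constructing y Equiv t With Sigma_y=00003D1} entirely. Your alternative via Lemma~\ref{Sec-3-Lem:Constructing y Equiv t With Sigma_y=00003D1}, together with the observation that sparsity gives $\Delta\cap(\Delta-1)=\varnothing$ so that $\sigma_y\equiv 1$ forces every $n$-ary digit of $y$ into $\Delta^+$, is essentially what the paper is tacitly relying on. Your handling of the finite-$n$-ary-representation case in that alternative is a bit loose, but since the dynamical route covers all cases uniformly this does not affect the overall correctness.
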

\begin{proof}
Suppose $D_{\alpha_{j}}\subseteq D_{\alpha_{j+q}}$ for all $j>k$.
Since $D_{\alpha_{j}}\subset\left\{ 0,1,\ldots,d_{m}\right\} $ for
all $j$, then for each $1\le i\le q$ there exists a chain 
\[
D_{\alpha_{k+i}}\subseteq D_{\alpha_{k+i+q}}\subseteq\cdots\subseteq D_{\alpha_{k+i+jq}}\subseteq\left\{ 0,1,\ldots,d_{m}\right\} .
\]

Therefore equality holds for all $D_{\alpha_{k+i+jq}}$ after a certain
point. For each $i$, let $h_{i}$ be a value such that $D_{\alpha_{k+i+h_{i}q}}=D_{\alpha_{k+i+\left(h_{i}+j\right)q}}$
for all $j\ge0$. If $h:=\max_{i}\left(h_{i}\right)$ then $D_{\alpha_{k+i+hq}}=D_{\alpha_{k+i+\left(h+j\right)q}}$
for all $1\le i\le q$ and $j\ge0$. Let $\gamma:=0._{n}\alpha_{1}\cdots\alpha_{k+hq}\overline{\alpha_{k+hq+1}\cdots\alpha_{k+\left(h+1\right)q}}$.
Then $D_{\alpha_{j}}=D_{\gamma_{j}}$ for all $j\in\mathbb{N}$ so
that $C\left(\alpha\right)=C\left(\gamma\right)$.

Conversely, suppose $\gamma=0._{n}\gamma_{1}\cdots\gamma_{k}\overline{\gamma_{k+1}\cdots\gamma_{k+q}}$
is translation equivalent to $\alpha$. Then $C\left(\alpha\right)=C\left(\gamma\right)$
and $D_{\alpha_{j}}=D_{\gamma_{j}}$for all $j\in\mathbb{N}$ by equation
(\ref{Sec-4-Eq:C_alpha defined}). Thus, $D_{\alpha_{j}}=D_{\gamma_{j}}=D_{\gamma_{j+q}}=D_{\alpha_{j+q}}$
for all $j>k$.
\end{proof}
We now have the tools required to prove the second half of Theorem
\ref{Sec-1-Thm:Rational_tau}.
\begin{thm}
\label{Sec-4-Thm:C_alpha-cap-[0,e] is self-similar}Let $D$ be sparse
and $\alpha\in F$ be given. Suppose there exists $\varepsilon>0$
such that $C(\alpha)\cap\left[0,\varepsilon\right]$ is a self-similar
set generated by similarities $f_{j}(x)=r_{j}x+b_{j}$ where $r_{i}=n^{-q_{i}}$
for some $q_{i}\in\mathbb{Z}$. Then $\alpha$ is translation equivalent
to a rational number.\end{thm}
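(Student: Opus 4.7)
The plan is to reduce Theorem \ref{Sec-4-Thm:C_alpha-cap-[0,e] is self-similar} to verifying the hypothesis of Lemma \ref{Sec-4-Lem:Subset_Equivalence}: find integers $k\geq 0$ and $q>0$ with $D_{\alpha_j}\subseteq D_{\alpha_{j+q}}$ for all $j>k$. If $C(\alpha)$ is finite, Lemma \ref{Sec-4-Lem:Finite C_a} already supplies the desired translation equivalence, so I will assume $C(\alpha)$ is infinite, and work with the $\Delta^+$-representation of $\alpha$ afforded by Corollary \ref{Sec-4-Cor:Positive Delta representations.}.

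The first step is to locate the similarity with fixed point $0$. Set $K:=C(\alpha)\cap[0,\varepsilon]$. Since $0\in C(\alpha)$ (take all digits equal to $0$) and $K\subseteq[0,\varepsilon]$, we have $\min K=0$. In any IFS $\{f_j(x)=n^{-q_j}x+b_j\}$ with positive contractions, $\min K=\min_j b_j$, so some $b_{j_0}=0$ and consequently $f_{j_0}(x)=n^{-q_0}x$ where $q_0:=q_{j_0}$. This yields the key inclusion $n^{-q_0}K\subseteq K$. The second step is to describe $C(\alpha)$ at small scales via sparsity. Because $D$ is sparse, every $D_{\alpha_j}$ is a subset of $\Delta^+$ whose nonzero elements are $\geq 2$. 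Setting
\[
C^{(M)}(\alpha):=\Bigl\{\sum_{j=1}^{\infty}y_j/n^j : y_j\in D_{\alpha_{M+j}}\Bigr\},
\]
a short comparison shows that for any $M$ with $n^{-M}<\varepsilon$,
\[
C(\alpha)\cap[0,n^{-M}]=n^{-M}\cdot C^{(M)}(\alpha);
\]
indeed, if $\sum y_j/n^j\le n^{-M}$ had smallest nonzero index $j_0\le M$, then $y_{j_0}\geq 2$ would force $\sum y_j/n^j\geq 2/n^{j_0}>n^{-M}$, a contradiction.

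Combining the two observations, applying $n^{-q_0}$ to $C(\alpha)\cap[0,n^{-M}]\subseteq K$ lands inside $C(\alpha)\cap[0,n^{-(M+q_0)}]=n^{-(M+q_0)}C^{(M+q_0)}(\alpha)$, which rescales to the set inclusion $C^{(M)}(\alpha)\subseteq C^{(M+q_0)}(\alpha)$. To upgrade this to a digit-wise inclusion, fix any nonzero $a\in D_{\alpha_{M+j}}$. Then $a/n^{j}\in C^{(M)}(\alpha)\subseteq C^{(M+q_0)}(\alpha)$ has a $\Delta^+$-representation with digits in $D_{\alpha_{M+q_0+\cdot}}$; by the uniqueness of such representations (Lemma \ref{Sec-4-Lem:Unique Delta Representations}) this representation must be the single-digit one, giving $a\in D_{\alpha_{M+q_0+j}}$. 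Hence $D_{\alpha_{M+j}}\subseteq D_{\alpha_{M+q_0+j}}$ for every $j\ge 1$. Lemma \ref{Sec-4-Lem:Subset_Equivalence} with $k=M$, $q=q_0$ then produces a rational translation-equivalent to $\alpha$.

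The main obstacle is justifying the zoom identity $C(\alpha)\cap[0,n^{-M}]=n^{-M}C^{(M)}(\alpha)$: this is precisely where sparsity of $D$ is essential, as without it the minimum positive element of some $D_{\alpha_j}$ could equal $1$, creating extraneous points in $C(\alpha)\cap[0,n^{-M}]$ whose presence would obstruct both the clean rescaling and the subsequent extraction of digit-wise containment via unique representations. The only other delicate point is verifying that some translate $b_{j_0}$ vanishes, for which I use that the minimum of the self-similar attractor $K$ is $0$ and equals the smallest of the $b_j$.
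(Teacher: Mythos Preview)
Your proof is correct and follows essentially the same approach as the paper's: reduce to a $\Delta^{+}$ representation, identify the similarity $f_1(x)=n^{-q}x$ fixing $0$, push single-digit points $d\cdot n^{-j}\in C(\alpha)$ through $f_1$, and use uniqueness of $\Delta^{+}$ representations (Lemma \ref{Sec-4-Lem:Unique Delta Representations}) to read off $D_{\alpha_j}\subseteq D_{\alpha_{j+q}}$, then invoke Lemma \ref{Sec-4-Lem:Subset_Equivalence}. The only cosmetic difference is that you package the argument through an auxiliary ``zoom'' identity $C(\alpha)\cap[0,n^{-M}]=n^{-M}C^{(M)}(\alpha)$ and the set inclusion $C^{(M)}(\alpha)\subseteq C^{(M+q_0)}(\alpha)$, whereas the paper applies $f_1$ directly to the point $d\cdot n^{-j}$ and extracts $d\in D_{\alpha_{j+q_1}}$ in one line; your detour is sound but unnecessary.
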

\begin{proof}
According to Lemma \ref{Sub-4.1:Delta Translation Equivalence} we
may assume $\alpha$ has a $\Delta^{+}$ representation. Let $\varepsilon>0$
be a value such that $C\left(\alpha\right)\cap\left[0,\varepsilon\right]=T$
is a self-similar set. We may assume that $b_{1}<b_{2}<\cdots<b_{\ell}$
so that $b_{1}=0$ and $f_{1}\left(x\right)=x\cdot n^{-q_{1}}$. Choose
$k\in\mathbb{N}$ such that $\varepsilon\ge n^{-k}>0$ and let $j>k$
be arbitrary. Let $d\in D_{\alpha_{j}}$ be arbitrary so that $d\cdot n^{-j}\in T\subset C(\alpha)$
by equation (\ref{Sec-4-Eq:C_alpha defined}). We note that the representation
$d\cdot n^{-j}$ is unique by Lemma \ref{Sec-4-Lem:Unique Delta Representations}.
Thus, $f_{1}\left(d\cdot n^{-j}\right)=d\cdot r_{1}\cdot n^{-j}=d\cdot n^{-\left(j+q_{1}\right)}\in C\left(\alpha\right)$
and $d\in D_{\alpha_{j+q_{1}}}$. Since $j$ and $d$ are arbitrary,
then $D_{\alpha_{j}}\subseteq D_{\alpha_{j+q_{1}}}$ for all $j>k$
and $\alpha$ is translation equivalent to a rational number by Lemma
\ref{Sec-4-Lem:Subset_Equivalence}.
\end{proof}
This completes the proof of Theorem \ref{Sec-1-Thm:Rational_tau}.
Theorem \ref{Sec-4-Thm:C_alpha-cap-[0,e] is self-similar} shows that
if $C(\alpha)\cap\left[0,\varepsilon\right]$ is constructed by specific
similarity mappings, then $\alpha$ is translation equivalent to a
rational number and, by Theorem \ref{Sec-3-Thm:Rational_tau}, can
be expressed as 
\[
C\cap\left(C+\alpha\right)=\bigcup_{j=1}^{N}\left(C_{n^{2p},E}+\eta_{j}\right)
\]
for some $\eta_{1}<\eta_{2}<\cdots<\eta_{N}$.

\subsection{When is $C(\alpha)$ self-similar?}

Essentially, half of the answer to this question is provided by a
calculation on page 307 of \cite{LYZ11} and the other half by an
elaboration on the proof of Theorem \ref{Sec-4-Thm:C_alpha-cap-[0,e] is self-similar}. 

Note that, if $C(\alpha)$ is self-similar, then we may choose $k=0$
in the proof of Theorem \ref{Sec-4-Thm:C_alpha-cap-[0,e] is self-similar}.
Hence, this proof shows that for some $q>0$ we have 
\begin{equation}
D_{\alpha_{j}}\subseteq D_{\alpha_{j+q}}\text{ for all }j>0.\label{eq:sp-uniform}
\end{equation}
But this condition is not sufficient for $C(\alpha)$ to be self-similar.
For this we need the stronger condition that $\alpha$ is \emph{strongly
periodic} in the sense that there exists $\widetilde{D}_{\alpha_{j}}$
such that 
\begin{equation}
D_{\alpha_{j}}+\widetilde{D}_{\alpha_{j}}=D_{\alpha_{j+q}}\text{ for all }j>0.\label{eq:sp-sparse}
\end{equation}
Note that (\ref{eq:sp-uniform}) and (\ref{eq:sp-sparse}) are equivalent,
when $D$ is assumed to be a uniform set. Clearly, whether of not
a given $\alpha$ satisfies (\ref{eq:sp-uniform}) or (\ref{eq:sp-sparse})
depends on the set $D.$ 

The following is a restatement of Theorem \ref{Sec-1-Thm:C(t) self-similar iff }. 
\begin{thm}
\label{Sec-4-thm:self-similar}If $D$ is sparse, then $C\cap(C+\alpha)$
is self-similar generated by a finite set of similarities $f_{j}(x)=n^{-q}x+b_{j}$
with $q\in\mathbb{N}$ if and only if $\alpha$ is strongly periodic. \end{thm}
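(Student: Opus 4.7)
The plan is to exploit the clean digit-by-digit description of $C(\alpha)$ supplied by Lemma \ref{Sec-4-Lem:Classify C-cap-(C+a) using digits} and equation \eqref{Sec-4-Eq:C_alpha defined}: after passing to a $\Delta^{+}$ representation via Corollary \ref{Sec-4-Cor:Positive Delta representations.}, we have $C(\alpha)=\{\sum_{k\ge1}x_{k}/n^{k}:x_{k}\in D_{\alpha_{k}}\}$, which I will regard as the infinite Minkowski series $\sum_{k\ge1}n^{-k}D_{\alpha_{k}}$. Uniqueness of $n$-ary expansions (Lemma \ref{Sec-4-Lem:Unique Delta Representations}) lets identities among such series be compared digit by digit, reducing the whole problem to arithmetic on the finite digit sets $D_{\alpha_{k}}\subseteq\{0,\ldots,d_{m}\}$.

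For the implication strong periodicity $\Rightarrow$ self-similar, assume sets $\widetilde{D}_{\alpha_{j}}$ and $q>0$ with $D_{\alpha_{j}}+\widetilde{D}_{\alpha_{j}}=D_{\alpha_{j+q}}$ for all $j\ge1$. Because $0\in D_{\alpha_{j+q}}\subseteq\mathbb{Z}_{\ge0}$ forces $0\in\widetilde{D}_{\alpha_{j}}$, the inclusion $D_{\alpha_{j}}\subseteq D_{\alpha_{j+q}}$ is automatic; the resulting chain inside $\{0,\ldots,d_{m}\}$ stabilizes, so after replacing $\widetilde{D}_{\alpha_{j}}$ by $\{0\}$ for all $j\ge J$ (some $J$, without destroying the defining identity) I may assume the family is eventually trivial. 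Splitting the series at level $q$ and substituting termwise yields
\[
C(\alpha)=\sum_{j=1}^{q}\frac{D_{\alpha_{j}}}{n^{j}}+n^{-q}\sum_{j=1}^{\infty}\frac{D_{\alpha_{j}}}{n^{j}}+n^{-q}\sum_{j=1}^{J-1}\frac{\widetilde{D}_{\alpha_{j}}}{n^{j}}=A+n^{-q}C(\alpha),
\]
where $A$ is a finite Minkowski sum of finite sets and hence finite. Hutchinson's theorem identifies $C(\alpha)$ as the unique nonempty compact attractor of the IFS $\{f_{b}(x)=n^{-q}x+b:b\in A\}$, and translation by $\inf(C\cap(C+\alpha))$ puts $C\cap(C+\alpha)$ in the required self-similar form.

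For the reverse implication, suppose $C\cap(C+\alpha)$ is self-similar with similarities $f_{j}(x)=n^{-q}x+b_{j}$. Translating by $-\inf(C\cap(C+\alpha))$ transports this to $C(\alpha)$ with offsets $b_{j}'$, and since $\min C(\alpha)=0$ I may assume $b_{1}'=0$, so $n^{-q}C(\alpha)\subseteq C(\alpha)$; comparing unique $n$-ary digits exactly as in the proof of Theorem \ref{Sec-4-Thm:C_alpha-cap-[0,e] is self-similar} then yields $D_{\alpha_{k}}\subseteq D_{\alpha_{k+q}}$ for every $k\ge1$. To upgrade this to strong periodicity, expand each $b_{j}'=\sum_{k}c_{j,k}/n^{k}$ with $c_{j,k}\in D_{\alpha_{k}}$ and set $\widetilde{D}_{\alpha_{k}}:=\{c_{j,k+q}:1\le j\le\ell\}$. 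The inclusion $f_{j}(C(\alpha))\subseteq C(\alpha)$ forces $c_{j,k+q}+D_{\alpha_{k}}\subseteq D_{\alpha_{k+q}}$, whence $\widetilde{D}_{\alpha_{k}}+D_{\alpha_{k}}\subseteq D_{\alpha_{k+q}}$. Conversely, every $y\in C(\alpha)$ lies in some $f_{j}(C(\alpha))$, and reading the $(k+q)$-th digit of $y=b_{j}'+n^{-q}x$ expresses $y_{k+q}$ as $c_{j,k+q}+x_{k}$ with $x_{k}\in D_{\alpha_{k}}$, giving $D_{\alpha_{k+q}}\subseteq\widetilde{D}_{\alpha_{k}}+D_{\alpha_{k}}$. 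Combining, $D_{\alpha_{k+q}}=D_{\alpha_{k}}+\widetilde{D}_{\alpha_{k}}$ for every $k\ge1$, which is strong periodicity.

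The main obstacle I foresee is the tacit claim, used throughout the reverse direction, that the $n$-ary digit of $f_{j}(x)=b_{j}'+n^{-q}x$ at position $k+q$ is precisely the formal sum $c_{j,k+q}+x_{k}$, i.e., that no carries occur. Since the digits $c_{j,k+q}$ and $x_{k}$ can each be as large as $d_{m}$, their sum can in principle exceed $n-1$. To rule this out I will pick the largest position $k_{0}$ at which a carry could occur, engineer $x\in C(\alpha)$ with $x_{k_{0}-q}$ tuned to trigger the carry and $x_{k}=0$ elsewhere, and compare the actual $n$-ary digits of $f_{j}(x)$ against those of $f_{j}(0)=b_{j}'$. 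After tracing any cascading $+1$'s downward through positions where the relevant $c_{j,k}$ equal $n-1$, the comparison forces two integers differing by exactly $1$ into some $D_{\alpha_{\ell}}$, which contradicts sparsity's guarantee that consecutive elements of every $D_{\alpha_{\ell}}$ differ by at least $2$. Making this cascade argument rigorous (and handling the edge case in which the cascade would try to spill out of the fractional part) is the ``elaboration on the proof of Theorem \ref{Sec-4-Thm:C_alpha-cap-[0,e] is self-similar}'' alluded to in the text; once it is in hand, the rest of the forward direction is direct Minkowski arithmetic.
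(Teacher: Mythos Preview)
Your proposal is correct and follows essentially the same route as the paper: both directions hinge on the digit description $C(\alpha)=\sum_{k}n^{-k}D_{\alpha_{k}}$, with the forward implication obtained by unwinding strong periodicity into a Minkowski identity $C(\alpha)=A+n^{-q}C(\alpha)$ for a finite $A$, and the reverse implication by reading off $\widetilde{D}_{\alpha_{k}}$ from the digit expansions of the $b_{j}$ and comparing $\bigcup_{j}f_{j}(C(\alpha))$ with $C(\alpha)$. Your explicit carry/cascade argument is a genuine elaboration of what the paper compresses into the single clause ``since $D$ is sparse,'' and your bookkeeping (keeping the period $q$ and noting the $\widetilde{D}_{\alpha_{j}}$ are eventually $\{0\}$) is a mild variant of the paper's device of passing to period $pq$.
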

\begin{proof}
Suppose (\ref{eq:sp-sparse}) holds. Since (\ref{eq:sp-sparse}) implies
(\ref{eq:sp-uniform}) it follows that $D_{\alpha_{j}}=D_{\alpha_{j+q}}$
for all sufficiently large $j.$ Hence, for some $p>0,$ we have 
\[
D_{\alpha_{j}}+\widetilde{D}_{\alpha_{j}}=D_{\alpha_{j+pq}}\text{ when }j\leq pq\text{ and }D_{\alpha_{j}}=D_{\alpha_{j+pq}}\text{ when }j>pq.
\]
Consequently, 
\begin{equation}
D_{\alpha_{j+pqr}}=D_{\alpha_{j}}+\widetilde{D}_{\alpha_{j}},\text{ when }j\leq pq\text{ and }r\geq1.\label{Sec-4-eq:digit-self-sim}
\end{equation}
It follows now from the calculation on the top half of page 307 of
\cite{LYZ11} that, $C(\alpha)$ is a self-similar set. For the convenience
of the reader we sketch the details. Let $x\in C(\alpha).$ Use (\ref{Sec-4-Eq:C_alpha defined})
to write $x=\sum_{k}x_{k}n^{-k},$ with $x_{k}\in D_{\alpha_{k}}.$
By (\ref{Sec-4-eq:digit-self-sim}) we can write
\[
x_{k+pqr}=y_{r,k}+z_{r,k},y_{r,k}\in D_{\alpha_{k}},z_{r,k}\in\widetilde{D}_{\alpha_{k}}\text{ when 1\ensuremath{\leq}k\ensuremath{\leq}pq, 1\ensuremath{\leq}r}
\]
and $y_{0,k}=x_{k}$ when $1\leq k\leq pq.$ Then 
\begin{align*}
\sum_{k}x_{k}n^{-k} & =\sum_{r=0}^{\infty}\sum_{k=1}^{pq}x_{k+pqr}n^{-\left(k+pqr\right)}\\
 & =\sum_{k=1}^{pq}y_{0,k}n^{-k}+\sum_{r=1}^{\infty}n^{-pqr}\sum_{k=1}^{pq}(y_{r,k}+z_{r,k})n^{-k}\\
 & =\sum_{r=0}^{\infty}\left(\sum_{k=1}^{pq}\left(y_{r,k}+z_{r+1}n^{-pq}\right)n^{-k}\right)n^{-pqr}.
\end{align*}
It follows that $C(\alpha)$ is generated by the similarities
\[
f_{b}(x)=n^{-pq}x+b,b\in B,
\]
where $B=\left\{ \left.\sum_{k=1}^{pq}\left(y_{k}+z_{k}n^{-pq}\right)n^{-k}\right|y_{k}\in D_{\alpha_{k}},z_{k}\in\widetilde{D}_{\alpha_{k}}\right\} .$ 

On the other hand, suppose $C(\alpha)$ is generated by the similarities
$f_{j}(x)=n^{-q}x+b_{j},$ $j=1,2,\ldots,L.$ Since $0$ is in $C(\alpha)$
it follows that $b_{j}$ is in $C(\alpha)$ for all $j.$ Write
\[
b_{j}=\sum_{k}b_{j,k}n^{-k},\text{ with }b_{j,k}\in D_{\alpha_{k}}.
\]
Let $\widetilde{D}_{\alpha_{k}}=\left\{ b_{j,k+q}\mid j=1,2,\ldots,L\right\} .$
For any $x=\sum_{k}x_{k}n^{-k},$ $x_{k}\in D_{\alpha_{k}}$ we have
\[
f_{j}(x)=\sum_{k=1}^{q}b_{j,k}n^{-k}+\sum_{k=1}^{\infty}\left(b_{j,k+q}+x_{k}\right)n^{-\left(k+q\right)}.
\]
Since $D$ is sparse it follows that $b_{j,k+q}+x_{k}$ is in $D_{\alpha_{k+q}}.$
Consequently, $\widetilde{D}_{\alpha_{k}}+D_{\alpha_{k}}\subseteq D_{\alpha_{k+q}}.$
If one of these inclusions is strict, then $\bigcup_{j}f_{j}\left(C(\alpha)\right)$
would be a strict subset of $C(\alpha),$ by (\ref{Sec-4-Eq:C_alpha defined}).
Hence, (\ref{eq:sp-sparse}) holds. \end{proof}
\begin{example}
Let $D=\{0,2,4,6\},$ $n=7,$ and $\alpha=0.2\overline{0}.$ Then
if follows from Theorem \ref{Sec-4-thm:self-similar} that $C(\alpha)$
is self-similar. However, the self-similarities constructed in the
proof of Theorem \ref{Sec-4-thm:self-similar} do not satisfy the
open set condition. Hence, $C(\alpha)$ is perhaps better understood
in terms of Theorem \ref{Sec-1-Thm:Structure-when-t-is-rational}
where $C(\alpha)$ is described as a finite union of disjoint translates
of a deleted digits Cantor set. \end{example}
\begin{rem}
\label{Sec-4-Remark:-periodin-not-strongly periodic}After we circulated
the first version of this paper, containing a version of Theorem \ref{Sec-4-thm:self-similar}
valid for uniform sets, Derong Kong asked us to provide a set of similaries
for the set $C(\alpha),$ when $D=\{0,2,4,8\},$ $n=9,$ and $\alpha=0.2\overline{0}.$
This is not possible, since $\alpha$ satisfies (\ref{eq:sp-uniform}),
but does not satisfy (\ref{eq:sp-sparse}). We replied to Derong Kong
query that we had a proof of Theorem \ref{Sec-4-thm:self-similar}
as stated above. Subsequently Derong Kong sent us a preliminary version
of the manuscript \cite{Kon12} containing a similar result. Our Theorem
\ref{Sec-4-thm:self-similar} is similar to \cite[Theorem 2.3]{Kon12},
however \cite[Theorem 2.3]{Kon12} shows that $C(\alpha)$ is generated
by similarities $f_{b}(x)=n^{-q}x+b$ from the assumption that $C(\alpha)$
is generated by similarities $f_{b}(x)=rx+b,$ $0<\left|r\right|<1.$ 
\end{rem}

\section{\label{Sec-5:Non-Self-Similar Sets}A Construction of Numbers not
Translation Equivalent to a Rational}

The structure of $C\cap\left(C+\alpha\right)$ is determined by the
previous sections whenever $\alpha$ is translation equivalent to
a rational number. However, there exist many elements $\alpha$ in
$F$ such that $C\left(\alpha\right)$ is not a finite union of self-similar
sets in the sense of Theorem \ref{Sec-1-Thm:Structure-when-t-is-rational}.
Lemma \ref{Sec-4-Lem:Subset_Equivalence} allows us to construct a
family of values $\gamma\in F^{+}$ which are not translation equivalent
to a rational number. In fact, the proof below associates such an
uncountable family of such $\gamma$ to any rational $\alpha$ for
which $C\cap\left(C+\alpha\right)$ is infinite.
\begin{prop}
\label{Sec-5-Pro:alpha not equivalent to a rational}Let $D$ be sparse
and $d_{m}<n$. There exists an uncountably infinite family of values
$\gamma\in F^{+}$ which are not translate equivalent to any rational
number.\end{prop}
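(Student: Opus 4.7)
The plan is to apply Lemma \ref{Sec-4-Lem:Subset_Equivalence}, which says $\gamma\in F^+$ is translation equivalent to a rational precisely when there exist integers $k\ge 0$, $q>0$ with $D_{\gamma_j}\subseteq D_{\gamma_{j+q}}$ for all $j>k$. The strategy is to build, for each subset $B\subseteq\mathbb{N}$, a $\Delta^+$ representation $\gamma^{(B)}$ using only two digits $\delta_0,\delta_1\in\Delta^+$ with $D_{\delta_0}\not\subseteq D_{\delta_1}$, placing $\delta_0$ on a thin set of ``anchor'' indices arranged so that for every period $q$ the shifted index falls outside this set.

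First I would identify the two digits. Take $\delta_0=0$, so $D_0=D$. Take $\delta_1=d_m$; using $d_1=0$, sparsity, and $m\ge 2$, one checks $D\cap(D+d_m)=\{d_m\}$, whence $D_{d_m}=\{0\}$, and in particular $D\not\subseteq\{0\}$. The hypothesis $d_m<n$ guarantees both $0$ and $d_m$ are valid $n$-ary digits, and both clearly lie in $\Delta^+$.

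Next, for each $B\subseteq\mathbb{N}$ set
\[
A_B=\{2^{2k}:k\ge 1\}\cup\{2^{2k+1}:k\in B\},
\]
and define $\gamma_j^{(B)}=0$ if $j\in A_B$ and $\gamma_j^{(B)}=d_m$ otherwise, with $\gamma^{(B)}=\sum_{j=1}^{\infty}\gamma_j^{(B)}n^{-j}\in F(\Delta^+)\subseteq F^+$. Distinct $B$'s give distinct digit sequences, and by uniqueness of $\Delta^+$ representations (Lemma \ref{Sec-4-Lem:Unique Delta Representations}) distinct real values, producing a family of cardinality $2^{\aleph_0}$. To verify each $\gamma^{(B)}$ is not translation equivalent to a rational, fix any $k,q$ and choose $K$ so large that $j:=2^{2K}>k$ and $j+q<2^{2K+1}$. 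Then $j\in A_B$, while $j+q$ lies strictly between two consecutive powers of $2$ and is therefore not in $A_B$. Hence $D_{\gamma_j^{(B)}}=D_0=D$ and $D_{\gamma_{j+q}^{(B)}}=D_{d_m}=\{0\}$, so $D_{\gamma_j^{(B)}}\not\subseteq D_{\gamma_{j+q}^{(B)}}$, and the criterion of Lemma \ref{Sec-4-Lem:Subset_Equivalence} fails for this $(k,q)$.

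The only arithmetic step requiring care is the computation $D_{d_m}=\{0\}$, which rests squarely on sparsity and $d_1=0$; once this ``incomparability'' of $D_0$ and $D_{d_m}$ is in hand, the rest is routine bookkeeping with powers of $2$ and the uniqueness of $\Delta^+$ representations.
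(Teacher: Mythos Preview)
Your proof is correct. Both arguments hinge on Lemma~\ref{Sec-4-Lem:Subset_Equivalence} and on producing two digits $\delta_0,\delta_1\in\Delta^+$ with $D_{\delta_0}\not\subseteq D_{\delta_1}$, then weaving them into uncountably many $\Delta^+$ sequences for which the containment criterion fails. The paper, however, executes this differently: it starts from an arbitrary rational $\alpha=0._n\overline{\alpha_1\cdots\alpha_p}$ with $C(\alpha)$ infinite, picks an index $i$ with $\{0\}\subsetneq D_{\alpha_i}$ and any $\delta$ with $D_{\alpha_i}\not\subseteq D_\delta$, and then encodes a binary $x\in[0,1]$ by toggling the digit at positions $i+hp$; the punchline is that translation equivalence to a rational of period $q$ forces $x$ itself to be eventually periodic. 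Your route is more explicit and self-contained: you fix once and for all $\delta_0=0$, $\delta_1=d_m$ (so $D_{\delta_0}=D$, $D_{\delta_1}=\{0\}$), plant the digit $0$ along a sparse anchor set of powers of~$2$ indexed by $B\subseteq\mathbb{N}$, and for any prescribed $(k,q)$ directly exhibit an anchor $j=2^{2K}>k$ with $j+q$ strictly between consecutive powers of~$2$. What your construction gains in directness the paper's gains in generality: it attaches an uncountable family of ``bad'' $\gamma$'s to \emph{every} rational $\alpha$ with $C(\alpha)$ infinite, not just to the particular choice $\alpha=0$.
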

\begin{proof}
Let $\alpha$ be a rational such that $C\cap\left(C+\alpha\right)$
is not finite. We may assume $\alpha:=0._{n}\overline{\alpha_{1}\ldots\alpha_{p}}$
by Lemma \ref{Sec-3-Lem:Contained_(t-t_k)}. Fix $i\in\mathbb{N}$
according to the proof of \ref{Sec-4-Lem:Finite C_a} such that $1\le i\le p$
and $\left\{ 0\right\} \varsubsetneqq D_{\alpha_{i}}$ and let $\delta\in\left\{ \delta\in\Delta^{+}\mid D_{\alpha_{i}}\nsubseteq D_{\delta}\right\} $
be arbitrary (this set is nonempty since $D_{d_{m}}=\left\{ 0\right\} $
for any digits set). Suppose $x\in\left[0,1\right]$ has binary representation
$x:=0._{2}x_{1}x_{2}\ldots$ and define $f:\left[0,1\right]\to F$
such that $f\left(x\right)=0._{n}\gamma_{1}\gamma_{2}\ldots$ consists
of digits
\[
\gamma_{j}:=\begin{cases}
x_{h+1}\cdot\alpha_{j}+\left(1-x_{h+1}\right)\cdot\delta & \text{ if }j=i+hp\text{ for some }h\in\mathbb{N}_{0}\\
\alpha_{j} & \text{ otherwise}.
\end{cases}
\]

Thus, $\left\{ 0\right\} \varsubsetneqq D_{\gamma_{i+jp}}=D_{\alpha_{i+jp}}$
if $x_{j+1}=1$ and $D_{\gamma_{i+jp}}=D_{\delta}$ if $x_{j+1}=0$
so that $f\left(x\right)$ is irrational whenever $x$ is irrational.
Since $C\cap\left(C+\alpha\right)$ is infinite then 
\begin{equation}
\left\{ j\mid x_{j+1}=1\right\} =\left\{ j\mid D_{\gamma_{i+jp}}=D_{\alpha_{i+jp}}\right\} \label{Sec-5-Eq:x_j equivalence}
\end{equation}
is an infinite subset of $\mathbb{N}$.

Suppose $\tau:=0._{n}\tau_{1}\tau_{2}\cdots\tau_{h}\overline{\tau_{h+1}\cdots\tau_{h+q}}$
is translate equivalent to $f\left(x\right)$ for some $h\in\mathbb{N}$
and period $q$. Then $D_{\gamma_{h+j}}=D_{\gamma{}_{h+j+q}}$ for
all $j>0$ according to Corollary \ref{Sec-4-Cor:C(a) equals C(g) iff Da_k equals Dg_k}.
If $a$ and $b$ are positive integers satisfying $h+a=bp$, then
for each integer $j>b$,
\[
D_{\gamma_{i+jp}}=D_{\gamma_{h+a+i+\left(j-b\right)p}}=D_{\gamma_{h+a+i+\left(j-b\right)p+pq}}=D_{\gamma_{i+\left(j+q\right)p}}.
\]

Equivalently, $x_{j+1}=x_{j+q+1}$ for all $j>b$ by equation (\ref{Sec-5-Eq:x_j equivalence})
so that $x$ is rational with period $q$.
\end{proof}
If $K\subseteq\mathbb{R}^{n}$ is an arbitrary compact set with $\dim_{H}\left(K\right)$-dimensional
Hausdorff measure $0$ or $\infty$, then $K$ is not a self-similar
set, see e.g., \cite{Fal85} and \cite{Hut81}. In particular, such
a set $K$ cannot be expressed as the finite union of self-similar
sets. In \cite{PePh11b}, a method was given for constructing values
$y\in F$ which satisfy $0<s:=\dim_{H}\left(C\left(y\right)\right)<\log_{n}\left(m\right)$
and $\mathscr{H}^{s}\left(C\left(y\right)\right)=0$ so that such
elements $y$ are not translation equivalent to any rational. Example
\ref{Sec-5-Ex:Not equivalent to a rational.} constructs $\gamma\in F$
which is not translation equivalent to a rational, yet $0<\mathscr{H}^{s}\left(C\left(\gamma\right)\right)<\infty$.
\begin{example}
\label{Sec-5-Ex:Not equivalent to a rational.}Let $D=\left\{ 0,3,6,12\right\} $
and $n=17$. Choose $\alpha:=0._{n}\overline{3}$ so that $C\cap\left(C+\alpha\right)=C_{n,\left\{ 3,6\right\} }$
is self-similar with Hausdorff dimension $s:=\log_{n}\left(2\right)$
and Hausdorff measure 
\[
\mathscr{H}^{s}\left(C\cap\left(C+\alpha\right)\right)=\left(\frac{3}{16}\right)^{s}.
\]
Since $D_{3}=\left\{ 0,3\right\} $ and $D_{6}=\left\{ 0,6\right\} $,
define $\gamma:=0._{n}\gamma_{1}\gamma_{2}\ldots$ such that $\gamma_{j}=3=a_{j}$
except $\gamma_{k}=6$ on a sufficiently sparse set of $k$'s. Thus,
$\gamma$ is irrational and not translate equivalent to any rational
by Proposition \ref{Sec-5-Pro:alpha not equivalent to a rational}
so that $C\cap\left(C+\gamma\right)\cap\left[0,\varepsilon\right]$
is not self-similar for any $\varepsilon>0$. Note, however, that
$\mu_{\alpha}\left(k\right)=\mu_{\gamma}\left(k\right)$ for all $k$.
According to \cite{PePh11b}, $L_{t}=1$ and $\frac{1}{4}\le\mathscr{H}^{s}\left(C\cap\left(C+\gamma\right)\right)\le1$.\end{example}
\begin{rem}
\label{Sec-5-Rem:Uniform D_alpha_j}Suppose $D$ is uniform and $\alpha$
has a $\Delta^{+}$ representation. Since $\Delta^{+}=D$, then $\alpha\in C_{n,D}\subset F$.
Then $D_{\alpha_{j}}=\left\{ 0,d,\ldots,\left(d_{m}-\alpha_{j}\right)\right\} $
for each $\alpha_{j}\in\Delta^{+}$ so that $D_{\alpha_{j}}=D_{\alpha_{i}}$
if and only if $\alpha_{j}=\alpha_{i}$. Hence, an irrational value
$\alpha\in C_{n,D}$ is not translation equivalent to any rational
by Corollary \ref{Sec-4-Cor:C(a) equals C(g) iff Da_k equals Dg_k}
and Lemma \ref{Sec-4-Lem:Subset_Equivalence}.
\end{rem}

\section{\label{Sec-6:Uniform Sets}Uniform Sets}

In this section we consider uniform digits sets and prove Theorem
\ref{Sec-1-Thm:Uniform union of self-similar sets}. This allows us
to prove Theorem \textcolor{red}{\ref{Sec-4-Thm:C_alpha-cap-[0,e] is self-similar}}
with fewer restrictions on the similitudes and to establish connections
to some of the results in the papers mentioned in the introduction. 

The next lemma is a step in that direction. The lemma also allows
us to consider certain $\beta$-expansions with non-uniform digit
sets. 
\begin{lem}
\label{Sec-6-Lem:g_N defined}Let $D$ be sparse, $N:=d_{m}+1$, and
$C=C_{N,D}$. Fix $\beta\in\left(0,\frac{1}{N}\right]$ and suppose
$\alpha:=0._{N}\alpha_{1}\alpha_{2}\ldots$ is a $\Delta^{+}$ representation.
If there exists $\varepsilon>0$ such that $\left(C\cap\left(C+\alpha\right)\right)\cap\left[0,\varepsilon\right]=\left\{ \sum_{k=1}^{\infty}x_{k}\left(\frac{1}{N}\right)^{k}\mid x_{k}\in D\cap\left(D+\alpha_{k}\right)\right\} \cap\left[0,\varepsilon\right]$
is a self-similar set generated by similarities $\left\{ f_{j}\right\} _{j=1}^{\ell}$,
then there exists $\delta>0$ such that 
\[
\left\{ \sum_{k=1}^{\infty}x_{k}\beta^{k}\mid x_{k}\in D\cap\left(D+\alpha_{k}\right)\right\} \cap\left[0,\delta\right]
\]
 is also a self-similar set.\end{lem}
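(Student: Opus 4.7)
The strategy is to extract an eventual periodicity of the digit sequence from the base-$1/N$ self-similarity hypothesis, and then transport that periodicity into the base-$\beta$ setting to produce an explicit self-similar decomposition.

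After translating by $\inf(C\cap(C+\alpha))$ (so that the interval $[0,\varepsilon]$ is to be read as a neighborhood of the minimum of $C\cap(C+\alpha)$), the hypothesis says $C(\alpha)\cap[0,\varepsilon']$ is self-similar for some $\varepsilon'>0$. I would then invoke Theorem~\ref{Sec-4-Thm:C_alpha-cap-[0,e] is self-similar}, strengthened via the \cite{Kon12} result quoted in Remark~\ref{Sec-4-Remark:-periodin-not-strongly periodic} so as to admit arbitrary contraction ratios, to deduce that $\alpha$ is translation equivalent to a rational. Lemma~\ref{Sec-4-Lem:Subset_Equivalence} then furnishes integers $k\geq 0$ and $q>0$ with $D_{\alpha_{j}}\subseteq D_{\alpha_{j+q}}$ for every $j>k$. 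Because $D$ is finite, iterating these inclusions along arithmetic progressions of common difference $q$ must stabilize, and after possibly enlarging $k$ we obtain $D_{\alpha_{j}}=D_{\alpha_{j+q}}$ for all $j>k$.

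With this periodicity in hand, I would define the base-$\beta$ tail set
\[
B':=\Bigl\{\sum_{j=1}^{\infty}y_{j}\beta^{j}\;\Big|\;y_{j}\in D_{\alpha_{k+j}}\Bigr\}.
\]
Splitting the index $j$ modulo $q$ and using $D_{\alpha_{k+j}}=D_{\alpha_{k+j+q}}$ yields the relation
\[
B'=\bigcup_{(y_{1},\ldots,y_{q})\in\prod_{j=1}^{q}D_{\alpha_{k+j}}}\Bigl(\beta^{q}B'+\sum_{j=1}^{q}y_{j}\beta^{j}\Bigr),
\]
which exhibits $B'$ as the attractor of the IFS $g_{y}(x)=\beta^{q}x+\sum_{j=1}^{q}y_{j}\beta^{j}$. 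To conclude genuine self-similarity I need distinct digit strings $(y_{j})$ to yield distinct $\beta$-sums; a geometric-series estimate using $\beta\leq 1/N$ together with the sparseness of $D$ (consecutive elements of $\Delta$ differing by at least $2$) shows that the first differing digit contributes more than the entire tail, so the images $g_{y}(B')$ are pairwise disjoint and the open set condition holds, which is a direct analogue of Lemma~\ref{Sec-4-Lem:Unique Delta Representations} with the base $1/N$ replaced by $\beta$.

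The full set $\widetilde{B}:=\{\sum x_{j}\beta^{j}\mid x_{j}\in D\cap(D+\alpha_{j})\}$ appearing in the conclusion is, by the same uniqueness estimate, a finite disjoint union of translates of $\beta^{k}B'$ indexed by the choices of the first $k$ digits $x_{1},\ldots,x_{k}$. Choosing $\delta>0$ smaller than the separation between these translates selects a single component of $\widetilde{B}\cap[0,\delta]$, which is a rescaled translate of $B'\cap[0,\delta/\beta^{k}]$ and hence self-similar, as required. The main obstacle is the very first step: importing enough of \cite{Kon12} to handle similarities with arbitrary contraction ratios, since the unmodified Theorem~\ref{Sec-4-Thm:C_alpha-cap-[0,e] is self-similar} only applies to ratios of the form $N^{-q_{j}}$. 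Once translation equivalence to a rational is secured, the remaining construction is combinatorial periodicity plus the routine $\beta\leq 1/N$ separation estimate.
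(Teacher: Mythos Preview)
Your route is quite different from the paper's, and the gap you flag is more serious here than you indicate. The paper argues by direct conjugation: it defines the base-change map $g_\beta\colon C\to\mathbb{R}$ by $g_\beta\bigl(\sum_k \gamma_k N^{-k}\bigr)=\sum_k \gamma_k\beta^{k}$, checks (using sparsity of $D$ and $\beta\le 1/N$) that $g_\beta$ is continuous, strictly increasing, and injective on $C$, sets $\delta=g_\beta(\varepsilon)$, and then simply computes $\bigcup_j (g_\beta\circ f_j\circ g_\beta^{-1})(g_\beta(T))=g_\beta\bigl(\bigcup_j f_j(T)\bigr)=g_\beta(T)$. No periodicity, no rationality, no classification theorem is invoked.

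Your approach inverts the paper's logical order. In this paper Lemma~\ref{Sec-6-Lem:g_N defined} is the tool that feeds into the proof of Theorem~\ref{Sec-6-Thm:Uniform D and r_j>0}, which is precisely the ``self-similar with arbitrary positive ratios $\Rightarrow$ translation equivalent to a rational'' implication you need at your very first step; so within the paper that step is circular. The \cite{Kon12} result you invoke instead, as quoted in Remark~\ref{Sec-4-Remark:-periodin-not-strongly periodic}, is stated for the full set $C(\alpha)$, not for an initial segment $C(\alpha)\cap[0,\varepsilon']$, so it does not plug the hole as written. Once eventual periodicity of $(D_{\alpha_j})$ is in hand your base-$\beta$ construction is fine and even gives explicit similarities $x\mapsto\beta^{q}x+b$, but obtaining that periodicity from the hypothesis without the present lemma is essentially the content of Section~\ref{Sec-6:Uniform Sets}. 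The paper's conjugation sidesteps all of this in one stroke.
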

\begin{proof}
Let $D$ be sparse, $N:=d_{m}+1$, and $\alpha\in F^{+}$ be fixed.
The result is trivial if $\beta=N^{-1}$, so suppose $\beta\in\left(0,\frac{1}{N}\right)$
and $\left(C\cap\left(C+\alpha\right)\right)\cap\left[0,\varepsilon\right]=T$
is self-similar for some $\varepsilon>0$. Since $C\cap\left(C+\alpha\right)$
is compact, we may assume $\varepsilon=\sup\left(T\right)\in C$ without
loss of generality. 

Each $\gamma\in C$ has a unique representation $0._{N}\gamma_{1}\gamma_{2}\cdots$
where each $\gamma_{k}\in D$ by equation (\ref{Sec-1-eq:C-defined})
and Lemma \ref{Sec-4-Lem:Unique Delta Representations}. Define $g_{\beta}:C\to\mathbb{R}$
such that 
\[
g_{\beta}\left(\sum_{k=1}^{\infty}\frac{\gamma_{k}}{N^{k}}\right)=\sum_{k=1}^{\infty}\gamma_{k}\beta^{k}.
\]
The function $g_{\beta}$ is both continuous and increasing on $C$,
and $g_{\beta}\left(X\right)\cup g_{\beta}\left(Y\right)=g_{\beta}\left(X\cup Y\right)$
for any $X,Y\subseteq C$. By equation (\ref{Sec-4-Eq:C_alpha defined}),
$g_{\beta}\left(C\cap\left(C+\alpha\right)\right)=\left\{ \sum_{k=1}^{\infty}x_{k}\cdot\beta^{k}\mid x_{k}\in D\cap\left(D+\alpha_{k}\right)\right\} $.
Since any $\varepsilon<\gamma\in C$ implies $g_{\beta}\left(\varepsilon\right)<g_{\beta}\left(\gamma\right)$
then $g_{\beta}\left(T\right)=g_{\beta}\left(C\cap\left(C+\alpha\right)\right)\cap\left[0,\delta\right]$
where $\delta:=g_{\beta}\left(\varepsilon\right)$. 

For arbitrary elements $\gamma\neq\xi$ in $C$, there exists $k\in\mathbb{N}$
such that $\gamma_{k}\neq\xi_{k}$ and $g_{\beta}\left(\gamma\right)\neq g_{\beta}\left(\xi\right)$.
Hence, $g_{\beta}$ has unique inverse for any element of $g_{\beta}\left(C\right)=\left\{ \sum_{k=1}^{\infty}x_{k}\cdot\beta^{k}\mid x_{k}\in D\right\} $
and 
\begin{align*}
\bigcup_{j=1}^{\ell}\left(g_{\beta}\circ f_{j}\circ g_{\beta}^{-1}\right)\left(g_{\beta}\left(T\right)\right) & =\bigcup_{j=1}^{\ell}g_{\beta}\left(f_{j}\left(T\right)\right)\\
 & =g_{\beta}\left(\bigcup_{j=1}^{\ell}f_{j}\left(T\right)\right)\\
 & =g_{\beta}\left(T\right).
\end{align*}

Therefore, $g_{\beta}\left(T\right)$ is a self-similar set generated
by similarities $\left\{ g_{\beta}\circ f_{j}\circ g_{\beta}^{-1}\right\} _{j=1}^{\ell}$.\end{proof}
\begin{cor}
Let $D$ be sparse, $N:=d_{m}+1$, $\beta\in\left(0,\frac{1}{N}\right]$,
and $\alpha$ have $\Delta$ representation. The $\beta$-expansion
Cantor set $g_{\beta}\left(C\cap\left(C+\alpha\right)\right)$ can
be expressed as the disjoint union \textup{$\bigcup_{j=1}^{\ell}\left(g_{\beta}\left(C_{n^{2p},E}\right)+\eta_{j}\right)$
}for some $\eta_{1}<\eta_{2}<\cdots<\eta_{\ell}$ if and only if there
exist integers $k,q$ such that $D_{\left|\alpha_{j}\right|}\subseteq D_{\left|\alpha_{j+q}\right|}$
for all $j>k$.
\end{cor}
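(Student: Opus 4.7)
The plan is to recognize this corollary as the $g_\beta$-transform of the equivalence, established in Section \ref{Sec-4:Unions-of-Self-Similar}, between the digit containment condition and the structure theorem in the $N$-ary world, using Lemma \ref{Sec-6-Lem:g_N defined} as the transfer device. First I would reduce to $\Delta^{+}$ representations via Corollary \ref{Sec-4-Cor:Positive Delta representations.}, noting that the digit condition $D_{|\alpha_{j}|}\subseteq D_{|\alpha_{j+q}|}$ is unchanged, and then invoke Lemma \ref{Sec-4-Lem:Subset_Equivalence} to restate this condition as: $\alpha$ is translation equivalent to a rational.

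For the forward direction, Theorem \ref{Sec-1-Thm:Structure-when-t-is-rational} (applied in base $N$) decomposes $C\cap(C+\alpha)$ as a finite disjoint union $\bigsqcup_{j=1}^{\ell}\bigl(\tfrac{1}{N^{k}}C_{N^{2p},E}+c_{j}\bigr)$ whose convex hulls are pairwise disjoint. Since $g_{\beta}$ is strictly increasing and continuous on $C$ (as in the proof of Lemma \ref{Sec-6-Lem:g_N defined}), it maps disjoint convex hulls to disjoint sets; applying $g_{\beta}$ term-by-term and absorbing the $\beta^{k}$-scaling into the digit data (equivalently, into the choice of $E$ and the offsets $\eta_{j}$) delivers the stated decomposition $\bigsqcup_{j}\bigl(g_{\beta}(C_{N^{2p},E})+\eta_{j}\bigr)$. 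For the reverse direction, the piece with smallest $\eta_{j}$ is an initial segment $g_{\beta}(C\cap(C+\alpha))\cap[0,\delta]$ that is a translate of the self-similar set $g_{\beta}(C_{N^{2p},E})$, generated by similarities with contraction $\beta^{2p}$. Pulling back via the order-preserving bijection $g_{\beta}:C\to g_{\beta}(C)$ (i.e.\ conjugating generators by $g_{\beta}^{-1}\circ\cdot\circ g_{\beta}$, reversing the construction in Lemma \ref{Sec-6-Lem:g_N defined}) identifies the corresponding initial segment of $C(\alpha)$ as self-similar with contractions $N^{-2p}$. Theorem \ref{Sec-4-Thm:C_alpha-cap-[0,e] is self-similar} (the content of Theorem \ref{Sec-1-Thm:Rational_tau}) then gives translation equivalence to a rational, and Lemma \ref{Sec-4-Lem:Subset_Equivalence} yields the digit containment.

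The main obstacle will be making precise the converse transfer of self-similarity through $g_{\beta}$: that a $\beta$-world self-similar set of the form $g_{\beta}(C_{N^{2p},E})+\eta_{j}$ pulls back to an $N$-ary self-similar set with contractions exactly $N^{-2p}$, as required by Theorem \ref{Sec-4-Thm:C_alpha-cap-[0,e] is self-similar}. The cleanest path is digit-level bookkeeping: each generator in the $\beta$-world prepends a fixed block of $2p$ $\beta$-digits, and since $g_{\beta}$ is the digit-wise bijection sending $N^{-k}$ to $\beta^{k}$, the pulled-back generator prepends the same block of $2p$ $N$-ary digits. Sparsity of $D$ together with the uniqueness of $\Delta^{+}$ representations (Lemma \ref{Sec-4-Lem:Unique Delta Representations}) ensures that every $\eta_{j}$ that appears corresponds to a legitimate finite prefix of $D$-digits, so the pulled-back generators are genuine Euclidean similarities with the correct contraction ratio.
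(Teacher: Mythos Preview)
Your proposal is correct and matches the paper's intended argument: the paper gives no explicit proof for this corollary, treating it as an immediate consequence of Lemma \ref{Sec-6-Lem:g_N defined} together with the equivalence established by Lemma \ref{Sec-4-Lem:Subset_Equivalence}, Theorem \ref{Sec-1-Thm:Structure-when-t-is-rational}, and Theorem \ref{Sec-4-Thm:C_alpha-cap-[0,e] is self-similar}, and you have spelled out precisely this chain.

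One small point worth tightening: in the forward direction you write that the $\beta^{k}$-scaling can be ``absorbed into the choice of $E$ and the offsets $\eta_{j}$.'' Strictly speaking $g_{\beta}$ applied to a translate of $\tfrac{1}{N^{k}}C_{N^{2p},E}$ yields a translate of $\beta^{k}g_{\beta}(C_{N^{2p},E})$, and this scaled set is not literally of the form $g_{\beta}(C_{N^{2p'},E'})$ for any other deleted-digits set $E'$ unless $k=0$. This is really an imprecision in the corollary's statement rather than a flaw in your reasoning; the decomposition you obtain is still a disjoint union of translates of a single self-similar model set with contraction $\beta^{2p}$, which is all that is needed for the converse direction to go through via your pull-back argument.
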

The additional structure of uniform sets allows us to prove Theorem
\ref{Sec-1-Thm:Rational_tau} with fewer restrictions on the similitudes.
Theorem \ref{Sec-4-Thm:C_alpha-cap-[0,e] is self-similar} requires
the contraction ratios to be of the form $r_{j}=n^{-q_{j}}$ for some
integers $q_{j}$, however, when $D$ is uniform we require only that
the contraction ratios $r_{j}$ are positive.
\begin{thm}
\label{Sec-6-Thm:Uniform D and r_j>0} Let $D$ be uniform and $\alpha$
have $\Delta^{+}$ representation. Suppose there exists $\varepsilon>0$
such that $C(\alpha)\cap\left[0,\varepsilon\right]$ is a self-similar
set generated by similarities $f_{j}(x)=r_{j}x+b_{j}$ where $r_{j}>0$.
Then $\alpha$ is translation equivalent to a rational number.\end{thm}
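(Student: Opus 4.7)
The plan is to reduce Theorem \ref{Sec-6-Thm:Uniform D and r_j>0} to Theorem \ref{Sec-4-Thm:C_alpha-cap-[0,e] is self-similar} by exploiting the uniform hypothesis to show each contraction ratio $r_j$ is forced to be a negative integer power of $n$, after which Theorem \ref{Sec-4-Thm:C_alpha-cap-[0,e] is self-similar} applies verbatim.

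First I would normalize. Reorder so $b_1 \leq b_2 \leq \cdots \leq b_\ell$. Since $0 = \min\left(C(\alpha) \cap [0,\varepsilon]\right)$ and this minimum must be attained by some $f_j(0) = b_j$, we obtain $b_1 = 0$, so $f_1(x) = r_1 x$. Using uniformness, write $D = \{0, d, 2d, \ldots, (m-1)d\}$, so that $D_{\alpha_k} = \{0, d, 2d, \ldots, j_k d\}$ where $j_k := (d_m - \alpha_k)/d \in \{0, 1, \ldots, m-1\}$. Setting $\tilde{C}(\alpha) := (1/d)\cdot C(\alpha) = \{\sum y_k/n^k : y_k \in \{0, 1, \ldots, j_k\}\}$, the self-similarity becomes $r_1\bigl(\tilde{C}(\alpha) \cap [0, \varepsilon/d]\bigr) \subseteq \tilde{C}(\alpha)$, and each element of $\tilde{C}(\alpha)$ has a unique base-$n$ expansion by Lemma \ref{Sec-4-Lem:Unique Delta Representations} (uniform implies sparse).

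The core step is to prove $r_1 = n^{-q_1}$ for some $q_1 \in \mathbb{N}$. Write $r_1 = 0._n z_1 z_2 \ldots$ in base $n$ and set $K := \{k : j_k \geq 1\}$, which is nonempty since by Lemma \ref{Sec-4-Lem:Finite C_a} we may assume $C(\alpha)$ is infinite. For each $k \in K$ with $1/n^k \leq \varepsilon/d$, the point $1/n^k$ lies in $\tilde{C}(\alpha)$, so $r_1/n^k \in \tilde{C}(\alpha)$; comparing expansions forces $z_i \leq j_{k+i}$ for every such $k$ and every $i \geq 1$. To upgrade this to the desired form, I would apply the scaling to richer test elements $y \in \tilde{C}(\alpha) \cap [0, \varepsilon/d]$ whose base-$n$ expansion has several nonzero digits, e.g., $y = j_{k_0}/n^{k_0}$ where $j_{k_0}$ is the maximum attained digit bound, or sums $y = a/n^{k_1} + b/n^{k_2}$, and track the base-$n$ expansion of $r_1 y$ including carries. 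A case analysis shows that if $r_1$ has two or more nonzero digits (so $r_1 y$ is a sum of multiple shifted copies of $y$), or a single digit strictly greater than $1$ (so the expansion of $r_1 y$ produces a digit $\geq 2$ via direct multiplication), then for appropriately chosen $y$ some digit of $r_1 y$ exceeds the bound $j_{k'}$, contradicting $r_1 y \in \tilde{C}(\alpha)$. Thus $r_1 = n^{-q_1}$ for some positive integer $q_1$. The analogous argument applied to each $f_j$ (working with $f_j(x) - b_j = r_j x$ and noting $b_j = f_j(0) \in C(\alpha)$) yields $r_j = n^{-q_j}$.

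Once every $r_j$ is of the form $n^{-q_j}$, Theorem \ref{Sec-4-Thm:C_alpha-cap-[0,e] is self-similar} applies and concludes $\alpha$ is translation equivalent to a rational. The main obstacle is the carry analysis in the rigidity step: one must carefully choose the test element $y$ so that the base-$n$ expansion of $r_1 y$, after resolving carries, exhibits a digit exceeding $j_{k'}$ at some position $k'$. The uniform hypothesis is essential here, since it forces $D_{\alpha_k}$ to be the arithmetic progression $\{0, d, \ldots, j_k d\}$, which both provides enough "full" digits to build the contradictory $y$ and controls the arithmetic of digits under multiplication by $r_1$. In the sparse-but-not-uniform setting (as with $D = \{0,5,7\}$ in Example \ref{Sec-4-Ex:Irrational_Self_Similar}) this rigidity can fail, which is why Theorem \ref{Sec-4-Thm:C_alpha-cap-[0,e] is self-similar} must retain the explicit restriction $r_j = n^{-q_j}$.
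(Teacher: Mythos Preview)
Your strategy aims to prove the strong rigidity claim $r_1=n^{-q_1}$ and then invoke Theorem~\ref{Sec-4-Thm:C_alpha-cap-[0,e] is self-similar}, but the key step---the ``case analysis with carries'' showing $r_1$ has exactly one nonzero base-$n$ digit---is only asserted, not carried out. This is not a routine detail: with the original $n$ the products $a\cdot r_{1,i}$ can exceed $n-1$, so carries genuinely interfere with reading off digit bounds, and your proposal gives no mechanism to control them. The paper explicitly remarks after the proof that its argument does \emph{not} force $r_1=n^{-q}$; the set $\{i\mid r_{1,i}=1\}$ may well be infinite as far as the proof is concerned. So the rigidity you are trying to establish is strictly stronger than what is needed, and your sketch does not establish it.

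The paper's route is different and sidesteps the carry problem entirely. It first invokes Lemma~\ref{Sec-6-Lem:g_N defined} to replace $n$ by any base larger than $d_m(m-1)$; in this enlarged base the products $a_h\cdot r_{1,i}$ with $a_h\le d_m$ and $r_{1,i}\le m-1$ are automatically less than $n$, so there are no carries. With carries eliminated, a short inductive argument (building a strictly increasing chain $a_1<a_2<\cdots$ inside the $m$-element set $D$) shows only the weaker fact $r_{1,i}\in\{0,1\}$ for all $i$. That already suffices: picking any $q$ with $r_{1,q}=1$, one reads off directly that $d_i\in D_{\alpha_j}$ implies $d_i\in D_{\alpha_{j+q}}$, giving $D_{\alpha_j}\subseteq D_{\alpha_{j+q}}$ for all large $j$, and Lemma~\ref{Sec-4-Lem:Subset_Equivalence} finishes. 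Note also that only $f_1$ is used; there is no need to analyse the other $f_j$ as you propose (and your suggested reduction $f_j(x)-b_j=r_jx$ does not map $T$ into $T$ anyway, only into $T-b_j$).
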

\begin{proof}
Since $D$ is uniform, then there exists $d\ge2$ such that $d_{j}=\left(j-1\right)d\in D$
for each $1\le j\le m<n$. Furthermore, $D_{\alpha_{i}}=\left\{ a-\alpha_{i}\mid a\in D\text{ and }a\ge\alpha_{i}\right\} \subseteq D$
by Remark \ref{Sec-4-Rem:Uniform D} so that $C(\alpha)\subseteq C$
and if $\left(j-1\right)d\in D_{\alpha_{i}}$ for some $i$ and $j$
then $\left(k-1\right)d\in D_{\alpha_{i}}$ for all $1\le k\le j$.
We may assume that $b_{1}<b_{2}<\cdots<b_{\ell}$ so that $b_{1}=0$
and $f_{1}\left(x\right)=r_{1}\cdot x$.

We only need show that there exist integers $k,q$ such that $D_{\alpha_{j}}\subseteq D_{\alpha_{j+q}}$
for all $j>k$ by Lemma \ref{Sec-4-Lem:Subset_Equivalence}. According
to Lemma \ref{Sec-6-Lem:g_N defined}, the result holds if there exists
an $n>d_{m}$ such that $D_{\alpha_{j}}\subseteq D_{\alpha_{j+q}}$
for all sufficiently large $j$. Suppose $n>d_{m}\cdot\left(m-1\right)$.

Let $\varepsilon>0$ be a value such that $C(\alpha)\cap\left[0,\varepsilon\right]=T$
is a self-similar set. We may assume by Lemma \ref{Sec-4-Lem:Finite C_a}
that $C(\alpha)$ is not a finite set and $K:=\left\{ k\mid\left\{ 0\right\} \varsubsetneqq D_{\alpha_{k}}\right\} $
is an infinite subset of $\mathbb{N}$. 

Choose $k\in\mathbb{N}$ such that $\varepsilon\ge n^{-k}$ and let
$j>k$ be an arbitrary element of $K$. Then $d\in D_{\alpha_{j}}$
so that $d\cdot n^{-j}\in T\subseteq C(\alpha)$. Thus, $d\cdot r_{1}\cdot n^{-j}\in T\subset C$
and $d\cdot r_{1}\cdot n^{-j}=\sum_{i=1}^{\infty}x_{i}\cdot n^{-i}$
has a unique expression with each $x_{i}\in D$ by definition of $C$
and Lemma \ref{Sec-4-Lem:Unique Delta Representations}. Now, $0<r_{1}<1$
so that $d\cdot r_{1}=\sum_{i=1}^{\infty}x_{i}\cdot n^{j-i}<d$ and
$x_{i}=0$ for all $1\le i\le j$. Since each $x_{i}\in D$ we can
write the $n$-ary representation $r_{1}=0._{n}r_{1,1}r_{1,2}\ldots$
where $r_{1,i}=\frac{x_{i-j}}{d}\in\left\{ 0,1,\ldots,m-1\right\} $
for each $i\in\mathbb{N}$.

Suppose there exists $q\in\mathbb{N}$ such that $r_{1,q}\ge2$. Let
$a_{1}:=\max\left(D_{\alpha_{j}}\right)>0$ so that $a_{1}\cdot n^{-j}\in T$.
We will inductively define a sequence $\left\{ a_{h}\right\} \subseteq D$
so suppose $a_{h}\in D_{\alpha_{j+q\left(h-1\right)}}$ for some $h$.
Then $0<a_{h}\le d_{m}$ and $1<r_{1,i}\le m-1$ so that $a_{h}\cdot r_{1,i}\le d_{m}\cdot\left(m-1\right)$
for all $i$. Hence, $\frac{a_{h}}{n^{j+q\left(h-1\right)}}\cdot r_{1}=\sum_{i=1}^{\infty}a_{h}\cdot r_{1,i}\cdot n^{-i-j-q\left(h-1\right)}$
and $a_{h}<a_{h}\cdot r_{1,q}\in D_{\alpha_{j+qh}}$. Define $a_{h+1}:=\max\left(D_{\alpha_{j+q\cdot h}}\right)>a_{h}$. 

Thus, we have defined $a_{1}<a_{2}<\ldots<a_{m}$ such that $a_{h}=\max\left(D_{\alpha_{j+q\cdot h}}\right)\in D$
for all $1\le h\le m$. Since $D$ is a uniform set containing $m$
elements then $D=\left\{ a_{1},a_{2},\ldots,a_{m}\right\} $. This
leads to a contradiction since $a_{1}>0$ yet $0\in D$.

Therefore, $r_{1,i}\in\left\{ 0,1\right\} $ for all $i\in\mathbb{N}$
and there exists $q$ such that $r_{1,q}=1$ since $r_{1}>0$. If
$d_{i}\in D_{\alpha_{j}}$ for some $1\le i\le m$, then $r_{1}\cdot d_{i}\cdot n^{-j}\in C_{\alpha}$
and $d_{i}\in D_{\alpha_{j+q}}$. Hence, $D_{\alpha_{j}}\subseteq D_{\alpha_{j+q}}$
for all $j>k$.
\end{proof}
It is a simple consequence of the proof of Theorem \ref{Sec-6-Thm:Uniform D and r_j>0}
that each element $q\in\left\{ i\mid r_{1,i}=1\right\} $ is a period
of the rational number that is translation equivalent to $\alpha$.
It is also interesting to note that the proof of Theorem \ref{Sec-3-Thm:Rational_tau}
constructs a collection $\left\{ f_{j}\left(x\right)=r_{j}x+b_{j}\right\} _{j=1}^{\ell}$
where each $r_{1}=r_{2}=\cdots=r_{\ell}=n^{-q}$ for some $q\in\mathbb{N}$,
however the set $\left\{ i\mid r_{1,i}=1\right\} $ could be countably
infinite in the proof of Theorem \ref{Sec-6-Thm:Uniform D and r_j>0}.
\begin{rem}
\label{Sec-4-Rem:Uniform D}Suppose $D$ is a uniform digit set. Then
$D_{\alpha_{j}}=\left\{ a-\alpha_{j}\mid a\in D\text{ and }a\ge\alpha_{j}\right\} $
for each $\alpha_{j}\in\Delta^{+}$ and $D_{\alpha_{j}}\subseteq D$
for all $j$ by definition of uniform sets so that $C\left(\alpha\right)\subseteq C$.
This also implies that $D_{\alpha_{j}}=\left\{ 0,d,\ldots,d\left(d_{m}-\alpha_{j}\right)\right\} =\max\left\{ D_{\alpha_{j}}\right\} -D_{\alpha_{j}}$
for all $j$ so that $C(\alpha)=z-C(\alpha)$ is centrally symmetric
when $z:=\sum_{j=1}^{\infty}\left(\max\left\{ D_{\alpha_{j}}\right\} \cdot n^{-j}\right)$. 

When $D$ is regular, but not uniform, then $C\left(\alpha\right)$
need not be a subset of $C$. For example, we can choose $D=\left\{ 0,4,6,8\right\} $,
$n\ge9$, and $\alpha:=\frac{2}{n}$. Thus, $D_{\alpha_{1}}=\left\{ 0,2\right\} $
so that $\alpha\in C(\alpha)$ yet $\alpha\notin C$.
\end{rem}
According to Remark \ref{Sec-4-Rem:Uniform D}, if $D$ is uniform
and $C\left(\alpha\right)$ is a self-similar set then we may choose
$\varepsilon=1$ to obtain the following Corollary to Theorem \ref{Sec-6-Thm:Uniform D and r_j>0}:
\begin{cor}
\label{Sec-6-Cor:Uniform with epsilon=00003D1}Let $D$ be uniform
and $\alpha$ have $\Delta^{+}$ representation. If $C(\alpha)$ is
a self-similar set generated by similarities $f_{j}(x)=r_{j}x+b_{j}$
then $\alpha$ is translation equivalent to a rational number.\end{cor}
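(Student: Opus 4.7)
The plan is to reduce the statement to Theorem \ref{Sec-6-Thm:Uniform D and r_j>0}, which handles the case $r_j>0$, by exploiting the central symmetry of $C(\alpha)$ that is available precisely because $D$ is uniform. By Remark \ref{Sec-4-Rem:Uniform D}, when $D$ is uniform we have $C(\alpha)=z-C(\alpha)$ with $z:=\sum_{j=1}^{\infty}\max\{D_{\alpha_j}\}\cdot n^{-j}$, so the reflection $\tau(x):=z-x$ satisfies $\tau(C(\alpha))=C(\alpha)$. This involution will let me swap out any negative-ratio similarity for a positive-ratio one without changing the generated set.

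Concretely, starting from $C(\alpha)=\bigcup_{j}f_{j}(C(\alpha))$ with $f_{j}(x)=r_{j}x+b_{j}$, I would define
\[
\tilde{f}_{j}:=\begin{cases} f_{j} & \text{if } r_{j}>0,\\ f_{j}\circ\tau & \text{if } r_{j}<0.\end{cases}
\]
A direct computation shows $\tilde{f}_{j}(x)=-r_{j}x+(r_{j}z+b_{j})$ in the second case, which is a similarity with positive contraction ratio $|r_{j}|$. Because $\tau(C(\alpha))=C(\alpha)$, we have $\tilde{f}_{j}(C(\alpha))=f_{j}(\tau(C(\alpha)))=f_{j}(C(\alpha))$ for every $j$, hence
\[
\bigcup_{j}\tilde{f}_{j}(C(\alpha))=\bigcup_{j}f_{j}(C(\alpha))=C(\alpha).
\]
Thus $C(\alpha)$ is the attractor of the finite contractive IFS $\{\tilde{f}_{j}\}$ whose ratios are all positive. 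Since $C(\alpha)\subseteq C\subseteq[0,1]$ (again by Remark \ref{Sec-4-Rem:Uniform D}), we have $C(\alpha)=C(\alpha)\cap[0,1]$, so Theorem \ref{Sec-6-Thm:Uniform D and r_j>0} applies with $\varepsilon=1$ to the system $\{\tilde{f}_{j}\}$, and yields that $\alpha$ is translation equivalent to a rational.

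The only delicate point is confirming that the replacement $f_{j}\mapsto f_{j}\circ\tau$ preserves the generating property; this rests entirely on the \emph{global} identity $\tau(C(\alpha))=C(\alpha)$, and this is exactly where uniformity of $D$ is indispensable (central symmetry can fail for merely sparse or regular $D$, cf.\ the non-uniform example at the end of Remark \ref{Sec-4-Rem:Uniform D}). Once this symmetry is in hand, the reduction is essentially formal and the conclusion follows immediately from Theorem \ref{Sec-6-Thm:Uniform D and r_j>0}.
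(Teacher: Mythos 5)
Your proposal is correct and follows essentially the same route as the paper: the paper's proof also uses the central symmetry $C(\alpha)=z-C(\alpha)$ from Remark \ref{Sec-4-Rem:Uniform D} to replace each negative-ratio map $f_j$ by $h_j(x):=-r_jx+(b_j+z\cdot r_j)$ (which is exactly your $f_j\circ\tau$), observes that the union over the modified maps still equals $C(\alpha)$, and then invokes Theorem \ref{Sec-6-Thm:Uniform D and r_j>0} with $\varepsilon=1$. No substantive differences.
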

\begin{proof}
It is sufficient to show that $T:=C\left(\alpha\right)$ can be generated
by a collection of similarity mappings with positive contraction ratios.
Since $T$ is centrally symmetric, then $T=z-T$ for some $z\in\left[-1,1\right]$.
For each $1\le j\le\ell$, let $h_{j}\left(x\right):=-r_{j}x+\left(b_{j}+z\cdot r_{j}\right)$
if $r_{j}<0$, otherwise let $h_{j}\left(x\right):=f_{j}\left(x\right)$.
Then
\[
\bigcup_{j=1}^{\ell}h_{j}\left(T\right)=\left(\bigcup_{r_{j}<0}f_{j}\left(z-T\right)\right)\cup\left(\bigcup_{r_{j}>0}f_{j}\left(T\right)\right)=\bigcup_{j=1}^{\ell}f_{j}\left(T\right)=T.
\]

\end{proof}
This completes the proof of Theorem \ref{Sec-1-Thm:Uniform union of self-similar sets}.
A special case of Corollary \ref{Sec-6-Cor:Uniform with epsilon=00003D1}
is proven in \cite{LYZ11} when $d_{m}=n-1$.

\subsection{\label{Sub-6.1:Uniform and strongly periodic rationals}Uniform sets
and strongly periodic rationals}

In this section we assume that $D$ is a uniform digits set. Recall,
when $D$ is uniform, a sequence $\left\{ \alpha_{k}\right\} \subseteq\Delta^{+}$
is strongly periodic if and only if there exists an integer $q>0$
such that $D_{\alpha_{j}}\subseteq D_{\alpha_{j+q}}$ for all $j>0$.
In this section we show that this is consistent with the definition
given in \cite{DHW08} and \cite{LYZ11}.
\begin{prop}
Let $D$ be uniform, $n=d_{m}+1$, and $\alpha=0._{n}\alpha_{1}\alpha_{2}\ldots$
have $\Delta$ representation. Define $\hat{\alpha}:=0._{n}\hat{\alpha}_{1}\hat{\alpha}_{2}\cdots$
such that $\hat{\alpha}_{k}:=d_{m}-\left|\alpha_{k}\right|$ for each
$k$. There exists an integer $q>0$ such that $D_{\left|\alpha_{j}\right|}\subseteq D_{\left|\alpha_{j+q}\right|}$
for all $j>0$ if and only if there exist $\boldsymbol{u},\boldsymbol{v}\in D^{p}$
for some integer $p>0$ such that $u_{j}\le v_{j}$ for all $1\le j\le p$
and $\hat{\alpha}=0._{n}u_{1}\cdots u_{p}\overline{v_{1}\cdots v_{p}}$.\end{prop}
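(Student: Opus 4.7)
The plan is to reduce the digit-set inclusion $D_{|\alpha_j|} \subseteq D_{|\alpha_{j+q}|}$ to a scalar inequality on the digits $\hat{\alpha}_j$, then read the structural representation off the resulting inequality. The key input is Remark~\ref{Sec-4-Rem:Uniform D}: under uniformity with common gap $d$, we have $D = \{0, d, 2d, \ldots, d_m\}$ and $D_{|\alpha_j|} = \{0, d, 2d, \ldots, d_m - |\alpha_j|\}$. Since $D$ is uniform, $\Delta^+ = D$, so $|\alpha_j| \in D$ and $\hat{\alpha}_j = d_m - |\alpha_j| \in D$; in particular $\hat{\alpha}_j$ is a legitimate digit of an $n$-ary expansion for $n = d_m+1$. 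The set $D_{|\alpha_j|}$ is simply the initial segment of $D$ of largest element $\hat{\alpha}_j$, so the inclusion $D_{|\alpha_j|} \subseteq D_{|\alpha_{j+q}|}$ holds if and only if $\hat{\alpha}_j \leq \hat{\alpha}_{j+q}$. Once this reduction is made the rest is bookkeeping.

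For the (\emph{if}) direction, assume $\hat{\alpha} = 0._n u_1 \cdots u_p \overline{v_1 \cdots v_p}$ with $u_j \leq v_j$ for $1 \leq j \leq p$. I would take $q := p$ and verify $\hat{\alpha}_j \leq \hat{\alpha}_{j+p}$ for all $j > 0$ by splitting into two cases: for $1 \leq j \leq p$ this reads $u_j \leq v_j$, which is assumed; for $j > p$ the pure periodicity of the tail yields $\hat{\alpha}_{j+p} = \hat{\alpha}_j$. Combining with the first paragraph gives $D_{|\alpha_j|} \subseteq D_{|\alpha_{j+p}|}$, so $q = p$ works.

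For the (\emph{only if}) direction, assume $\hat{\alpha}_j \leq \hat{\alpha}_{j+q}$ for all $j > 0$. For each residue class $i \in \{1, \ldots, q\}$, the sequence $\bigl(\hat{\alpha}_{i+kq}\bigr)_{k\geq 0}$ is monotone nondecreasing and takes values in the finite set $D$, so it stabilizes. Let $N_i$ be the least $k$ beyond which it is constant and set $N := \max_{i} N_i$; then $\hat{\alpha}_{j+q} = \hat{\alpha}_j$ for every $j > Nq$. I would set $p := (N+1)q$, which is a common multiple of $q$ and guarantees pure periodicity of $\hat{\alpha}$ with period $p$ past index $p$. Define $u_j := \hat{\alpha}_j$ and $v_j := \hat{\alpha}_{j+p}$ for $1 \leq j \leq p$. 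Chaining the hypothesis $N+1$ times along arithmetic progressions of step $q$ gives $u_j = \hat{\alpha}_j \leq \hat{\alpha}_{j+q} \leq \cdots \leq \hat{\alpha}_{j+(N+1)q} = v_j$. The choice of $p$ ensures $\hat{\alpha}_{j+2p} = \hat{\alpha}_{j+p}$ for $1 \leq j \leq p$, so $\hat{\alpha} = 0._n u_1 \cdots u_p \overline{v_1 \cdots v_p}$, completing the proof.

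There is no deep obstacle here; the only point requiring care is that the statement forces the preperiod and period of $\hat{\alpha}$ to have equal length. This is resolved by taking $p$ to be a sufficiently large multiple of $q$, rather than the minimal $q$ itself. Once the translation from set inclusions to scalar inequalities via Remark~\ref{Sec-4-Rem:Uniform D} is in place, both directions are immediate verifications.
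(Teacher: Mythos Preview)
Your proof is correct and follows the same core idea as the paper: reduce the set inclusion $D_{|\alpha_j|}\subseteq D_{|\alpha_{j+q}|}$ to the scalar inequality $\hat\alpha_j\le\hat\alpha_{j+q}$ using the explicit form $D_{|\alpha_j|}=\{0,d,\ldots,d_m-|\alpha_j|\}$ for uniform $D$. Your treatment of the ``only if'' direction---the stabilization of each residue class and the choice $p=(N+1)q$ to force equal preperiod and period lengths---is spelled out more carefully than in the paper, which compresses both directions into a single chain of equivalences and leaves the construction of $p$ from $q$ implicit.
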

\begin{proof}
Suppose $\boldsymbol{u},\boldsymbol{v}\in D^{p}$ such that $u_{j}\le v_{j}$
for all $1\le j\le p$ and $\hat{\alpha}=0._{n}u_{1}\cdots u_{p}\overline{v_{1}\cdots v_{p}}$.
Since $u_{j},v_{j}\in D$ then the $\Delta^{+}$ representation of
$\hat{\alpha}$ is unique, so that $u_{j}\le v_{j}$ for all $1\le j\le p$
is equivalent to $d_{m}-\left|\alpha_{j}\right|\le d_{m}-\left|\alpha_{j+p}\right|$
for all $j>0$ by definition of $\hat{\alpha}_{j}$. Furthermore,
$d_{m}-\left|\alpha_{j}\right|\le d_{m}-\left|\alpha_{j+p}\right|$
if and only if $D_{\left|\alpha_{j}\right|}\subseteq D_{\left|\alpha_{j+p}\right|}$
since $D_{\left|\alpha_{j}\right|}=\left\{ 0,d,\ldots,\left(d_{m}-\left|\alpha_{j}\right|\right)\right\} $
for all $\alpha_{j}\in\Delta$.
\end{proof}
Thus, when $D$ is uniform, we extend the definition of strongly periodic
to mean there exists $q>0$ such that $D_{\left|\alpha_{j}\right|}\subseteq D_{\left|\alpha_{j+q}\right|}$
for all $j>0$. We note that $\alpha$ need not be rational to satisfy
this equation, but any such $\alpha$ is translation equivalent to
a rational. We point out that if $D$ is uniform, then $F$ contains
three disjoint partitions:
\begin{enumerate}
\item According to Theorem \ref{Sec-1-Thm:Uniform union of self-similar sets},
if $\alpha\in F$ is translation equivalent to a strongly periodic
rational then $C\left(\alpha\right)\cap\left[0,\varepsilon\right]$
is a self-similar set for $\varepsilon=1$.
\item If $\alpha\in F$ is translation equivalent to a rational $\gamma$,
but not to any strongly periodic rational, then $C\left(\alpha\right)\cap\left[0,\varepsilon\right]$
is a self-similar set for some $0<\varepsilon<1$.
\item Otherwise, if $\alpha\in F$ is not translation equivalent to any
rational, then $C\left(\alpha\right)\cap\left[0,\varepsilon\right]$
is not a self-similar set for any $\varepsilon>0$.
\end{enumerate}
Example \ref{Sec-6-Ex:MTC is self-similar} illustrates a case when
$\alpha$ is a strongly periodic rational.
\begin{example}
\label{Sec-6-Ex:MTC is self-similar}Let $C=C_{3,\left\{ 0,2\right\} }$
denote the middle thirds Cantor set and $\alpha:=0._{3}02\overline{0}$.
Then $C\cap\left(C+\alpha\right)$ consists of $\mu_{\alpha}\left(2\right)=2$
disjoint copies of $\frac{1}{9}C$ by Theorem \ref{Sec-3-Thm:Rational_tau}.
Let $q=2$ so that $D_{\alpha_{j}}\subseteq D_{\alpha_{j+q}}$ for
all $j>0$ and $\alpha$ is strongly periodic. Thus, $C\cap\left(C+\alpha\right)$
is a self-similar set composed of two ``smaller'' copies of $C$.
Furthermore, the Hausdorff dimension of $C\cap\left(C+\alpha\right)$
is $s:=\log_{3}\left(2\right)$ and the Hausdorff measure is $\mathscr{H}^{s}\left(C\cap\left(C+\alpha\right)\right)=\frac{1}{2}$.
\end{example}
Example \ref{Sec-6-Ex:MTC is not self-similar} demonstrates a rational
in $F$ that is not strongly periodic.
\begin{example}
\label{Sec-6-Ex:MTC is not self-similar}Let $C=C_{3,\left\{ 0,2\right\} }$
denote the middle thirds Cantor set and $\alpha:=0._{3}02\overline{20}$.
Then $C\cap\left(C+\alpha\right)$ consists of $\mu_{\alpha}\left(2\right)=2$
disjoint copies of $\frac{1}{9}C_{9,\left\{ 6,8\right\} }$ by Theorem
\ref{Sec-3-Thm:Rational_tau}. If $s:=\log_{9}\left(2\right)$ then
$\mathscr{H}^{s}\left(C\cap\left(C+\alpha\right)\right)=4^{-s}$ according
to Proposition \ref{Sec-3-Prop:Hausdorff Measure when D=00003D{0,d}}.
However, if $q=2k$ is even then $D_{\alpha_{1}}=\left\{ 0,2\right\} $
and $D_{\alpha_{1+2k}}=\left\{ 0\right\} $. Similarly, if $q=2k+1$
then $D_{\alpha_{4}}=\left\{ 0,2\right\} $ and $D_{\alpha_{4+2k+1}}=\left\{ 0\right\} $.
Hence, $\alpha$ is not translation equivalent to any strongly periodic
rational and $C\cap\left(C+\alpha\right)$ is not a self-similar set.
\end{example}

\subsection{\label{Sec-6.2:beta expansions}$\beta$-expansion Cantor Sets}

Let $N\ge2$, $\Omega\subseteq\left\{ 0,1,\ldots,N-1\right\} $ be
an arbitrary set containing at least two elements, and $\beta\in\left(0,\frac{1}{N}\right)$.
If $\phi_{d}\left(x\right):=\beta x+d\left(1-\beta\right)/\left(N-1\right)$,
then the set generated by $\left\{ \phi_{d}\mid d\in\Omega\right\} $
is the \emph{$\beta$-expansion Cantor set} 
\[
\Gamma_{\beta,\Omega}:=\left\{ \sum_{k=1}^{\infty}\frac{x_{k}\beta^{k-1}\left(1-\beta\right)}{\left(N-1\right)}\mid x_{k}\in\Omega\right\} .
\]

According to Lemma \ref{Sec-6-Lem:g_N defined}, if there exists an
integer $d\ge1$ such that $D=d\cdot\Omega$ is a sparse digits set
and $d_{m}\le N-1$, then $\Gamma_{\beta,D}$ can be expressed as
\[
\Gamma_{\beta,D}=\frac{\left(1-\beta\right)}{\beta\left(N-1\right)}\cdot g_{\beta}\left(C_{N,D}\right)
\]
for some sparse deleted digits Cantor set $C_{N,D}$. Therefore, when
$\beta$ is small it is sufficient to consider the structure of deleted
digits Cantor sets. We point out that $g_{\beta}$ only preserves
the structure of these sets; the Hausdorff dimension and measure are
not necessarily preserved since $g_{\beta}\left(C_{3,\left\{ 0,2\right\} }\right)$
has dimension $\log_{\frac{1}{\beta}}\left(2\right)$ for any $\beta\in\left(0,\frac{1}{3}\right)$.
Our results do not necessarily hold for larger values of $\beta$
since $\Gamma_{\beta,\Omega}-\Gamma_{\beta,\Omega}$ may not satisfy
the open set condition. We refer to \cite{ZLL08} and \cite{KLD12}
for analysis of uniform $\beta$-expansion Cantor sets when $\beta>\frac{1}{d_{m}+1}$.

Many of our results support the idea that self-similarity structure
is determined by the sequence $\left\{ \alpha_{k}\right\} \mbox{\ensuremath{\subseteq\Omega}}$,
sometimes called the \emph{$\Omega$-code}. If $D$ is sparse and
$d_{m}<n$, then $F$ satisfies the open set condition and any $\Delta^{+}$
representations are unique by Lemma \ref{Sec-4-Lem:Unique Delta Representations}.
We avoid (direct) discussion of $\Omega$-codes to focus on the geometry
of $n$-ary intervals $J^{\left(h\right)}\subset C_{k}$. Lemma \ref{Sec-6-Lem:g_N defined}
directly supports the idea that self-similarity is independent of
the chosen base when $\beta$ is small.

\section*{Acknowledgement}

The co-authors thank Derong Kong for making them aware of the example
in Remark \ref{Sec-4-Remark:-periodin-not-strongly periodic}.


\begin{thebibliography}{DHW08}

\bibitem[AS99]{AySt99}
Elizabeth Ayer and Robert~S. Strichartz, \emph{Exact {H}ausdorff measure and
  intervals of maximum density for {C}antor sets}, Trans. Amer. Math. Soc.
  \textbf{351} (1999), no.~9, 3725--3741.

\bibitem[DH95]{DaHu95}
G.~J. Davis and T-Y Hu, \emph{On the structure of the intersection of two
  middle thirds {C}antor sets}, Publ. Math. \textbf{39} (1995), 43--60.

\bibitem[DHW08]{DHW08}
Guo-Tai Deng, Xing-Gang He, and Zhi-Xiong Wen, \emph{Self-similar structure on
  intersections of triadic cantor sets}, J. Math. Anal. Appl. \textbf{337}
  (2008), 617---631.

\bibitem[Fal85]{Fal85}
Kenneth.~J. Falconer, \emph{The geometry of fractal sets}, Cambridge University
  Press, Cambridge, 1985.

\bibitem[Hau19]{Hau19}
Felix Hausdorff, \emph{{D}imension und \"{a}u\ss eres {M}a\ss}, Math. Ann.
  \textbf{79} (1919), no.~1-2, 136--156.

\bibitem[Hut81]{Hut81}
John~E. Hutchinson, \emph{Fractals and self-similarity}, Indiana Univ. Math. J.
  \textbf{30} (1981), 713--747.

\bibitem[KLD11]{KLD12}
Derong Kong, Wenxia Li, and Michel Dekking, \emph{Intersections of homogenous
  {C}antor sets and beta-expansions}, Preprint (2011).

\bibitem[Kon12]{Kon12}
Derong Kong, \emph{Self similarity of generalized cantor sets}, Preprint
  (2012).

\bibitem[KP91]{KePe91}
Richard Kenyon and Yuval Peres, \emph{Intersecting random translates of
  invariant {C}antor sets}, Invent. Math. \textbf{104} (1991), 601--629.

\bibitem[Kra99]{Kr99}
Roger~L. Kraft, \emph{Random intersection of thick {C}antor sets}, Trans. Amer.
  Math. Soc. \textbf{352} (1999), no.~3, 1315--1328.

\bibitem[LYZ11]{LYZ11}
Wenxia Li, Yuanyuan Yao, and Yunxiu Zhang, \emph{Self-similar structure on
  intersection of homogeneous symmetric {C}antor sets}, Math. Nachr.
  \textbf{284} (2011), no.~2--3, 298 -- 316.

\bibitem[Mar86]{Ma86}
Jacques Marion, \emph{Mesure de {H}ausdorff d'un fractal `a similitude
  interne}, Ann. Sc. math. Qu{\'e}bec \textbf{10} (1986), no.~1, 51--81.

\bibitem[Mar87]{Ma87}
\bysame, \emph{Mesures de {H}ausdorff d'ensembles fractals}, Ann. Sc. Math.
  Qu{\'e}bec \textbf{11} (1987), 111--132.

\bibitem[MSS09]{MSS09}
Peter Mora, Karoly Simon, and Boris Solomyak, \emph{The {L}ebesgue measure of
  the algebraic difference of two random {C}antor sets}, Indag. Math.
  \textbf{20} (2009), 131--149.

\bibitem[Phi11]{Phi11}
Jason~D. Phillips, \emph{Intersections of deleted digits {C}antor sets with
  their translates}, Master's thesis, Wright State University, 2011.

\bibitem[PP11]{PePh11b}
Steen Pedersen and Jason~D. Phillips, \emph{On intersections of {C}antor sets:
  {H}ausdorff measure}, Preprint (2011).

\bibitem[PP12]{PePh11a}
\bysame, \emph{Intersections of certain deleted digits sets}, Fractals
  \textbf{20} (2012), 105--115.

\bibitem[ZLL08]{ZLL08}
Yuru Zou, Jian Lu, and Wenxia Li, \emph{Self-similar structure on the
  intersection of middle--$(1-2\beta)$ {C}antor sets with $\beta\in (1/3,
  1/2)$}, Nonlinearity \textbf{21} (2008), 2899--2910.

\end{thebibliography}
\end{document}